\newtheorem{theorem}{Theorem}
\newtheorem{corollary}{Corollary}
\newtheorem{lemma}{Lemma}
\newtheorem{definition}{Definition}
\theoremstyle{definition}
\newtheorem{remark}{Remark}
\begin{document}
\title[Paths in complex projective space]{The space of paths in complex projective space with real boundary conditions}
\author{Nancy Hingston}
\address{Department of Mathematics and Statistics, College of New Jersey, Ewing, New
Jersey 08628, USA}
\email{hingston@tcnj.edu}
\author{Alexandru Oancea}
\address{Institut de Math\'ematiques de Jussieu -- Paris Rive Gauche, UMR 7586,
Universit\'e Pierre et Marie Curie \& CNRS, 4 place Jussieu, 75252 Paris, France.}
\email{oancea@math.jussieu.fr}
\date{November 27, 2013}

\begin{abstract}
We compute the integral homology of the space of paths in $\mathbb{C} P^{n}$
with endpoints in $\mathbb{R} P^{n}$, $n\ge1$ and its algebra structure with
respect to the Pontryagin-Chas-Sullivan product with $\mathbb{Z}/2$-coefficients.
\end{abstract}
\maketitle




\section*{Introduction}

Let $\mathcal{P}_{n}=\mathcal{P}_{\mathbb{R}P^{n}}\mathbb{C}P^{n}$, $n\geq1$
be the space of continuous paths $\gamma:[0,1]\rightarrow\mathbb{C}P^{n}$ with
endpoints $\gamma(0),\gamma(1)\in\mathbb{R}P^{n}$ and denote
\[
\mathbb{H}_{\cdot}(\mathcal{P}_{n}):=H_{\cdot+n}(\mathcal{P}_{n}
;\mathbb{Z}/2).
\]
More generally, given a $\mathbb{Z}$-graded module $V_{\cdot}$ and
$k\in\mathbb{Z}$ we denote $V[k]_{\cdot}$ or $V_{\cdot}[k]$ the $\mathbb{Z}
$-graded module $V[k]_{j}=V_{j}[k]:=V_{j+k}$. Thus $\mathbb{H}_{\cdot
}(\mathcal{P}_{n})=H_{\cdot}(\mathcal{P}_{n})[n]$.

We shall refer to $\mathbb{H}_{\cdot}(\mathcal{P}_{n})$ as the \emph{path
homology} of the pair $(\mathbb{C}P^{n},\mathbb{R}P^{n})$ and use for
readability the shorthand notation $\mathbb{H}$. This is a unital graded
algebra with respect to the \emph{Pontryagin-Chas-Sullivan product}
\[
\ast:\mathbb{H}_{i}\otimes\mathbb{H}_{j}\rightarrow\mathbb{H}_{i+j}.
\]
This product is heuristically defined by intersecting cycles
transversally over the evaluation maps at the endpoints, and then
concatenating. We refer to~\S\ref{sec:PCS} below for the precise definition. 
Our grading convention is such that $\mathbb{H}_{\cdot
}$ is supported in degrees $\geq-n$. The class of a point has degree $-n$ and
the unit has degree $0$, being represented by the cycle $[\mathbb{R}P^{n}]$ of
constant loops. 

The purpose of this paper is to compute the algebra
$(\mathbb{H}_{\cdot},*)$. The result is contained in
Theorem~\ref{thm:PCSproduct}. Along the way, we shall also compute
in~\S \ref{sec:homologyPn} the integral homology groups $H_{\cdot}
(\mathcal{P}_{n};\mathbb{Z})$.

\begin{theorem}
\label{thm:PCSproduct} The path homology algebra $(\mathbb{H}_{\cdot
}(\mathcal{P}_{n}),\ast)$, $n\ge1$ with $\mathbb{Z}/2$-coefficients admits the
following presentation:

\begin{itemize}
\item if $n\equiv1$ modulo $4$, then
\[
\mathbb{H}_{\cdot}(\mathcal{P}_{n})\simeq\langle H,S,Y \rangle/ \left\{
\begin{array}
[c]{cc}
[H,S]=1, \ [H,Y]=0,\ [S,Y]=H^{n-1}Y^{2}, & \\
S^{2}=0,\ H^{n+1}=0 &
\end{array}
\right\}
\]
with $\langle H,S,Y\rangle$ being the free graded unital algebra over $\mathbb{Z}/2$
with generators $H,S,Y$ of degrees
\[
|H|=-1,\qquad|S|=1,\qquad|Y|=n.
\]

\item if $n\equiv3$ modulo $4$, then
\[
\mathbb{H}_{\cdot}(\mathcal{P}_{n})\simeq\langle H,S,Y \rangle/ \left\{
\begin{array}
[c]{cc}
[H,S]=1, \ [H,Y]=0,\ [S,Y]=0, & \\
S^{2}=0,\ H^{n+1}=0 &
\end{array}
\right\}
\]
where $\langle H,S,Y\rangle$ has the same meaning as above.

\item if $n$ is even
\[
\mathbb{H}_{\cdot}(\mathcal{P}_{n})\simeq\langle H,T,Y\rangle/ \left\{
\begin{array}
[c]{cc}
[H,T]=H, \ [H,Y]=0,\ [T,Y]=Y, & \\
T^{2}=T,\ H^{n+1}=0 &
\end{array}
\right\}
\]
with $\langle H,T,Y\rangle$ being the free graded unital algebra over $\mathbb{Z}/2$
with generators $H,T,Y$ of degrees
\[
|H|=-1, \qquad|T|=0, \qquad|Y|=n.
\]

\end{itemize}
\end{theorem}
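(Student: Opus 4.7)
My plan is to compute $\mathbb{H}_\cdot(\mathcal{P}_n)$ additively first, then identify explicit geometric representatives for the claimed generators $H$, $S$ (resp.\ $T$), and $Y$, and finally evaluate the Pontryagin-Chas-Sullivan products on these representatives via the intersection-concatenation procedure recalled in Section~\ref{sec:PCS}. For the additive step I would use the endpoint-evaluation fibration
\[
\Omega \mathbb{C}P^n \longrightarrow \mathcal{P}_n \xrightarrow{(\mathrm{ev}_0,\mathrm{ev}_1)} \mathbb{R}P^n \times \mathbb{R}P^n,
\]
obtained by pulling back the free path fibration over $\mathbb{C}P^n$ along $\mathbb{R}P^n\times\mathbb{R}P^n$, and run the mod-$2$ Serre spectral sequence. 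Its $E_2$-page is the tensor product of the $\mathbb{Z}/2$-homology of the base with that of $\Omega\mathbb{C}P^n$, the latter being determined by the path-loop fibration. The diagonal section by constant loops forces a copy of $H_*(\mathbb{R}P^n;\mathbb{Z}/2)$ to survive; together with the integral computation of $H_\cdot(\mathcal{P}_n;\mathbb{Z})$ carried out in Section~\ref{sec:homologyPn} and the universal coefficient theorem, this pins down all the differentials and solves the extension problem.

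Next I would fix explicit cycles. The class $H$ of shifted degree $-1$ is represented by the subfamily of constant loops along a hyperplane $\mathbb{R}P^{n-1}\subset\mathbb{R}P^n$. The class $Y$ of shifted degree $n$ sits inside a fiber $\Omega_p\mathbb{C}P^n$ over a point $p\in\mathbb{R}P^n$ and is represented by the generator $S^{2n}\to\Omega_p\mathbb{C}P^n$ adjoint to the Hopf fibration $S^{2n+1}\to\mathbb{C}P^n$. The class $S$ of shifted degree $1$ (resp.\ $T$ of degree $0$) corresponds in the spectral sequence to the product of a base class with the generator $b\in H_1(\Omega\mathbb{C}P^n;\mathbb{Z}/2)$ (resp.\ with the unit), and is naturally realized by an $\mathbb{R}P^n$-family of loops wrapping once around a $\mathbb{C}P^1\subset\mathbb{C}P^n$.

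With these representatives in hand, the relations can be checked one by one. The nilpotency $H^{n+1}=0$ follows from transversality, as intersecting $n+1$ hyperplanes in $\mathbb{R}P^n$ is empty. The commutator $[H,Y]=0$ follows by perturbing $Y$ so its basepoint avoids the chosen hyperplane. The self-intersection relations $S^2=0$ (resp.\ $T^2=T$) reduce to computations in the base $\mathbb{R}P^n\times\mathbb{R}P^n$. The more subtle relations are $[H,S]=1$ and $[H,T]=H$, which test how the base intersection interacts with the loop-space factor, and above all $[S,Y]=H^{n-1}Y^2$ versus $[S,Y]=0$, which is what distinguishes $n\equiv 1$ from $n\equiv 3\pmod 4$.

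I expect the verification of $[S,Y]=H^{n-1}Y^2$ when $n\equiv 1\pmod 4$ to be the main obstacle: it couples the Pontryagin product on the loop-space fiber (where $Y\ast Y$ is computed via the $H$-space structure on $\Omega\mathbb{C}P^n$) with the intersection of transverse hyperplanes on the base, and requires producing an explicit chain witnessing the relation. A workable strategy is to exploit the $S^1$-symmetry inherited by the Hopf-sphere representative of $Y$ to reduce $Y\ast Y$ to a computable self-intersection, or to transport the calculation to a Morse-theoretic model of $\mathcal{P}_n$ coming from an energy functional on broken geodesics, where the intersection-concatenation product admits an explicit chain-level description compatible with the filtration by critical-point index.
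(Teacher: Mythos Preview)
Your plan has a concrete error in the choice of representative for $Y$. If $Y$ were supported in a single fiber $\Omega_p\mathbb{C}P^n$, then both endpoint evaluations would be the constant map to $p$, and after perturbing the hyperplane representing $H$ away from $p$ the fiber product defining $H\ast Y$ (and $Y\ast H$) would be empty. Thus every monomial $H^jY^k$ with $j\ge 1$ would vanish, and the subalgebra generated by your $H,S,Y$ would be far too small to hit the additive homology you computed. In the paper the class $Y$ is the fundamental class of the $2n$-dimensional completing manifold $Y_1=SN\mathbb{R}P^n\times S^1_\pi$, whose evaluation maps \emph{surject} onto $\mathbb{R}P^n$; this surjectivity is exactly what makes the classes $H^jY^k$ nonzero and gives a basis of each level-homology block. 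Your Serre spectral sequence filtration places the correct $Y$ in the top base-filtration, not in the fiber.

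A second gap is that you have not identified the geometric source of the dichotomy $n\equiv 1$ versus $n\equiv 3\pmod 4$. In the paper this is traced to the self-intersection in $H_\cdot(ST\mathbb{R}P^n;\mathbb{Z}/2)$ of the Hopf section $\sigma:\mathbb{R}P^n\to ST\mathbb{R}P^n$, $x\mapsto Jx$; one has $[\sigma]\cap h[\sigma]=0$ precisely when a second anticommuting complex structure $J_2$ exists, i.e.\ when $n+1$ is divisible by $4$. Your description of $S$ as ``an $\mathbb{R}P^n$-family of loops wrapping once around a $\mathbb{C}P^1$'' omits the Hopf section entirely, and without it there is no visible invariant that could distinguish the two odd cases. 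The paper further reduces $[S,Y]$ to the identity $YS=\overline{S}Y$ together with the computation of $S+\overline{S}$, using the time-reversal involution $A$ and a min-max filtration argument for the norm functional; none of the ingredients you list (the $S^1$-action on the Hopf-sphere fiber class, or ``broken geodesics'') supplies this.
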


As far as we know, this is the first computation both of the homology of a
space of paths with endpoints on a non-trivial submanifold, and of the product
structure that it carries. Note that the resulting algebra is non-commutative. This is a generic expectation for Pontryagin-Chas-Sullivan algebras of path spaces - in contrast to Chas-Sullivan algebras for loop spaces (see also the section below on Motivations). We find it remarkable that the ring that we compute is as non-commutative as it is, especially given that the fundamental groups are commutative. The most surprising relation is perhaps $SY+YS=H^{n-1}Y^2$ if $n\equiv 1 (\mathrm{mod}\ 4)$, which shows that the induced product on level homology for the standard norm functional described in~\S\ref{sec:concatenation} is not commutative either. Here by ``level homology" we mean the graded object associated to the filtration of $\mathcal P_n$ by sublevel sets of the norm functional (see also~\cite[\S1.2]{Goresky-Hingston}). 

\subsection{Motivations and further questions.}

We shall restrict ourselves to a brief discussion, although our motivations
are manifold. They stem from symplectic topology and from string topology.

Let $M$ be a closed manifold and $N$ a closed submanifold. The homology
(with local coefficients) of the space $\mathcal{P}_{N} M$ coincides with the
wrapped Floer homology of the conormal bundle $T^{*}_{N}M:=\{(x,p)\in T^{*}M\,
: \, p|_{T_{x}N}=0\}$~\cite{Abbondandolo-Portaluri-Schwarz,Abouzaid-Seidel}.
(This is an exact Lagrangian submanifold in $T^{*}M$.) Although the
Pontryagin-Chas-Sullivan product that we define in this paper has not been
studied outside the fundamental case $N=pt$, this isomorphism is expected to
intertwine it with the (half-)pair-of-pants product on wrapped Floer homology,
in the spirit of~\cite{AS2}. Our computation can be understood as a
computation of the wrapped Floer homology ring of $T^*_{\mathbb{R}P^n}\mathbb{C}P^n$.

This correspondence in turn yields many questions. It is known that the
wrapped Fukaya category of the cotangent bundle $T^{*}M$ is generated by
either the zero section or by a fiber, and hence its Hochschild homology is
the homology of the free loop space~\cite{Abouzaid2011a}. It is
reasonable to expect that the Fukaya category is generated by the conormal
bundle of any given submanifold. Tobias Ekholm asked the following question: is
the Hochschild homology of the algebra $(\mathbb{H}_{\cdot},*)$ equal to the
homology of the free loop space on $\mathbb{C}P^{n}$? This is a formality
question. What is the $A_{\infty}$-algebra structure on chains on
$\mathcal{P}_{n}$? This should be tractable, since our description of
$\mathcal{P}_{n}$ via Morse theory is essentially complete (and, as Thomas Kragh
pointed out, it should even yield the homotopy theory of $\mathcal{P}_{n}$). What is the Pontryagin-Chas-Sullivan ring   $H_\cdot(\mathcal P_{\mathbb{R}P^k}\mathbb{C}P^n)$, $k<n$? 
What
other general tools for computing the algebra structure on $H_{\cdot
}(\mathcal{P}_{N}M)$ does one have? A specific question here is to establish the relative analogue of the multiplicative spectral sequence in~\cite{Cohen-Jones-Yan}. 
Can one compute the homology or the algebra structure on  $H_{\cdot}(\mathcal{P}_{N}M)$ from minimal model data on the pair $(M,N)$? What about the module structure of $H_{\cdot}(\mathcal{P}_{N}M)$ over the Chas-Sullivan ring $H_\cdot(\mathcal L M)$, where $\mathcal LM$ is the free loop space of $M$?

Another line of motivation stems from ``quantum string topology'', which is an
ongoing project of the second author~\cite{Oancea-Montreal}. From this
perspective, the key point is that $\mathbb{R}P^{n}$ is a monotone Lagrangian
inside $\mathbb{C}P^{n}$, and the Pontryagin-Chas-Sullivan product that we
study in this paper can be further deformed using holomorphic discs with
boundary on $\mathbb{R}P^{n}$.

More generally, we believe that it will be important to incorporate path
spaces $\mathcal{P}_{N} M$ into string topology in a systematic way. The only
attempt in this direction that we are aware of
is~\cite{Blumberg-Cohen-Teleman}.

Besides all the above, the thing that kept us going to the end of the
computations was the sheer beauty of the geometry and of the structures that emerged.

\subsection{The Pontryagin-Chas-Sullivan product}
\label{sec:PCS} Our definition of the product is a variation of the 
definitions implemented in~\cite{Chataur,Cohen-Jones} for the Chas-Sullivan product. 
We place
ourselves in the context of paths of Sobolev class $W^{1,2}$, so that
$\mathcal{P}_{n}$ is a Hilbert manifold. This does not result in any loss of
generality since the groups $\mathbb{H}_{i}$ are invariant under strengthening
the regularity scale of the paths under consideration. Let $\mathrm{ev}
_{t}:\mathcal{P}_{n}\rightarrow\mathbb{R}P^{n}$, $t=0,1$ be the evaluation
maps at the endpoints given by $\mathrm{ev}_{t}(\gamma):=\gamma(t)$. The maps
$\mathrm{ev}_{t}$, $t=0,1$ are submersions and the fiber product
\[
\mathcal{C}_{n}:=\mathcal{P}_{n}\ {_{\mathrm{ev}_{1}}}\!\!\times
_{\mathrm{ev}_{0}}\mathcal{P}_{n}:=\{(\gamma,\delta)\in\mathcal{P}_{n}
\times\mathcal{P}_{n}\ :\ \mathrm{ev}_{1}(\gamma)=\mathrm{ev}_{0}(\delta)\}
\]
is a codimension $n$ Hilbert submanifold of $\mathcal{P}_{n}\times
\mathcal{P}_{n}$. We denote the inclusion
\[
s:\mathcal{C}_{n}\hookrightarrow\mathcal{P}_{n}\times\mathcal{P}_{n}.
\]
We also consider the \emph{concatenation map (at time $t=1/2$)}
\[
c:\mathcal{C}_{n}\hookrightarrow\mathcal{P}_{n}
\]
defined by $c(\gamma,\delta)(t):=\gamma(2t)$ for $0\leq t\leq1/2$ and
$c(\gamma,\delta)(t):=\delta(2t-1)$ for $1/2\leq t\leq1$. Note that $c$ is a
codimension $n$ embedding and its image is the set $\{\gamma\in\mathcal{P}
_{n}\,:\,\gamma(\frac{1}{2})\in\mathbb{R}P^{n}\}$. We thus have a diagram
\[
\mathcal{P}_{n}\overset{c}{\longleftarrow}\mathcal{C}_{n}\overset
{s}{\longrightarrow}\mathcal{P}_{n}\times\mathcal{P}_{n}
\]
Restricting to $\mathbb{Z}/2$-coefficients, we obtain the following diagram in
homology
\[
\mathbb{H}_{i+j}\overset{c_{\ast}}{\longleftarrow}H_{i+j+n}(\mathcal{C}
_{n})\overset{s_{!}}{\longleftarrow}H_{i+j+2n}(\mathcal{P}_{n}\times
\mathcal{P}_{n})\overset{AW}{\longleftarrow}\mathbb{H}_{i}\otimes
\mathbb{H}_{j}.
\]
Here $AW$ denotes the Alexander-Whitney shuffle product~\cite[p.~268]
{Greenberg-Harper} and the map $c_{*}$ is the map induced in homology by
concatenation. As for the \emph{shriek map} $s_{!}$, this is defined as the
composition
\[
H_{\cdot}(\mathcal{P}_{n}\times\mathcal{P}_{n})\to H_{\cdot}(\mathcal{P}
_{n}\times\mathcal{P}_{n},\mathcal{P}_{n}\times\mathcal{P}_{n}\setminus
\mathcal{C}_{n})\to H_{\cdot}(\nu(\mathcal{C}_{n}),\nu(\mathcal{C}
_{n})\setminus\mathcal{C}_{n})\overset{\tau^{-1}}{\longrightarrow}H_{\cdot
-n}(\mathcal{C}_{n}).
\]
Here the first map is induced by inclusion, the second map is given by
excision after having identified the normal bundle $\nu(\mathcal{C}_{n})$ to
$\mathcal{C}_{n}$ inside $\mathcal{P}_{n}\times\mathcal{P}_{n}$ to a tubular
neighborhood of $\mathcal{C}_{n}$, and $\tau^{-1}$ is the (inverse of) the
Thom isomorphism.

The Pontryagin-Chas-Sullivan product is defined to be
\[
\ast:\mathbb{H}_{\cdot}\otimes\mathbb{H}_{\cdot}\rightarrow\mathbb{H}_{\cdot
},\qquad\alpha\ast\beta:=c_{\ast}s_{!}AW(\alpha\otimes\beta).
\]

\begin{remark}
[On the Chas-Sullivan and Pontryagin products]This definition holds more
generally for the space $\mathcal{P}_{N} M$ of paths inside a manifold $M$
with endpoints on a submanifold $N$. As such, it generalizes the definition of
the Chas-Sullivan product: by ``folding up" at time $1/2$, free loops inside a
manifold $P$ can be viewed as paths inside $P\times P$ with endpoints on the
diagonal $\Delta\subset P\times P$, and the Chas-Sullivan product can be
expressed in terms of the Pontryagin-Chas-Sullivan product for $\mathcal{P}
_{\Delta}(P\times P)$.

This definition also generalizes that of the Pontryagin product on the based
loop space $\Omega M=\mathcal{P}_{pt} M$. This is our motivation for the
choice of terminology ``Pontryagin-Chas-Sullivan product".
\end{remark}

\begin{remark}
[On orientability and integral coefficients]The key ingredient in the
definition of the product is the shriek map $s_{!}$, which in turn relies on
the Thom isomorphism
\[
H_{\cdot}(\nu(\mathcal{C}_{n}),\nu(\mathcal{C}_{n})\setminus\mathcal{C}
_{n})\simeq H_{\cdot-n}(\mathcal{C}_{n}).
\]
The isomorphism holds as such with $\mathbb{Z}/2$-coefficients, and it holds
with $\mathbb{Z}$-coefficients if the normal bundle $\nu(\mathcal{C}_{n})$ is
orientable. In case the normal bundle is not orientable, the isomorphism only
holds under the form
\[
H_{\cdot}(\nu(\mathcal{C}_{n}),\nu(\mathcal{C}_{n})\setminus\mathcal{C}
_{n})\simeq H_{\cdot-n}(\mathcal{C}_{n};o_{\nu(\mathcal{C}_{n})}),
\]
where $o_{\nu(\mathcal{C}_{n})}$ is the local system of orientations for
$\nu(\mathcal{C}_{n})$. In our situation $\mathcal{C}_{n}=(\mathrm{ev}
_{1},\mathrm{ev}_{0})^{-1}(\Delta)$, with $(\mathrm{ev}_{1},\mathrm{ev}
_{0}):\mathcal{P}_{n}\times\mathcal{P}_{n}\to\mathbb{R}P^{n}\times
\mathbb{R}P^{n}$, and we have
\[
\nu(\mathcal{C}_{n})=(\mathrm{ev}_{1},\mathrm{ev}_{0})^{*}\nu_{\mathbb{R}
P^{n}\times\mathbb{R}P^{n}}\Delta.
\]
Using that $\nu_{\mathbb{R}P^{n}\times\mathbb{R}P^{n}}\Delta\simeq
T\mathbb{R}P^{n}$ we obtain that $\nu(\mathcal{C}_{n})$ is orientable if and
only if $n$ is odd. Moreover, a straightforward computation shows that the
Pontryagin-Chas-Sullivan product is defined with integral coefficients on $H_{\cdot
}(\mathcal{P}_{n}; o_{\mathrm{ev}_{0}^{*}T\mathbb{R}P^{n}})$. For the more
general case of the space $\mathcal{P}_{N}M$, the Pontryagin-Chas-Sullivan
product is defined with integral coefficients on $H_{\cdot}(\mathcal{P}_{N}M;
o_{\mathrm{ev}_{0}^{*}TN})$.
\end{remark}

\subsection{Convention for concatenation.}

\label{sec:concatenation}

In the above discussion we have used the concatenation map $c:\mathcal{C}
_{n}\rightarrow\mathcal{P}_{n}$ at time $t=1/2$. This is of course not
associative. However, within the setup of paths of Sobolev class $W^{1,2}$
there is a very elegant way to obtain associativity by using what we will call
the \emph{energy minimizing concatenation map}
\[
c_{\min}:\mathcal{C}_{n}\rightarrow\mathcal{P}_{n}.
\]
This map appeared for the first time in~\cite[Lemma~2.4, see also~\S 10.6]
{Goresky-Hingston} and is closely related to the classical \emph{energy
functional}
\[
E:\mathcal{P}_{n}\rightarrow\mathbb{R}_{+},\qquad E(\gamma):=\int_{0}
^{1}|\gamma^{\prime}|^{2},
\]
and even more to what we call the \emph{($L^{2}$-) norm} functional
\[
F:\mathcal{P}_{n}\rightarrow\mathbb{R}_{+},\qquad F(\gamma):=\left(  \int
_{0}^{1}|\gamma^{\prime}|^{2}\right)  ^{1/2}=\Vert\gamma^{\prime}\Vert_{L^{2}
}.
\]
The map $c_{\min}$ is defined by
\[
c_{\min}(\gamma,\delta)(t):=\left\{
\begin{array}
[c]{c}
\gamma(\frac{t}{s})\text{ \ if }0\leq t\leq s\\
\delta(\frac{t-s}{1-s})\text{ if }s\leq t\leq1
\end{array}
\right\}
\]
where
\[
s=s_{\min}:=\frac{F(\gamma)}{F(\gamma)+F(\delta)}.
\]
The key point is that the concatenation product $c_{\min}$ produces the unique
piecewise linear concatenation of minimum energy. This implies in turn
associativity
\[
c_{\min}(c_{\min}(\alpha,\beta),\gamma)=c_{\min}(\alpha,c_{\min}(\beta
,\gamma)).
\]
This can of course also be checked directly, and we do encourage the reader to
perform this surprising calculation. The map $c_{\min}$ is homotopic to the
concatenation $c$ and the Pontryagin-Chas-Sullivan product can therefore
alternatively be defined using $c_{\min}$.

The map $c_{\min}$ and the norm $F$ satisfy the following remarkable
and useful identity:
\[
F(c_{\min}(\gamma,\delta))=F(\gamma)+F(\delta).
\]
This is the key to Lemma~\ref{lem:Crit} in~\S \ref{sec:gen}, which relates the
Pontryagin-Chas-Sullivan product and min-max critical values of the functional
$F$. We would like to emphasize that, from the joint point of view of
variational calculus and of product structures on $H_{\cdot}(\mathcal{P}_{n}
)$, the norm $F$ plays a more fundamental role than the energy $E$, although
they have the same Morse theory (same critical points, index, and nullity).

\medskip 

\noindent\textbf{Convention.} We shall deal in the sequel with concatenations
of arbitrary many paths, and we shall always perform the concatenation using
the map $c_{\min}$.

\subsection{Structure of the paper.}

The paper contains three sections whose names are self-explanatory: Geometry,
Homology, and Product. Our main tool is Morse theory for the norm functional
$F$ for the Fubini-Study metric on $\mathbb{C}P^{n}$, and the key point is the
existence of completing manifolds in the sense of
Definition~\ref{def:completing}.

\noindent\textbf{Acknowledgements.} Both authors would like to acknowledge the
hospitality of and inspiring working conditions at the Institute for Advanced
Study in Princeton during the Summer 2013, when the main work was done. A.O.
was partially supported by ERC Starting Grant StG-259118-Stein.


\section{The geometry of $\mathcal{P}_{n}$}

In all subsequent sections we work with paths of Sobolev class $W^{1,2}$ on
the space $\mathbb{C}P^{n}$ equipped with the Fubini-Study metric. A
traditional approach to path spaces (\cite{Bott-Lectures_Morse_theory},
\cite{Milnor-Morse_theory}) that works well with the finite dimensional
approximation of Morse is the Morse theory for the \emph{energy} functional
\[
E:\mathcal{P}_{n}\rightarrow\mathbb{R},\text{ \ \ \ \ }E(\gamma):=\int_{0}
^{1}|\gamma^{\prime}(t)|^{2}dt.
\]
Following~\cite{Goresky-Hingston} we will use instead the ($L^{2}$-)\textit{
norm}
\[
F:\mathcal{P}_{n}\rightarrow\mathbb{R},\text{ \ \ \ \ }F(\gamma):=\sqrt
{E(\gamma)}=\left(  \int_{0}^{1}|\gamma^{\prime}(t)|^{2}dt\right)  ^{\frac
{1}{2}}=\|\gamma^{\prime}\|_{L^{2}}.
\]
Note that
\[
F(\gamma)\geq length(\gamma),
\]
with equality if and only if $\gamma$ is parameterized proportional to
arclength; it is a good approximation to the truth to think of $F$ as the
length. The norm $F$ is not differentiable on the constant paths; we consider
them critical points of $F$. The functions $E$ and $F$ have (by a simple
computation) the same critical points, with the same index and nullity and
thus produce the same Morse theory. However the norm $F$ behaves better with
respect to concatenation of paths, as explained in~\S \ref{sec:concatenation}.
The path homology algebra of the pair $(\mathbb{C}P^{n},\mathbb{R}P^{n})$ is
of course independent of the metric; it is standard procedure to use Morse
theory with a special function having simple critical points to compute
topological data.

The critical points of $F$ are the geodesics that are perpendicular to
$\mathbb{R}P^{n}$ at both ends. Geodesics of the Fubini-Study metric have the
following simple description~\cite[Proposition~3.32]{Besse}: given a point $z\in\mathbb{C}P^{n}$
and a unit tangent vector $v\in T_{z}$ $\mathbb{C}P^{n}$, the geodesic
$\gamma_{z,v}(s):=\exp_{z}(sv)$, $s\in\mathbb{R}$ is periodic of period $\pi$
and is a great circle parametrized by arc-length on the unique complex line
$\mathcal{\ell}_{z,v}$ ($\simeq\mathbb{C}P^{1}$) passing through $z$ and
tangent to $v$. Note that the Fubini-Study metric on $\mathbb{C}P^{n}$ is such
that complex lines are round spheres of curvature $4$ and circumference $\pi$.
In particular, if $x\in\mathbb{R}P^{n}$ and $v\in SN\mathbb{R}P^{n}$ (the unit
normal bundle of $\mathbb{R}P^{n}$ in $\mathbb{C}P^{n}$), so that the geodesic
$\gamma_{x,v}$ starts on $\mathbb{R}P^{n}$ in an orthogonal direction, it will
meet (orthogonally) again $\mathbb{R}P^{n}$ at times $s=k\pi/2$, $k\geq1$. If
$k$ is even then $\gamma_{x,v}(k\pi/2)=x$ and $\gamma_{x,v}^{\prime}
(k\pi/2)=v$, whereas if $k$ is odd then $\gamma_{x,v}(k\pi/2)=x^{\ast}$, the
antipode of $x$ in $\mathcal{\ell}_{x,v}$, and $\gamma_{x,v}^{\prime}
(k\pi/2)=v^{\ast}$, where $v^{\ast}$ is the image of $v$ under the antipodal
map on $\mathcal{\ell}_{x,v}$. The point $x^{\ast}$ can be alternatively
described as the cutpoint of $x$ inside $\mathbb{R}P^{n}$ in the direction
$-Iv\in T_{x}\mathbb{R}P^{n}$, where $I:T_{x}\mathbb{C} P^{n}\rightarrow
T_{x}\mathbb{C}P^{n}$ is the complex structure. Yet another description of
$x^{\ast}$ is the following: the complex line $\mathcal{\ell}_{x,v}$
intersects $\mathbb{R}P^{n}$ along its equator $\mathbb{R} \mathcal{\ell
}_{x,v}$, which is the unique real projective line in $\mathbb{R}P^{n}$
passing through $x$ in the direction $Iv$. The geodesic $\gamma_{x,-Iv}$ is a
parametrization of $\mathbb{R}\mathcal{\ell}_{x,v}$ by arc-length and the
cutpoint is $\gamma_{x,-Iv}(\pi/2)=x^{\ast}$. In fact, the cut-locus of $x$ in
$\mathbb{R}P^{n}$ is a real hyperplane whose intersection with $\mathbb{R}
\mathcal{\ell}_{x,v}$ is the point $x^{\ast}$. \begin{figure}[ptb]
\begin{center}
\input{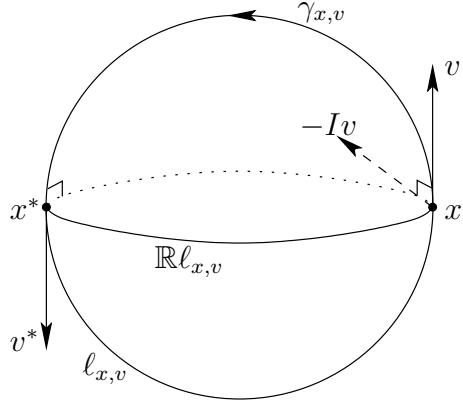}
\end{center}
\caption{Geodesics in $\mathbb{C}P^{n}$.}
\label{fig:geodesics}
\end{figure}

It follows from this discussion that the critical set of $F$ is a disjoint
union of manifolds $K_{k}$, $k\geq0$ that consist respectively of geodesics of
length $k\pi/2$ starting (and ending) on $\mathbb{R}P^{n}$ perpendicularly. If
$k=0$ then $K_{0}=\mathbb{R}P^{n}$, the space of constant paths. If $k\geq1$
there is a natural identification of $K_{k}$ with $SN\mathbb{R}P^{n}$ given
by
\[
\varphi_{k}:SN\mathbb{R}P^{n}\rightarrow K_{k},\text{ \ \ \ \ }\varphi
_{k}(x,v):=\gamma_{x,v}|_{\left[  0,k\pi/2\right]  }.
\]
Of course, the unit normal bundle $SN\mathbb{R}P^{n}$ is isomorphic to the
unit tangent bundle of $\mathbb{R}P^{n}$, denoted $ST\mathbb{R}P^{n}$, via
\[
SN\mathbb{R}P^{n}\overset{\simeq}{\longrightarrow}ST\mathbb{R}P^{n}
,\qquad(x,v)\mapsto(x,-Iv).
\]

Recall that, given a function $f:X\rightarrow\mathbb{R}$ of class $C^{2}$ on a
Hilbert manifold $X$ and a submanifold $K$ consisting of critical points of
$f$, we say that $K$ is a \textit{Morse-Bott critical manifold of index
}$\iota(K)$ \textit{and nullity} $\eta(K)$ if at each point $p\in K$ the
second derivative $H_{f}(p)$ is nondegenerate on the normal bundle to $K$ in
$X$, and if $H_{f}(p)$ has index $\iota(K)$ and nullity $\eta(K)$. (By the nullity $\eta(K)$ we mean the dimension of a maximal null subspace of the tangent space to $X$ at a point in $K$.) Note that the first condition is equivalent to requiring the tangent space $T_{p}K$ to
coincide with the null-space of $H_{f}(p)$.

\begin{lemma}
\label{lem:index+nullity} The manifolds $K_{k}$, $k\geq0$ are Morse-Bott.
Their index and nullity are respectively given by
\[
\iota(K_{0})=0,\text{ \ \ \ \ }\eta(K_{0})=n
\]
and, if $k\geq1$,
\[
\iota(K_{k})=1+(k-1)n,\text{ \ \ \ \ }\eta(K_{k})=2n-1.
\]
\end{lemma}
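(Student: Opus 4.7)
The plan is to work with the smooth energy functional $E$ rather than $F$, which produces the same Morse theory (identical critical points, indices, and nullities, as recalled in the preceding paragraph). For $K_0=\mathbb{R}P^n$, the Hessian of $E$ at a constant path $\gamma_x$ is $\mathrm{Hess}(X,X)=\int_0^1|X'|^2\,dt$ on variations $X$ with $X(0),X(1)\in T_x\mathbb{R}P^n$. This form is positive semidefinite, and its null space consists precisely of the constant $T_x\mathbb{R}P^n$-valued variations. This gives $\iota(K_0)=0$ and $\eta(K_0)=n=\dim K_0$, and establishes the Morse-Bott property for $K_0$.

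For $k\ge 1$, fix a critical geodesic $\gamma=\gamma_{x,v}|_{[0,k\pi/2]}$. Since $\mathbb{R}P^n$ is totally geodesic, the second variation formula carries no shape-operator boundary term, and equals
\[
\mathrm{Hess}(X,X)=\int_0^{k\pi/2}|X'|^2-\langle R(X,\gamma')\gamma',X\rangle\,ds
\]
on variations $X$ with $X(0),X(k\pi/2)\in T\mathbb{R}P^n$. I would introduce a parallel orthonormal frame $E_1=\gamma'$, $E_2=I\gamma'$, $E_3,\ldots,E_{2n}$ along $\gamma$ adapted to the curvature: $R(E_i,\gamma')\gamma'=\lambda_i E_i$ with $\lambda_1=0$, $\lambda_2=4$ (holomorphic sectional curvature), and $\lambda_i=1$ for $i\ge 3$ (real sectional curvature), the latter vectors spanning the complex orthogonal complement $(T\ell_{x,v})^\perp$. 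Writing $X=\sum_i f_i(s)E_i(s)$, the Hessian uncouples into $2n$ one-dimensional scalar problems $\int_0^{k\pi/2}(f_i')^2-\lambda_i f_i^2\,ds$, each with boundary conditions determined by whether $E_i$ lies in $T\mathbb{R}P^n$ at a given endpoint (free $f_i$, Neumann-type) or in $N\mathbb{R}P^n$ (forced $f_i=0$, Dirichlet-type).

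The key step, and the main obstacle, is to pin down these boundary conditions for the sectors $i\ge 3$, which requires analyzing the parallel transport along $\gamma$ of $(T\ell_{x,v})^\perp$. I would show that parallel transport acts $\mathbb{C}$-linearly on $(T\ell)^\perp$ (because $I$ is parallel), and that $P_{k\pi/2}$ equals multiplication by $\pm i^k$. Two families of isometries of $\mathbb{C}P^n$ fixing $\gamma$ pointwise drive the argument: first, the holomorphic $U(n-1)$-subgroup acting as identity on $\ell_{x,v}$ and as the standard representation on $(T\ell)^\perp$, whose equivariance together with Schur's lemma forces $P_{\pi/2}$ to be a unitary scalar $\lambda\in U(1)$; second, an antiholomorphic involution fixing $\gamma$ pointwise (e.g., in suitable homogeneous coordinates, $[z_0:z_1:z_2:\ldots:z_n]\mapsto[\bar z_0:-\bar z_1:\bar z_2:\ldots:\bar z_n]$), whose equivariance yields $\lambda+\bar\lambda=0$ and hence $\lambda=\pm i$. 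As a consequence, for even $k$ the map $P_{k\pi/2}$ preserves the real form $T\mathbb{R}P^n\cap(T\ell)^\perp$, while for odd $k$ it swaps it with $N\mathbb{R}P^n\cap(T\ell)^\perp$. Choosing a parallel orthonormal frame of $(T\ell_{x,v})^\perp$ with $n-1$ vectors starting in $T\mathbb{R}P^n$ and $n-1$ starting in $N\mathbb{R}P^n$ identifies each of the $2(n-1)$ scalar problems as Neumann-Neumann, Dirichlet-Dirichlet, or a mixed type, according to the parity of $k$.

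A direct count of eigenvalues then finishes the proof. The $\gamma'$-sector ($\lambda_1=0$, Dirichlet-Dirichlet) contributes nothing. The $I\gamma'$-sector ($\lambda_2=4$, Neumann-Neumann) has spectrum $(2j/k)^2-4$, $j\ge 0$, giving $k$ negative and $1$ zero eigenvalues. Each of the $n-1$ pairs (one "real" plus one "imaginary" sector) in $(T\ell)^\perp$ ($\lambda_i=1$) has the spectra $(2j/k)^2-1$ and $((2j-1)/k)^2-1$ split between the two sectors, with the distribution depending on the parity of $k$; a straightforward check shows that in either parity each pair contributes $k-1$ negative and $2$ zero eigenvalues. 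Summing yields $\iota(K_k)=k+(n-1)(k-1)=1+(k-1)n$ and $\eta(K_k)=1+2(n-1)=2n-1$, and the Morse-Bott property follows by identifying the explicit null Jacobi fields $a\cos(2s)I\gamma'(s)$, $a_i\cos(s)W_i(s)$, $d_i\sin(s)U_i(s)$ with the tangent vectors to $K_k\simeq SN\mathbb{R}P^n$ coming from varying the base point $x\in\mathbb{R}P^n$ and the unit normal $v\in SN_x\mathbb{R}P^n$.
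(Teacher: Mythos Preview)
Your approach is correct and takes a genuinely different route from the paper's. The paper argues softly: the nullity $\eta(K_k)=2n-1$ is obtained (and only sketched) by identifying the null space with Jacobi fields whose covariant derivative at the endpoints is normal to $\mathbb{R}P^n$; once $\eta(K_k)=\dim K_k$ is known, the lower bound $\iota(K_k)\ge 1+(k-1)n$ follows from the existence of the $(k+1)n$-dimensional manifold $Y_k$ of concatenated vertical half-circles, on which $F\le k\pi/2$ with equality exactly on $K_k$, so that the Hessian is $\le 0$ on the normal directions to $K_k$ inside $Y_k$; the matching upper bound is obtained by comparison with the free loop space $\mathcal L\mathbb{C}P^n$ --- for $k$ odd by closing up each path with one extra vertical half-circle to a closed geodesic of multiplicity $(k+1)/2$, and for $k$ even via a chain of index inequalities through $\mathcal P_{*,\mathbb{R}P^n}\mathbb{C}P^n$ and $\Omega\mathbb{C}P^n$ --- and then invoking Ziller's formula $\iota(L_m)=1+(m-1)2n$ for closed geodesics. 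Your direct diagonalization of the Hessian, with the pivotal step being the identification of parallel transport on $(T\ell_{x,v})^\perp$ over a half-period as multiplication by $\pm i$ (via Schur's lemma for the holomorphic $U(n-1)$-action together with the antiholomorphic involution fixing $\gamma$ pointwise), is self-contained, produces the full spectrum and the explicit null Jacobi fields, and is essentially the alternative that the paper alludes to in Remark~\ref{rmk:endmanifold} but does not carry out. The paper's route has the structural advantage that the manifolds $Y_k$ reappear as the central tool for computing $H_\cdot(\mathcal P_n)$ in~\S\ref{sec:homologyPn}, so their introduction here is an investment rather than a detour; your route, on the other hand, avoids any appeal to external index computations on loop spaces.
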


The proof is based on a space of ``half-circles'' in $\mathbb{C}P^{n}$ that captures the Morse theory of the norm functional on the space $\mathcal{P}_{n}$.

\medskip 

\textbf{Circles and Half-circles in }$\mathbb{C}P^{n}$

We begin with several descriptions of the space of ``vertical circles'' in
$\mathbb{C}P^{n}$. Given $x\in\mathbb{R}P^{n}$ and $v\in SN\mathbb{R}P^{n}$,
non-constant and unparametrized circles on the complex line $\mathcal{\ell
}_{x,v}$ form a well-defined conformally invariant class. (The authors could
not resist pointing out that circles on $\mathbb{C}P^{1}$ (including constant
ones) are precisely the images of great circles on the round $3$-sphere via
the Hopf map. Similarly, circles on $\mathbb{C}P^{n}$, defined as circles on
the complex lines $\ell_{p,v}$, are precisely the images of great circles on
the round $2n+1$-sphere via the Hopf map.) As such, if $x^{\prime}
\in\mathbb{R} \mathcal{\ell}_{x,v}$, $x^{\prime}\neq x$, there is a unique
circle through $x$, tangent to $v$ and passing through $x^{\prime}$. Note this
circle is orthogonal to $\mathbb{R} \mathcal{\ell}_{x,v}$ at $x$ and at
$x^{\prime}$ (thus "vertical"), and can also be found as follows: if
$x^{\prime},x\in\mathbb{R}P^{n}$ with $x^{\prime}\neq x$, then there is a
unique complex line containing them, and a unique circle containing $x$ and
$x^{\prime}$ and meeting $\mathbb{R}P^{n}$ orthogonally, the unique
\textit{vertical} circle containing $x$ and $x^{\prime}$.

Yet another description of the vertical circle containing $x$ and $x^{\prime}
$: embed $\ell_{x,v}$ isometrically inside $\mathbb{R}^{3}= \mathbb{R}
^{2}\times\mathbb{R}$ as the sphere of radius $1/2$ with equator
$\mathbb{R}\ell_{x,v}=\ell_{x,v}\cap(\mathbb{R}^{2}\times\{0\})$ and
$v\in\{0\}\times\mathbb{R}^{+}$. Then $x$ and $x^{\prime}$ lie on the equator,
and the vertical circle is the intersection of the sphere $\ell_{x,v}$ with
the unique vertical plane containing $x$ and $x^{\prime}$.

For fixed $x$ and $v$, $\mathbb{R}\ell_{x,v}$ is naturally identified with
$S_{\pi}^{1}:=\mathbb{R}/\pi\mathbb{Z}$ via
\[
x^{\prime}=\exp_{x}(-\theta Iv),
\]
$\theta\in S_{\pi}^{1}$ . (In particular, the antipode $x^{\ast}$ of $x$ on
$\ell_{x,v}$ corresponds to $\theta=\frac{\pi}{2}$. The sign of $\theta$ is
chosen so that the equator $\{x^{\prime}\}$ is parameterized counterclockwise
from above.) The principal $S_{\pi}^{1}$-bundle over $SN\mathbb{R}P^{n}$ with
fiber $\mathbb{R}\ell_{x,v}$ is therefore identified with the product
\[
\{(x,v,\theta)\}=SN\mathbb{R}P^{n}\times S_{\pi}^{1}.
\]
The parameters $x,v,\theta$ also determine a unique \textit{vertical
half-circle}, denoted $C_{x,v,\theta}$; the half circle $C_{x,v,\theta}$ is
defined to be the intersection of the \textit{upper} hemisphere in $\ell
_{x,v}$ (as just embedded in $\mathbb{R}^{3}$) with the unique vertical plane
containing $x$ and $x^{\prime}$. We parametrize the half-circle $C_{x,v,\theta
}$ on the interval $[0,1]$ with constant speed so that it begins at $x$ and
ends at $x^{\prime}$. If $x^{\prime}=x$ we define $C_{x,v,\theta}$ to be the
constant circle at $x$ and we think of it as having the tangent vector $v$
attached to it. Thus we have a map
\[
C:SN\mathbb{R}P^{n}\times S_{\pi}^{1}\rightarrow\mathcal{P}_{n},\qquad
(x,v,\theta)\mapsto C_{x,v,\theta}.
\]
\begin{figure}[ptb]
\begin{center}
\input{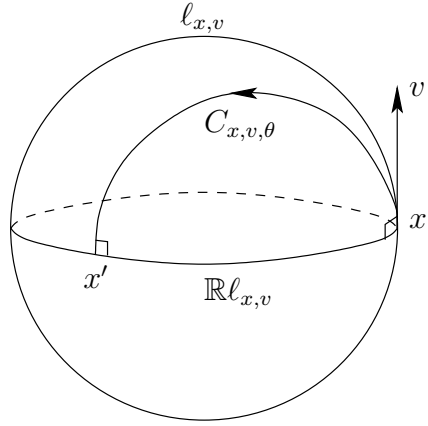}
\end{center}
\caption{Vertical half-circle in $\mathbb{C}P^{n}$.}
\label{fig:half-circles}
\end{figure}

We define
\[
Y_{1}:=SN\mathbb{R}P^{n}\times S_{\pi}^{1}=\{(x,v,\theta)\},
\]
which we think of as being the space of vertical half-circles as above. We
have smooth evaluation maps $\mathrm{ev}_{0}$, $\mathrm{ev}_{1}:Y_{1}
\rightarrow\mathbb{R}P^{n}$ defined by
\[
\mathrm{ev}_{0}(p,v,\theta):=C_{x,v,\theta}(0)=x,\qquad\mathrm{ev}
_{1}(x,v,\theta):=C_{x,v,\theta}(1)=x^{\prime}.
\]
Both evaluation maps are submersions. We define
\[
Y_{k}:={Y_{1}}\ _{\mathrm{ev}_{1}}\!\!\times_{\mathrm{ev}_{0}}{Y_{1}
}\ _{\mathrm{ev}_{1}}\!\!\times\dots\times_{\mathrm{ev}_{0}}{Y_{1}},
\]
where the number of factors in the fiber product is equal to $k\geq1$. We
think of this as being the space of paths $[0,k]\rightarrow\mathbb{C}P^{n}$
whose restriction to each interval $[j,j+1]$, $j\in\{0,\dots,k-1\}$ is a
vertical half-circle $C_{x_{j},v_{j},\theta_{j}}$. These half-circles have
matching endpoints and, if any of them is constant, it also has a unit normal
vector $v_{j}\in SN\mathbb{R}P^{n}$ attached to it. We denote $x_{k}$ the
endpoint of $C_{x_{k-1},v_{k-1},\theta_{k-1}}$. Note that two distinct points
$x,y\in\mathbb{R}P^{n}$ determine a unique complex line $\ell_{x,y}
\subset\mathbb{C}P^{n}$, a unique vertical circle on $\ell_{x,y}$ passing
through $x$ and $y$, and thus \emph{two} (unparametrized) vertical
half-circles with endpoints $x$ and $y$. As a consequence, the manifold
$Y_{k}$ is naturally parametrized near an element with $x_{j}$ is distinct
from $x_{j+1}$ for all $j\in\{0,\dots k-1\}$ by the sequence $(x_{0}
,x_{1},\dots,x_{k})\in(\mathbb{R}P^{n})^{k+1}$ In particular the open subset
of paths in $Y_{k}$ with adjacent $x_{j}$ distinct \textit{embeds} in
$\mathcal{P}_{n}$ and is locally diffeomorphic to an open set in
$(\mathbb{R}P^{n})^{k+1}$ . This is consistent with the fact that
\[
\dim\,Y_{k}=(k+1)n.
\]

The spaces $Y_{k}$, $k\ge1$ have the following important features: \renewcommand{\theenumi}{\roman{enumi}}

\begin{enumerate}
\item they naturally ``evaluate'' into $\mathcal{P}_{n}$ via the map
\begin{equation}
\label{eq:varphik}\varphi_{k}:Y_{k}\to\mathcal{P}_{n}
\end{equation}
defined by concatenating the vertical half-circles that constitute an element
of $Y_{k}$. More precisely, we have
\[
\varphi_{k}\big( (x_{0},v_{0},\theta_{0}),\dots,(x_{k-1},v_{k-1},\theta
_{k-1})\big):= C_{x_{0},v_{0},\theta_{0}}\cdot C_{x_{1},v_{1},\theta_{1}}
\cdot\ldots\cdot C_{x_{k-1},v_{k-1},\theta_{k-1}}.
\]

\item the critical set $K_{k}$ naturally embeds into $Y_{k}$ with codimension
$1+(k-1)n=\iota(K_{k})$ via the map
\begin{equation}
\label{eq:KkYk}\gamma_{x,v}|_{[0,k\pi/2]}\mapsto\big((x,v,\frac{\pi}
{2}),(x^{\prime},v^{\prime},\frac{\pi}{2}),(x,v,\frac{\pi}{2}),\dots\big).
\end{equation}
where $(x^{\prime},v^{\prime})$ is the image of $(x,v)$ under the derivative
of the antipodal map on $\mathcal{\ell}_{x,v}$. We denote
\[
L_{k}\subset Y_{k}
\]
the image of this embedding. Note that $\varphi_{k}$ is itself an embedding
near $L_{k}$.

\item the image of $Y_{k}$ in $\mathcal{P}_{n}$ contains piecewise geodesics
that are not smooth. These paths are not critical points of $F$, but the value
of $F$ at these points is the maximum value $k\pi/2$. However, after a small
perturbation of the map $\varphi_{k}$ in the direction $-\nabla F$, the image
of $Y_{k}\setminus L_{k}$ will be contained in $\mathcal{P}_{n}^{<k\pi/2}$.
\end{enumerate}

We will see in~\S \ref{sec:homologyPn} that, due to the above properties, the
spaces of vertical half-circles $Y_{k}$ faithfully reflect the Morse theory of
$F$ and actually carry the topology of $\mathcal{P}_{n}$. The relevant notion
is that of a \emph{completing manifold} (see Definition~\ref{def:completing} below).

\begin{figure}[ptb]
\begin{center}
\input{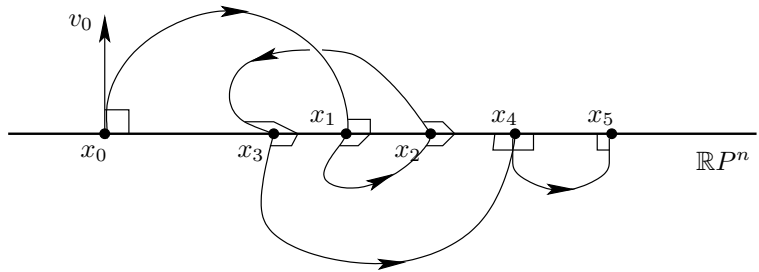}
\end{center}
\caption{An element of $Y_{5}$.}
\label{fig:Yk}
\end{figure}

\begin{proof}
[Proof of Lemma~\ref{lem:index+nullity}]\bigskip The statement concerning
$K_{0}$ is a general fact~\cite[Proposition~2.4.6]{Klingenberg}. We now focus
on the case $k\geq1$.

That $\eta(K_{k})=2n-1$ follows from an explicit computation of which we only
give a sketch and omit the details. The null-space of $E$ (or $F$) at a
critical point $\gamma\in\mathcal{P}$ is spanned by Jacobi fields along
$\gamma$ whose covariant derivative at the endpoints is orthogonal to
$\mathbb{R}P^{n}$. (The explicit formula for the second derivative of $E$ can
be found in~\cite[p.~663]{Hingston-Kalish} and it implies this condition in
view of the fact that $\mathbb{R}P^{n}$ is totally geodesic inside
$\mathbb{C}P^{n}$.) Since one has explicit formulas for the Jacobi fields
along geodesics for the Fubini-Study metric on $\mathbb{C}P^{n}$ (see for
example~\cite[Proposition~3.34]{Besse} or~\cite[pp.~125-126]{Gallot-Hulin-Lafontaine}),
the result follows readily. See also Remark~\ref{rmk:endmanifold} below.

Since
\[
\eta(K_{k})=\dim\,K_{k}=2n-1
\]
(and since clearly the index is constant on the critical set $K_{k}$), we
infer that $K_{k}$ is a nondegenerate Morse-Bott manifold. The manifold
$Y_{k}$ is embedded near $K_{k}$. Since $F=k\pi/2$ on $K_{k}$ and $F\leq
k\pi/2$ on $Y_{k}$ , it follows that the second derivative of $F$ is $\leq0$
on the normal bundle to $K_{k}$ in $Y_{k}$, and at each point $\gamma$ of
$K_{k}$ there is a subspace of $T_{\gamma}\mathcal{P}_{n}$ of dimension $\dim
Y_{k}=(k+1)n$ on which the second derivative of $F$ is $\leq0$. Thus
$\iota(K_{k})+\eta(K_{k})\geq(k+1)n$. But since we know the nullity, it must
be that
\[
\iota(K_{k})\geq\mathrm{codim}(K_{k})=1+(k-1)n.
\]

We will now prove the reverse inequality
\begin{equation}
\iota(K_{k})\leq1+(k-1)n, \label{eq:reverse}
\end{equation}
from which the Lemma follows. Our proof of~\eqref{eq:reverse} goes by
comparing the energy functional on $\mathcal{P}_{n}$ with the energy
functional on the space of free loops in $\mathbb{C}P^{n}$ and making use of
the following well-known facts about the latter. Denote $\mathcal{L}
\mathbb{C}P^{n}:=\{\gamma:S_{\pi}^{1}\rightarrow\mathbb{C}P^{n}\}$ the space
of free loops of Sobolev class $W^{1,2}$ on $\mathbb{C}P^{n}$. The critical
set of the energy functional on $\mathcal{L}\mathbb{C}P^{n}$ for the standard
metric is a disjoint union of Morse-Bott nondegenerate manifolds $L_{m}$,
$m\geq0$ which consist respectively of the geodesics of length $m\pi$ (we
refer to these as being geodesics of \emph{multiplicity $m$}). The index and
nullity of these critical manifolds are respectively equal to $\iota(L_{0}
)=0$, $\eta(L_{0})=2n$ and, for $m\geq1$,~\cite[\S 2.2]{Ziller1977} \
\begin{equation}
\iota(L_{m})=1+(m-1)2n,\qquad\eta(L_{m})=4n-1. \label{eq:indexL}
\end{equation}

\emph{Proof of~\eqref{eq:reverse} for $k$ odd.} Since
inequality~\eqref{eq:reverse} depends only on the behavior of $E$ near $K_{k}
$, let us fix a small enough neighborhood of $K_{k}$ in $\mathcal{P}_{k}$,
denoted $\mathcal{N}_{k}$. There is an embedding
\[
\varphi_{k}:\mathcal{N}_{k}\hookrightarrow\mathcal{L}\mathbb{C}P^{n}
\]
that associates to a path $\gamma$ with endpoints $x_{0}$ and $x_{k}$ the loop
$\gamma\cdot_{\min}C_{x_{k},x_{0}}$, where $\cdot_{\min}$ denotes the minimal
energy concatenation, and $C_{x_{k},x_{0}}$ is the unique vertical half-circle
joining the two distinct points $x_{k}$ and $x_{0}$ such that $\gamma
\cdot_{\min}C_{x_{k},x_{0}}$ is close to a closed geodesic at $x_{0}$ in
$L_{\frac{k+1}{2}}$. Note that $\varphi_{k}(K_{k})=L_{\frac{k+1}{2}}$. If
$V\subset T_{\gamma}\mathcal{P}_{n}$ is a subspace on which the second
derivative of $F$ is negative definite, $d\varphi_{k}V$ is as subspace of
$T_{\varphi_{k}(\gamma)}\mathcal{L}\mathbb{C}P^{n}$ of the same dimension
(because $\varphi_{k}$ is an embedding); since
\begin{align*}
F(\varphi_{k}(\gamma))  &  =F(\gamma)+\frac{\pi}{2}\\
F(\varphi_{k}(\tau))  &  =F(\tau)+F(C_{x_{k},x_{0}})\leq F(\tau)+\frac{\pi}{2}
\end{align*}
for all $\tau\in\mathcal{N}_{k}$, the second derivative of $F$ on
$d\varphi_{k}V$ is also negative definite, and therefore the index of a
geodesic $\gamma\in K_{k}$ in $\mathcal{P}_{n}$ is bounded from above by the
index of the geodesic $\varphi_{k}(\gamma)\in L_{\frac{k+1}{2}}$ in
$\mathcal{L}\mathbb{C}P^{n}$, which is equal to $1+(\frac{k+1}{2}
-1)2n=1+(k-1)n$. This proves~\eqref{eq:reverse}.

\emph{Proof of~\eqref{eq:reverse} for $k$ even.} Given a point $\ast
\in\mathbb{R}P^{n}$ we denote $\mathcal{P}_{\ast,\mathbb{R}P^{n}}
\mathbb{C}P^{n}$ the space of paths $\gamma:[0,1]\rightarrow\mathbb{C}P^{n}$
of Sobolev class $W^{1,2}$ such that $\gamma(0)=\ast$ and $\gamma
(1)\in\mathbb{R}P^{n}$. Let $\Omega\mathbb{C}P^{n}$ be the space of loops of
Sobolev class $W^{1,2}$ in $\mathbb{C}P^{n}$ based at $\ast$. The obvious
inclusions
\[
\Omega\mathbb{C}P^{n}\subset\mathcal{P}_{\ast,\mathbb{R}P^{n}}\mathbb{C}
P^{n}\subset\mathcal{P}_{n}
\]
are codimension $n$ embeddings. Note that any element of $K_{k}$ belongs to
$\mathcal{L}\mathbb{C}P^{n}$, and also to $\mathcal{P}_{\ast,\mathbb{R}P^{n}
}\mathbb{C}P^{n}$ and $\Omega\mathbb{C}P^{n}$ for a suitable choice of
basepoint, and is a critical point of the restriction of the norm (and the
energy function) to each of these spaces. Given $\gamma\in K_{k}$ we denote
$\iota_{\mathcal{P}}(\gamma)$ the index of $\gamma$ as a critical point of the
norm restricted to the submanifold $\mathcal{P}$ of the space of paths on
$\mathbb{C}P^{n}$ . We prove the following relations:
\begin{align}
\iota_{\mathcal{P}_{n}}(\gamma)  &  \leq\iota_{\mathcal{P}_{\ast
,\mathbb{R}P^{n}}\mathbb{C}P^{n}}(\gamma),\label{eq:P-P*}\\
\iota_{\mathcal{P}_{\ast,\mathbb{R}P^{n}}\mathbb{C}P^{n}}(\gamma)  &
\leq\iota_{\Omega\mathbb{C}P^{n}}(\gamma)+n,\label{eq:P*-Omega}\\
\iota_{\Omega\mathbb{C}P^{n}}(\gamma)  &  =\iota_{\mathcal{L}\mathbb{C}P^{n}
}(\gamma). \label{eq:Omega-L}
\end{align}
Since $\gamma\in K_{k}\subset\mathcal{L}\mathbb{C}P^{n}$ has multiplicity
$m=k/2$, we obtain using~\eqref{eq:indexL} that $\iota_{\mathcal{L}
\mathbb{C}P^{n}}(\gamma)=1+(\frac{k}{2}-1)2n=1+(k-2)n$. Thus $\iota
_{\mathcal{P}_{n}}(\gamma)\leq1+(k-2)n+n=1+(k-1)n$, which proves~\eqref{eq:reverse}.

Let us prove~\eqref{eq:P-P*}. The group $\mathrm{Isom}(\mathbb{C}
P^{n},\mathbb{R}P^{n})$ of isometries of $\mathbb{C}P^{n}$ that preserve
$\mathbb{R}P^{n}$ acts transitively on $\mathbb{R}P^{n}$. Let us choose an
open neighborhood $U$ of $\ast$ in $\mathbb{R}P^{n}$ and a smooth map
\[
\psi:U\rightarrow\mathrm{Isom}(\mathbb{C}P^{n},\mathbb{R}P^{n})
\]
such that
\[
\psi(x)(x)=\ast.
\]
Recall the evaluation map $\mathrm{ev}_{0}:\mathcal{P}_{n}\rightarrow
\mathbb{R}P^{n}$ and let $\mathcal{N}:=\mathrm{ev}_{0}^{-1}(U)\subset
\mathcal{P}_{n}$, so that $\mathcal{N}$ is an open neighborhood of
$\mathcal{P}_{\ast,\mathbb{R}P^{n}}\mathbb{C}P^{n}$. We have diffeomorphisms
which are inverse to each other
\[
\xymatrix{
\mathcal N \ \ \ar@<1ex>[rrr]^-P_-{\sim} & & & \ \ \mathcal{P}_{*,\mathbb{R}P^n} \mathbb{C}P^n \times U \ar@<1ex>[lll]^-Q
}
\]
given by
\[
P(\alpha):=(\psi(\mathrm{ev}_{0}(\alpha))\circ\alpha,\mathrm{ev}_{0} 
(\alpha)),\qquad Q(\beta,x):=\psi(x)^{-1}\circ\beta.
\]
Since we use elements $\psi\in\mathrm{Isom}(\mathbb{C}P^{n},\mathbb{R}P^{n})$
the norm is preserved in the sense that, if $\alpha\in\mathcal{N}$, then
\[
F(\alpha)=F\circ pr_{1}\circ P(\alpha)
\]
where $pr_{1}$ is the projection on the first factor of $\mathcal{P} 
_{\ast,\mathbb{R}P^{n}}\mathbb{C}P^{n}\times U$. Now let $V\subset T_{\gamma
}\mathcal{N}$ be a subspace of dimension $\iota$ on which the second
derivative of the norm is negative definite. Then $d(pr_{1}\circ P)(V)$ is a
subspace of T$_{\gamma}\mathcal{P}_{\ast,\mathbb{R}P^{n}}\mathbb{C}P^{n}$ of
dimension $\iota$ on which the second derivative is negative definite. This
implies that $\iota_{\mathcal{N}}(\gamma)\leq\iota_{\mathcal{P}_{\ast
,\mathbb{R}P^{n}}\mathbb{C}P^{n}}(\gamma)$, which is equivalent to~\eqref{eq:P-P*}.

Inequality~\eqref{eq:P*-Omega} is implied by the fact that the embedding
$\Omega\mathbb{C}P^{n}\subset\mathcal{P}_{*,\mathbb{R}P^{n}} \mathbb{C}P^{n}$
has codimension $n$.

The identity~\eqref{eq:Omega-L} is proved by an argument similar to the one
given for~\eqref{eq:P-P*}: using the fact that the group $\mathrm{Isom} 
(\mathbb{C}P^{n})$ of isometries of $\mathbb{C}P^{n}$ acts transitively, we
construct a norm preserving diffeomorphism $\mathcal{N}\simeq\Omega
\mathbb{C}P^{n}\times U$ between an open neighborhood $\mathcal{N} 
:=\mathrm{ev}_{0}^{-1}(U)$ of $\Omega\mathbb{C}P^{n}$ in $\mathcal{L} 
\mathbb{C}P^{n}$ and the product of $\Omega\mathbb{C}P^{n}$ with an open
neighborhood $U$ of $\ast$ in $\mathbb{C}P^{n}$. The relation $F|_{\mathcal{N}
}\equiv F|_{\Omega\mathbb{C}P^{n}}\circ pr_{1}$ implies~\eqref{eq:Omega-L}.
\end{proof}

\begin{remark}
\label{rmk:endmanifold} The statement of Lemma~1 can alternatively be proved
using the Morse index theorem for the endmanifold case
in~\cite{Hingston-Kalish}. The key notions are those of \emph{$\mathbb{R}
P^{n}$-focal point} and \emph{$\mathbb{R}P^{n}$-Jacobi field} along a geodesic
$\gamma$ starting at $x\in\mathbb{R}P^{n}$ in an orthogonal direction. These
generalize the standard notions of conjugate point and Jacobi field. Since the
Jacobi fields in $\mathbb{C}P^{n}$ for the Fubini-Study metric are explicitly
known~\cite[pp.~125-126]{Gallot-Hulin-Lafontaine}, the $\mathbb{R}P^{n}$-focal
points can also be computed explicitly. The key argument in this computation
is the following Lemma, which we leave to the interested reader.

\begin{lemma}
Let $x\in\mathbb{R}P^{n}$ and $\gamma$ be a geodesic in $\mathbb{C}P^{n}$
starting at $x$ with unit speed in a direction orthogonal to $\mathbb{R}P^{n}
$. Denote $X_{1}:=\dot{\gamma}(0)$, $X_{2}:=iX_{1}$, let $(X_{3},X_{5},\dots
X_{2n-1})$ be an orthonormal basis of $\langle X_{1}\rangle^{\perp}$ in
$T_{x}\mathbb{R}P^{n}$ and denote $X_{2j}:=iX_{2j-1}$ for $j\in\{2,\dots,n\}$.
Denote $(X_{1}(s),X_{2}(s),\dots,X_{2n}(s))$ the orthonormal frame at
$\gamma(s)$ obtained from $(X_{1},X_{2},\dots,X_{2n})$ by parallel transport
along $\gamma$. Then
\[
T_{\gamma(\frac{\pi}{2})}\mathbb{R}P^{n}=\langle X_{2}(\frac{\pi}{2}
),X_{3}(\frac{\pi}{2}),X_{5}(\frac{\pi}{2}),\dots,X_{2n-1}(\frac{\pi}
{2})\rangle.
\]

\end{lemma}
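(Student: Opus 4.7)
My approach is to verify the identification of $T_{\gamma(\pi/2)}\mathbb{R}P^n$ by an explicit coordinate computation, exploiting the symmetric-space structure of $(\mathbb{C}P^n,g_{FS})$.

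First, I would place $x=[1:0:\ldots:0]$ and, in the affine chart $z_0=1$, take $X_1=\partial/\partial y_1$, completing the frame by $X_2=iX_1=-\partial/\partial x_1$ together with $X_{2j-1}=\partial/\partial x_j$ and $X_{2j}=iX_{2j-1}=\partial/\partial y_j$ for $j\ge 2$. Then $\gamma$ lives on the totally geodesic complex line $\ell=\ell_{x,X_1}=\{[z_0:z_1:0:\ldots:0]\}$, and explicitly $\gamma(s)=[\cos s:i\sin s:0:\ldots:0]$, so the endpoint is $\gamma(\pi/2)=[0:1:0:\ldots:0]=:x^*$. This point lies in $\mathbb{R}P^n$ since it belongs to the equator $\mathbb{R}\ell=\ell\cap\mathbb{R}P^n$, and in the affine chart $z_1=1$ at $x^*$ with coordinates $(w_0,w_2,\ldots,w_n)=(z_0/z_1,z_2/z_1,\ldots,z_n/z_1)$, the tangent space $T_{x^*}\mathbb{R}P^n$ is cut out as the $\mathbb{R}$-span of the real coordinate directions $\partial/\partial u_j$, where $w_j=u_j+iv_j$.

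Next, I would use that $\mathbb{C}P^n=SU(n+1)/S(U(n)\times U(1))$ is a Hermitian symmetric space, for which parallel transport along the geodesic $\gamma(s)=\exp(sY)\cdot x$ (with $Y\in\mathfrak{m}$ corresponding to $X_1$) is realised by the differential of left translation by $g(s):=\exp(sY)$. A direct matrix calculation gives $g(\pi/2)$ as the element with top-left $2\times 2$ block $\bigl(\begin{smallmatrix}0&i\\ i&0\end{smallmatrix}\bigr)$ and identity $(n-1)\times(n-1)$ block, acting on $\mathbb{C}P^n$ by $[z_0:z_1:z_2:\ldots:z_n]\mapsto[iz_1:iz_0:z_2:\ldots:z_n]$. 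The parallel-transported frame $X_k(\pi/2)$ at $x^*$ is then the pushforward of the coordinate vectors under this map, read in the chart $z_1=1$, and matching the result with the above description of $T_{x^*}\mathbb{R}P^n$ produces the claimed spanning relation.

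The main obstacle is the bookkeeping of complex-structure factors: the Kähler $J$ at $x$ and the multiplication by $i$ arising from the transition to the chart $z_1=1$ at $x^*$ both contribute $\pm i$ signs to the pushforwards, and one must track these carefully to see exactly which vectors $X_k(\pi/2)$ land inside $T_{x^*}\mathbb{R}P^n$ and which land inside its normal bundle. A geometrically more transparent variant, which also serves as a sanity check, is to factor the computation through the arc $\alpha\subset\mathbb{R}\ell$ joining $x$ to $x^*$: parallel transport along $\alpha$ preserves $T\mathbb{R}P^n$ tautologically since $\mathbb{R}P^n$ is totally geodesic in $\mathbb{C}P^n$, so the difference between parallel transport along $\gamma$ and along $\alpha$ is captured by the holonomy of the Chern connection on the normal bundle $N\ell\simeq\mathcal{O}(1)^{n-1}$ around the quarter-sphere of $\ell$ bounded by $\gamma\cup\alpha^{-1}$, a quantity entirely determined by $c_1(\mathcal{O}(1))=1$. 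Combining the totally-geodesic property of $\alpha$ with this explicit phase yields the identification of $T_{x^*}\mathbb{R}P^n$ in terms of the parallel-transported frame.
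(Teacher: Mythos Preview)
The paper does not prove this lemma: it is explicitly ``left to the interested reader'' in Remark~1, so there is nothing to compare your argument against at the level of method. Your approach is sound and is almost certainly what the authors had in mind. The key fact you invoke---that for a Riemannian symmetric space $G/K$ the one-parameter group of transvections $g(s)=\exp(sY)$, $Y\in\mathfrak m$, realises parallel transport along $\gamma(s)=g(s)\cdot o$ via $dg(s)_o$---is standard and reduces the problem to a finite-dimensional linear calculation. Your holonomy variant through the quarter-sphere in $\ell$ is also correct and gives a nice conceptual explanation of the phase that appears in the transverse directions.

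One caveat: you stop short of actually doing the bookkeeping you flag as the ``main obstacle,'' and if you carry it through you will find that the statement of the lemma appears to contain an index slip. With your explicit choices ($x=[1:0:\cdots:0]$, $X_1=\partial/\partial y_1$, $X_{2j-1}=\partial/\partial x_j$ for $j\ge 2$), the coordinate expression of $g(\pi/2)$ gives $w_0=z_1$ and $w_j=-iz_j$ for $j\ge 2$, so that $X_{2j-1}(\pi/2)=-\partial/\partial v_j$ is \emph{normal} to $\mathbb RP^n$ at $x^*$, while $X_{2j}(\pi/2)=\partial/\partial u_j$ is \emph{tangent}. Equivalently, your holonomy computation on $N\ell\simeq\mathcal O(1)^{n-1}$ over the quarter-sphere gives a phase $e^{\pm i\pi/2}$, which rotates the tangent vectors $X_{2j-1}$ into normal ones. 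Thus the correct conclusion of the computation is
\[
T_{\gamma(\pi/2)}\mathbb RP^n=\langle X_2(\tfrac\pi2),X_4(\tfrac\pi2),\ldots,X_{2n}(\tfrac\pi2)\rangle,
\]
rather than the span of the odd-indexed vectors as printed. (Note also that the phrase ``$\langle X_1\rangle^\perp$ in $T_x\mathbb RP^n$'' is already problematic, since $X_1\perp T_x\mathbb RP^n$; presumably $\langle X_2\rangle^\perp$ is intended.) This does not affect the paper's use of the lemma, which is only to provide an alternative route to the index computation in Lemma~1; the resulting index and nullity are unchanged. But you should be aware that your method, correctly executed, proves the corrected statement rather than the one written.
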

\end{remark}

\bigskip

\begin{remark}
Consider the map
\[
\pi_{k}:Y_{k}\rightarrow(\mathbb{R}P^{n})^{k+1},\qquad\gamma\mapsto
(\gamma(0),\gamma(1),\dots,\gamma(k)),
\]
denote $\Delta_{k}:=\{(x_{0},\dots,x_{k})\in(\mathbb{R}P^{n})^{k+1}
\,:\,\exists j,\ x_{j}=x_{j+1}\}\subset(\mathbb{R}P^{n})^{k+1}$, let
$\Delta_{k}^{c}$ be the complement of $\Delta_{k}$ and denote $\dot{Y}_{k}:=\pi_{k}^{-1}(\Delta_{k}^{c})$. Then $\pi_{k}:\dot{Y}_{k}\rightarrow
\Delta_{k}^{c}$ is a $2^{k}:1$ cover and is a $2:1$ cover near $K_{k}
\subset\dot{Y}_{k}$. The second claim follows from the fact that, given a
point $x$ with antipode $x^{\prime}$, the fiber $\pi_{k}^{-1}(x,x^{\prime
},x,\dots)$ contains exactly two geodesics.
\end{remark}


\section{The homology of $\mathcal{P}_{n}$}

\label{sec:homologyPn}

Given $a\geq0$ we denote $\mathcal{P}_{n}^{\leq a}$, $\mathcal{P}_{n}^{<a}$
the sublevel sets $\{F\leq a\}$, respectively $\{F<a\}$.

\begin{theorem}
\label{thm:HPn} The norm functional on $\mathcal{P}_{n}$ for the Fubini-Study
metric on $\mathbb{C}P^{n}$ is a \emph{(strong) perfect Morse function},
meaning that the homology of $\mathcal{P}_{n}$ with arbitrary coefficients is
the direct sum of the \emph{level homology groups} of $F$:
\begin{equation}
H_{\cdot}(\mathcal{P}_{n})\simeq H_{\cdot}(\mathbb{R}P^{n})\oplus
\bigoplus_{k\geq1}H_{\cdot}(\mathcal{P}_{n}^{\leq k\pi/2},\mathcal{P}
_{n}^{<k\pi/2}). \label{eq:HPn}
\end{equation}

\end{theorem}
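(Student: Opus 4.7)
The plan is to apply Morse-Bott theory to the norm functional $F$ and use the spaces $Y_k$ as completing manifolds to establish perfectness.

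By Lemma~\ref{lem:index+nullity}, $F$ is Morse-Bott with isolated critical values $k\pi/2$ and nondegenerate critical manifolds $K_k$. The standard Morse-Bott attaching argument gives, for every $k\geq 1$,
$$H_\cdot(\mathcal{P}_n^{\leq k\pi/2},\mathcal{P}_n^{<k\pi/2})\cong H_{\cdot-\iota(K_k)}(K_k;o^-_k)$$
via the Thom isomorphism for the negative normal bundle of $K_k$, where $o^-_k$ is its orientation local system (trivial mod $2$). For $k=0$, $\mathcal{P}_n^{\leq\epsilon}$ deformation retracts to $\mathbb{R}P^n$. The decomposition~(\ref{eq:HPn}) is equivalent to the vanishing of all connecting homomorphisms in the Morse-Bott long exact sequences, i.e.\ to producing, for every relative class, an absolute lift in $\mathcal{P}_n^{\leq k\pi/2+\epsilon}$.

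For each $k\geq 1$ I would use properties (i)--(iii) of the map $\varphi_k:Y_k\to\mathcal{P}_n$ of~(\ref{eq:varphik}). By (ii), $L_k\subset Y_k$ has codimension $\iota(K_k)$ and is carried diffeomorphically onto $K_k$; by (iii), $F\circ\varphi_k\leq k\pi/2$ with equality only on $L_k$. Since $\dim Y_k=(k+1)n=\dim K_k+\iota(K_k)$, the transverse directions to $L_k$ in $Y_k$ must all lie in the negative eigenspace of $d^2F$ along $K_k$, and a dimension count then forces $\varphi_k$ to realize a tubular neighborhood of $L_k$ in $Y_k$ as a negative disk bundle of $K_k$ inside $\mathcal{P}_n$. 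After the downward gradient perturbation $\tilde\varphi_k$ of (iii), excision yields
$$\tilde\varphi_{k*}:H_\cdot(Y_k,Y_k\setminus L_k)\xrightarrow{\cong}H_\cdot(\mathcal{P}_n^{\leq k\pi/2+\epsilon},\mathcal{P}_n^{<k\pi/2}).$$

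It remains to lift every class in $H_\cdot(Y_k,Y_k\setminus L_k)$ to an absolute class in $H_\cdot(Y_k)$. For this I exploit the iterated fiber product structure of $Y_k$: the forgetful projection $(x_0,v_0,\theta_0,\dots)\mapsto(x_0,v_0)$ presents $Y_k$ as a bundle over $SN\mathbb{R}P^n\cong K_k$ admitting $L_k$ as a section, and thickening cycles in $K_k$ by fiber representatives produces absolute cycles in $Y_k$ mapping to the required Thom classes. Pushing these forward via $\tilde\varphi_k$ yields absolute lifts in $\mathcal{P}_n^{\leq k\pi/2+\epsilon}$, forcing all connecting homomorphisms to vanish and giving~(\ref{eq:HPn}).

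The main technical obstacle is the identification of $\varphi_k$ near $L_k$ with a negative disk bundle for $K_k$ inside $\mathcal{P}_n$: it requires matching the transverse tangent directions of $Y_k$ along $L_k$ with the negative eigenspace of the Hessian of $F$, which one can carry out using the explicit Jacobi-field description for the Fubini-Study metric recalled in Remark~\ref{rmk:endmanifold}. With arbitrary coefficients, care is also needed to keep track of the local systems $o^-_k$ and of the orientation of $Y_k$; over $\mathbb{Z}/2$ these issues are automatic.
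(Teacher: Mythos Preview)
Your proof is correct and follows essentially the same route as the paper: verify that $(Y_k,L_k,\varphi_k)$ is a strong completing manifold for $K_k$ (embedding near $L_k$, correct codimension, retraction onto $L_k$ via the projection $p_k$ to $SN\mathbb{R}P^n$), and then invoke the splitting of the Morse--Bott long exact sequence. The paper packages the lifting step more cleanly via the abstract retraction argument $s_!p_!=\mathrm{id}$ (Remark~\ref{rmk:perfect} and Lemma~\ref{lem:completing}), and your ``main technical obstacle'' is lighter than you suggest: once $K_k$ is known to be nondegenerate, the identification $(\varphi_k|_{L_k})^*\nu^-\simeq\nu_{L_k}$ follows from the dimension count plus $F\le k\pi/2$ on $Y_k$, with no need for explicit Jacobi fields.
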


\begin{remark}
In the literature the term ``perfect Morse function" usually means that the
isomorphism between the homology of the total space and the direct sum of the
level homologies holds with coefficients in any field~\cite{Atiyah-Bott}.
\end{remark}

The proof of the theorem relies on the fact that the $Y_{k}$'s are strong
completing manifolds in the sense of Definition~\ref{def:completing} below.
The latter is reminiscent of~\cite[p.~531]{Atiyah-Bott} and~\cite[p.~97]{Hingston} (see also~\cite[IX.7]{Morse}, \cite[p.~979]{Bott-Samelson}). It
follows of course that the energy functional $E$ is also a (strong) perfect
Morse function.

\begin{definition}
\label{def:completing} Let $X$ be a Hilbert manifold and $f:X\to\mathbb{R}$ a
$C^{2}$-function satisfying condition~(C) of Palais and Smale~\cite[p.~26]{Klingenberg}. 
Let
\[
K:=\mathrm{Crit}(f)\cap f^{-1}(0)\subset X
\]
be the critical locus of $f$ at level $0$ and assume $K$ is a Morse-Bott
nondegenerate manifold of index $\iota(K)$. Denote $X^{\le0}$, $X^{<0}$,
$X^{=0}$ the (sub)level sets of $f$.

A \emph{completing manifold for $K$} is a finite dimensional manifold $Y$
together with a submanifold $L\subset Y$ of codimension $\iota(K)$ and a map
$\varphi:Y\rightarrow X^{\leq0}$ subject to conditions~\eqref{item:embedding}
and~\eqref{item:surjective} below. We say that $Y$ is a \emph{strong
completing manifold} if it satisfies conditions~\eqref{item:embedding}
and~\eqref{item:retraction} (in which case condition~\eqref{item:surjective}
follows). We will call $Y$ a \emph{local completing manifold} if it satisfies
just condition~\eqref{item:embedding}.

\begin{enumerate}
\item \label{item:embedding} the map $\varphi$ is an embedding \emph{near}
$L$, it maps $L$ diffeomorphically onto $K$, and
\[
\varphi^{-1}(K)=L.
\]

\item \label{item:surjective} the canonical map
\[
H_{\cdot}(Y)\to H_{\cdot}(Y,Y\setminus L)
\]
is surjective for any choice of coefficient ring.

\item \label{item:retraction} $Y$ admits a retraction onto $L$.
\end{enumerate}
\end{definition}

\begin{remark}
\label{rmk:completing-perturbed} By an arbitrarily small perturbation of
$\varphi$ along $-\nabla F$ we will have pushed all noncritical points at
level $0$ below level $0$, so that we obtain a map
\[
\widetilde{\varphi}:(Y,Y\setminus L)\rightarrow(X^{\leq0},X^{<0})
\]
satisfying the same conditions as $\varphi$. We can, and shall, assume without
loss of generality that $\widetilde{\varphi}=\varphi$.
\end{remark}

\begin{remark}
\label{rmk:perfect} Let us prove that condition~\eqref{item:retraction}
implies condition~\eqref{item:surjective}. Let $s:L\hookrightarrow Y$ and
$\mathrm{incl}:Y\hookrightarrow(Y,L)$ be the inclusions, denote $\nu$ the
normal bundle to $L$ in $Y$ and let $o_{\nu}$ be the orientation local system
for $\nu$. This is a local system with fiber $\mathbb{Z}$ on $Y$, whose
monodromy along a loop is minus the identity iff the loop is orientation
reversing for $\nu$. Then we have a commutative diagram
\begin{equation}
\label{eq:Thom}\xymatrix{ H_\cdot(Y) \ar[r]^-{\mathrm{incl}_*} \ar[dr]_{s_!} & H_\cdot(Y,Y\setminus L) \ar[d]_-\simeq^-{Thom}\\ & H_{\cdot-\mathrm{codim}(L)}(L;o_\nu) }
\end{equation}
Here the vertical arrow is the Thom isomorphism (composed with excision) and
$s_{!}=PD\circ s^{*}\circ PD$, with $PD$ denoting Poincar\'e duality. In case
we have a retraction $p:Y\to L$, we obtain $ps=\mathrm{Id}_{L}$ and
$s_{!}p_{!}=\mathrm{Id}_{H_{\cdot}(L)}$. Thus $s_{!}$ is surjective, and so is
$\mathrm{incl}_{*}$.
\end{remark}

\begin{lemma}
\label{lem:completing-local} Let $(Y,L,\varphi)$ with $\varphi:Y\rightarrow
X^{\leq0}$ be local completing manifold data for $K=\mathrm{Crit}(f)\cap
f^{-1}(0)$ as in Definition~\ref{def:completing}. Let $\iota$ be the index of
$K$.

(a) the map $\varphi$ induces a canonical morphism
\[
\varphi_{*}:H_{\cdot}(Y,Y\setminus L)\to H_{\cdot}(X^{\le0},X^{<0}).
\]

(b) Given a submanifold $Z\subset Y$ that is transverse to $L$, denote $A:=
Z\cap L$ and let $k$ be the dimension of $A$. The codimension of $A$ in $Z$ is
$\iota$, and the class
\[
\varphi_{*}([Z,Z\setminus A])\in H_{k+\iota}(X^{\leq0},X^{<0})
\]
is the image under the Thom isomorphism $H_{k}(K;o_{\nu^{-}})\overset{\sim
}{\longrightarrow}H_{k+\iota}(X^{\leq0},X^{<0})$ of the class
\[
[A]\in H_{k}(K;o_{\nu^{-}}).
\]

\end{lemma}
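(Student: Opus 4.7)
\emph{Part (a).} Let $U\subset Y$ be an open neighborhood of $L$ on which $\varphi$ is an embedding. By Remark~\ref{rmk:completing-perturbed} we may arrange that $\varphi(U\setminus L)\subset X^{<0}$, so $\varphi$ restricts to a map of pairs $(U,U\setminus L)\to(X^{\le 0},X^{<0})$. Since $Y\setminus U$ is contained in the open set $Y\setminus L$, excision gives a canonical isomorphism $H_\cdot(Y,Y\setminus L)\cong H_\cdot(U,U\setminus L)$; composing this with the map induced by $\varphi|_U$ defines $\varphi_*$.

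\emph{Morse--Bott model at $K$.} For (b), apply the Morse--Bott lemma to $f$ along $K$: a tubular neighborhood $V$ of $K$ in $X$ is diffeomorphic to a disc bundle inside $\nu K=\nu^-\oplus\nu^+$ (with $\nu^-$ the negative normal bundle, of rank $\iota$), in coordinates $(x,v^-,v^+)$ in which $f=-\|v^-\|^2+\|v^+\|^2$. The radial homotopy $v^+\mapsto 0$ retracts $(V^{\le 0},V^{<0})$ onto $(D(\nu^-),S(\nu^-))$, and excision yields the Thom isomorphism $H_\cdot(X^{\le 0},X^{<0})\cong H_{\cdot-\iota}(K;o_{\nu^-})$.

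\emph{Linearization claim.} The key structural input is that the derivative of $\varphi$ along $L$ induces a bundle isomorphism $\nu_{L/Y}\overset{\sim}{\to}\varphi_L^*\nu^-$, where $\varphi_L:L\to K$ denotes the restriction of $\varphi$; in particular $o_{\nu_{L/Y}}\cong\varphi_L^*o_{\nu^-}$. To see this, fix $p\in L$, choose local coordinates $(u,w)$ on $Y$ with $L=\{w=0\}$, and expand
\[
\varphi(u,w)=\bigl(\varphi_L(u)+O(w),\ A(u)w+O(w^2),\ B(u)w+O(w^2)\bigr)
\]
in the Morse--Bott coordinates near $\varphi(p)$. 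The inclusion $\varphi(Y)\subset X^{\le 0}$ forces $\|B(u)w\|\le\|A(u)w\|$ to leading order in $w$, while $\varphi$ being an embedding makes $(A(u),B(u)):\mathbb{R}^\iota\to\nu^-_{\varphi_L(p)}\oplus\nu^+_{\varphi_L(p)}$ injective. Therefore $\ker A(u)\subset\ker B(u)\subset\ker(A(u),B(u))=0$, so $A(u)$ is itself a linear isomorphism between two rank-$\iota$ spaces.

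\emph{Conclusion and main obstacle.} By naturality of the Thom isomorphism with respect to the bundle map $A$ covering $\varphi_L$, the diagram
\[
\xymatrix{
H_{k+\iota}(Y,Y\setminus L)\ar[r]^-{\varphi_*}\ar[d]^-\simeq & H_{k+\iota}(X^{\le 0},X^{<0})\ar[d]^-\simeq \\
H_k(L;o_{\nu_{L/Y}})\ar[r]_-{(\varphi_L)_*} & H_k(K;o_{\nu^-})
}
\]
commutes, with vertical arrows the Thom isomorphisms. The transversality of $Z$ to $L$ with $Z\cap L=A$ identifies $[Z,Z\setminus A]$, after restriction to a tubular neighborhood of $A$ in $Z$, with the Thom class of the disc bundle $\nu_{L/Y}|_A\subset Z$; the left-hand Thom map thus sends $[Z,Z\setminus A]$ to $[A]\in H_k(L;o_{\nu_{L/Y}})$, and pushing forward through the diffeomorphism $\varphi_L$ identifies this with $[A]\in H_k(K;o_{\nu^-})$, as claimed. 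The hardest step is the linearization claim, in which the three qualitative hypotheses defining a local completing manifold must be combined to produce the bundle isomorphism $\nu_{L/Y}\cong\varphi_L^*\nu^-$; once this is in place, the remainder of the argument is a routine chase through the naturality of Thom classes and the transverse-intersection description of Thom duals.
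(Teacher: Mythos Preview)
Your proof is correct and follows the same approach as the paper; the key point in both is the identification of the normal bundle $\nu_{L/Y}$ with $\varphi_L^*\nu^-$. The paper's own proof of part~(b) is extremely terse---it reads in full ``Transversality implies that the normal bundle to $A$ in $Z$ is isomorphic to the restriction of $\nu^{-}$ to $A$, and the conclusion follows''---so your explicit Morse--Bott model and linearization argument are a welcome unpacking of what the paper leaves implicit. One minor remark on part~(a): the paper perturbs $\varphi$ globally by the gradient flow of $-\nabla f$, which (since $\varphi^{-1}(K)=L$) already yields a map of pairs $(Y,Y\setminus L)\to(X^{\le 0},X^{<0})$ directly; your preliminary excision to the neighborhood $U$ is therefore unnecessary, though harmless.
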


\begin{proof}
(a) The morphism $\varphi_{*}$ is well-defined in view of
Remark~\ref{rmk:completing-perturbed}. More precisely, one considers a
perturbation of $\varphi$ of the form $\tilde\varphi_{\varepsilon}
:=\phi^{\varepsilon}_{-\nabla F} \circ\varphi$ where $\phi^{\varepsilon
}_{-\nabla F}$, $\varepsilon>0$ is the time-$\varepsilon$ flow of $-\nabla F$.
All the maps $\tilde\varphi_{\epsilon}$ are homotopic and act as
$(Y,Y\setminus L)\to(X^{\le0},X^{<0})$.

(b) The key point is identifying the normal bundle. Transversality implies
that the normal bundle to $A$ in $Z$ is isomorphic to the restriction of
$\nu^{-}$ to $A$, and the conclusion follows.
\end{proof}

\begin{lemma}
\label{lem:completing} Let $X$, $f$, and $K$ be as in
Definition~\ref{def:completing} and denote $\nu^{-}$ the negative bundle of
$K$ (of rank $\iota(K)$). If $K$ admits a completing manifold then we have
short exact sequences
\[
0\rightarrow H_{\cdot}(X^{<0})\rightarrow H_{\cdot}(X^{\leq0})\rightarrow
H_{\cdot-\iota(K)}(K;o_{\nu^{-}})\rightarrow0.
\]
If $K$ admits a strong completing manifold these exact sequences are split, so
that
\[
H_{\cdot}(X^{\leq0})\simeq H_{\cdot}(X^{<0})\oplus H_{\cdot-\iota(K)}
(K;o_{\nu^{-}}).
\]

\end{lemma}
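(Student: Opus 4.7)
The plan is to derive the short exact sequence by exhibiting surjectivity of the relative‑homology map $H_\cdot(X^{\leq 0}) \to H_\cdot(X^{\leq 0}, X^{<0})$, and, in the strong case, to produce the splitting by transferring a natural section constructed on the completing manifold $Y$.

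First I would invoke the standard Morse--Bott picture at level zero. Since $K$ is Morse--Bott nondegenerate of finite index $\iota = \iota(K)$, a Palais--Smale deformation of $X^{\leq 0}$ contracts a neighbourhood of $K$ onto the disk bundle of $\nu^-$, with $X^{<0}$ matching the associated sphere bundle; combined with excision and the twisted Thom isomorphism this yields
\[
H_\cdot(X^{\leq 0}, X^{<0}) \simeq H_{\cdot - \iota}(K; o_{\nu^-}).
\]
The long exact sequence of the pair then reads
\[
\cdots \to H_\cdot(X^{<0}) \to H_\cdot(X^{\leq 0}) \xrightarrow{\,j_*\,} H_\cdot(X^{\leq 0}, X^{<0}) \xrightarrow{\,\partial\,} H_{\cdot - 1}(X^{<0}) \to \cdots,
\]
and the desired short exact sequence follows as soon as $j_*$ is shown to be surjective in every degree.

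For the surjectivity I would use a diagram chase. Assuming (via Remark~\ref{rmk:completing-perturbed}) that $\varphi : (Y, Y \setminus L) \to (X^{\leq 0}, X^{<0})$, consider
\[
\xymatrix{
H_\cdot(Y) \ar[r]^-{\mathrm{incl}_*} \ar[d]_{\varphi_*} & H_\cdot(Y, Y \setminus L) \ar[d]^{\varphi_*} \\
H_\cdot(X^{\leq 0}) \ar[r]^-{j_*} & H_\cdot(X^{\leq 0}, X^{<0}).
}
\]
The top arrow is surjective by condition~(ii) of Definition~\ref{def:completing}. The right vertical arrow is in fact an \emph{isomorphism}: applying the Thom isomorphism on each side it becomes $(\varphi|_L)_* : H_{\cdot - \iota}(L; o_{\nu_L}) \to H_{\cdot - \iota}(K; o_{\nu^-})$, with $\varphi|_L$ a diffeomorphism, and the orientation local systems match because $d\varphi$ embeds $\nu_L$ as a rank-$\iota$ subbundle of the normal bundle of $K$ in $X$ on which $d^2 f \leq 0$; any such maximal-negative subbundle is canonically homotopic to $\nu^-$ through the contractible space of such subbundles (graphs $A : \nu^- \to \nu^+$ with $\|Av\|^2_{q^+} \leq \|v\|^2_{-q^-}$). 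This identification is essentially the content of Lemma~\ref{lem:completing-local}(b) applied to $Z=Y$. Commutativity of the square then forces $j_*$ to be surjective.

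For the splitting in the strong case I would exploit the retraction $p : Y \to L$: by Remark~\ref{rmk:perfect} the map $p_!$ is a right inverse to $s_!$, so $\sigma_Y := p_! \circ \mathrm{Thom}$ is a section of $\mathrm{incl}_*$. Since the right vertical $\varphi_*$ in the square above is an isomorphism, setting $\sigma := \varphi_* \circ \sigma_Y \circ \varphi_*^{-1}$ produces a section of $j_*$ and hence the claimed direct sum decomposition. The main subtlety of the whole argument is precisely the verification that the right vertical $\varphi_*$ is an isomorphism compatible with the two Thom isomorphisms: this requires the bookkeeping of orientation local systems via the contractibility of the space of maximal-negative rank-$\iota$ subbundles of the normal bundle of $K$, with $\nu^-$ as the canonical basepoint. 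Once this is in place the remainder of the argument is essentially formal.
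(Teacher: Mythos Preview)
Your proof is correct and follows essentially the same route as the paper: the same commutative square comparing the long exact sequences of $(Y,Y\setminus L)$ and $(X^{\le 0},X^{<0})$, surjectivity of the top arrow from condition~(ii), the right vertical arrow being an isomorphism via the Thom isomorphisms, and the splitting built from the section $\sigma$ of Remark~\ref{rmk:perfect} transported through $\varphi_*$. Your justification that the orientation local systems match (via contractibility of the space of maximal rank-$\iota$ nonpositive subbundles) is in fact more explicit than the paper's, which simply asserts $(\varphi|_L)^*\nu^-\simeq\nu$ from the hypotheses on $\varphi$.
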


\begin{proof}
Let $(Y,L,\varphi)$ be the completing manifold data for $K$. In view of
Remark~\ref{rmk:completing-perturbed} we can assume without loss of generality
that $\varphi$ acts as
\[
\varphi:(Y,Y\setminus L)\to(X^{\le0},X^{<0}).
\]
By functoriality of the long exact sequence of a pair we obtain a commutative
diagram
\[
\xymatrix
@C=40pt {H_{\cdot}(X^{<0})\ar[r]  &  H_{\cdot}(X^{\leq0})\ar[r]^-
{(\mathrm{incl}_{X})_{\ast}}  &  H_{\cdot}(X^{\leq0},X^{<0}) \\
H_{\cdot}(Y\setminus L)\ar[r]\ar[u]  &  H_{\cdot}(Y)\ar@{->>}[r]^-
{\mathrm{incl}_{\ast}}\ar[u]_{\varphi_{\ast}}  &  H_{\cdot}(Y,Y\setminus
L)\ar[u]_{\varphi_{\ast}^{rel}}^{\simeq}}
\]
The map $\mathrm{incl}_{\ast}$ is surjective by assumption. That the map
$\varphi_{\ast}^{rel}$ is an isomorphism follows from the fact that $\varphi$
is an embedding near $L$, maps $L$ diffeomorphically onto the nondegenerate
critical manifold $K$, and maps $Y\setminus L$ into $X^{<0}$, so that
\[
(\varphi|_{L})^{\ast}\nu^{-}\simeq\nu.
\]
Thus $\varphi_{\ast}^{rel}$ can be written as the composition
\[
H_{\cdot}(Y,Y\setminus L)\simeq H_{\cdot-\iota(K)}(L;o_{\nu})\simeq
H_{\cdot-\iota(K)}(K;o_{\nu^{-}})\simeq H_{\cdot}(X^{\leq0},X^{<0}),
\]
where the first map is the Thom isomorphism composed with (the inverse of)
excision, and the third map follows from Morse-Bott
theory~\cite{Bott-nondegenerate}.

Thus $(\mathrm{incl}_{X})_{\ast}$ is surjective, which implies the exactness
of the short sequence in the statement. In case $Y$ is a strong completing
manifold, we obtain a section of $(\mathrm{incl}_{X})_{\ast}$ as the
composition $\varphi_{\ast}\circ\sigma\circ(\varphi_{\ast}^{rel})^{-1}$, where
$\sigma:H_{\cdot}(Y,Y\setminus L)\rightarrow H_{\cdot}(Y)$ is the section of
$\mathrm{incl}_{\ast}$ constructed in Remark~\ref{rmk:perfect}.
\end{proof}

The distinction between strong completing manifolds and completing manifolds
is null if one restricts to field coefficients. In this situation all short
exact sequences split and thus the direct sum decomposition in
Lemma~\ref{lem:completing} is automatic.

\begin{proof}
[Proof of Theorem~\ref{thm:HPn}]Recall the map $\varphi_{k}:Y_{k}
\to\mathcal{P}_{n}$ and the submanifold $L_{k}\subset Y_{k}$
from~\eqref{eq:varphik} and~\eqref{eq:KkYk}. We now show that the triple
$(Y_{k},L_{k},\varphi_{k})$ is a strong completing manifold for $K_{k}$, from
which the Theorem follows in view of Lemma~\ref{lem:completing}.

The map $\varphi_{k}$ correctly sends $Y_{k}$ into $\mathcal{P}_{n}^{\leq
k\pi/2}$. To check Condition~\eqref{item:embedding} in
Definition~\ref{def:completing}, let us first note that the map $\varphi_{k}$
is an embedding outside the locus of points in $Y_{k}$ for which at least one
of the circles $C_{x_{j},v_{j} ,\theta_{j}}$ is constant, and in particular
$\varphi_{k}$ is an embedding near $L_{k}$. Secondly, the map $\varphi_{k}$
sends by definition $L_{k}$ diffeomorphically onto $K_{k}$. Finally, it
follows from the definition of $\varphi_{k}$ that $\varphi_{k}^{-1}
(\mathcal{P}_{n}^{=k\pi/2})$ consists of $k$-tuples of great half-circles of
length $\pi/2$ with matching endpoints, and therefore the only critical points
at level $k\pi/2$ lie in $L_{k}$.

To check Condition~\eqref{item:retraction} in Definition~\ref{def:completing}
we consider the map
\[
p_{k}:Y_{k}\to SN\mathbb{R}P^{n}, \qquad\big( (x_{0},v_{0},\theta_{0}
),\dots,(x_{k-1},v_{k-1},\theta_{k-1})\big)\longmapsto(x_{0},v_{0}).
\]
Via the identifications $SN\mathbb{R}P^{n}\equiv K_{k}\equiv L_{k}$ we can
view this map as $p_{k}:Y_{k}\to L_{k}$. Its composition with the inclusion
$L_{k}\hookrightarrow Y_{k}$ is clearly the identity map of $L_{k}$, hence
$p_{k}$ is a retraction of $Y_{k}$ onto $L_{k}$.
\end{proof}

\begin{remark}
\label{rmk:pk} The map $p_{k}:Y_{k}\to SN\mathbb{R}P^{n}$ actually defines a
fiber bundle structure on $Y_{k}$. To see this we note that $p_{k}$ can be
written as a composition
\[
Y_{k}\to Y_{k-1}\to\ldots\to Y_{1}\to SN\mathbb{R}P^{n}.
\]
Here $Y_{j}\to Y_{j-1}$, $2\le j\le k$ is the projection on the first
$(j-1)$-components of any $j$-tuple. This defines a fiber bundle structure on
$Y_{j}$ with base $Y_{j-1}$ and fiber $S^{n-1}\times S^{1}_{\pi}$, where
$S^{n-1}$ is the sphere of radius $1$ in $\mathbb{R}^{n}$. The map $Y_{1}\to
SN\mathbb{R}P^{n}$ is the projection on the first component, recalling that
$Y_{1}= SN\mathbb{R}P^{n}\times S^{1}_{\pi}$. As a consequence, $p_{k}$ is a
fiber bundle with fiber $(S^{n-1})^{k-1}\times(S^{1}_{\pi})^{k}$.
\end{remark}

The following statement is a rephrasing of Theorem~\ref{thm:HPn}.

\begin{corollary}
\label{cor:HPn} Denote $o_{\nu^{-}_{k}}$, $k\ge1$ the local system of
orientations for the negative bundle to $K_{k}$ and view this as a local
system on $ST\mathbb{R}P^{n}$ via the identifications $ST\mathbb{R}P^{n}\equiv
SN\mathbb{R}P^{n}\equiv K_{k}$. We have an isomorphism of graded 
$\mathbb{Z}$-modules
\begin{equation}
\label{eq:HPn-cor}H_{\cdot}(\mathcal{P}_{n})\simeq H_{\cdot}(\mathbb{R}
P^{n})\oplus\bigoplus_{k\ge1} 
H_{\cdot}(ST\mathbb{R}P^{n};o_{\nu^{-}_{k}})[-1-(k-1)n].
\end{equation}

\end{corollary}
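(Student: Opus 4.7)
The plan is to read off the corollary as a direct unpacking of Theorem~\ref{thm:HPn} by applying the Morse--Bott Thom isomorphism to each relative summand and then transporting everything onto $ST\mathbb{R}P^n$.

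First, Theorem~\ref{thm:HPn} already supplies the splitting
\[
H_{\cdot}(\mathcal{P}_{n})\simeq H_{\cdot}(\mathbb{R}P^{n})\oplus\bigoplus_{k\geq1}H_{\cdot}(\mathcal{P}_{n}^{\leq k\pi/2},\mathcal{P}_{n}^{<k\pi/2}),
\]
so it remains to identify each summand for $k\ge 1$. Since by Lemma~\ref{lem:index+nullity} the critical manifold $K_k$ is Morse--Bott nondegenerate of index $\iota(K_k)=1+(k-1)n$, the standard Morse--Bott Thom isomorphism of~\cite{Bott-nondegenerate} (exactly the last arrow in the chain appearing in the proof of Lemma~\ref{lem:completing}) gives a canonical isomorphism
\[
H_{\cdot}(\mathcal{P}_{n}^{\leq k\pi/2},\mathcal{P}_{n}^{<k\pi/2})\;\simeq\;H_{\cdot-\iota(K_k)}(K_k;o_{\nu^-_k})\;=\;H_{\cdot-1-(k-1)n}(K_k;o_{\nu^-_k}).
\]

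Second, I would transport this identification onto $ST\mathbb{R}P^n$ using the two diffeomorphisms introduced in \S\ref{sec:homologyPn}: the identification $\varphi_k:SN\mathbb{R}P^n\xrightarrow{\simeq} K_k$, $(x,v)\mapsto\gamma_{x,v}|_{[0,k\pi/2]}$, and the complex-structure diffeomorphism $SN\mathbb{R}P^n\xrightarrow{\simeq}ST\mathbb{R}P^n$, $(x,v)\mapsto(x,-Iv)$. By definition of $o_{\nu^-_k}$ as a local system on $ST\mathbb{R}P^n$ (the pullback of the orientation local system of the negative bundle of $K_k$ along these identifications), we obtain
\[
H_{\cdot-1-(k-1)n}(K_k;o_{\nu^-_k})\;\simeq\;H_{\cdot-1-(k-1)n}(ST\mathbb{R}P^n;o_{\nu^-_k}).
\]
Using the grading shift convention $V[m]_j=V_{j+m}$, the right-hand side is precisely $H_{\cdot}(ST\mathbb{R}P^n;o_{\nu^-_k})[-1-(k-1)n]_{\cdot}$.

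Finally, summing the contribution of $K_0=\mathbb{R}P^n$ (index $0$, no shift) with the contributions of all $K_k$, $k\ge1$, yields the formula~\eqref{eq:HPn-cor}. The only non-routine point is the bookkeeping of the local system: one has to verify that the orientation local system of the negative normal bundle of $K_k$ in $\mathcal{P}_n$ corresponds under the diffeomorphisms above to a well-defined local system on $ST\mathbb{R}P^n$, which is automatic since $\varphi_k$ and the map $v\mapsto -Iv$ are smooth diffeomorphisms. The splitting is then as graded $\mathbb{Z}$-modules because Theorem~\ref{thm:HPn} holds with arbitrary coefficients, and each Thom isomorphism is natural with respect to $\mathbb{Z}$-coefficients once the local system $o_{\nu^-_k}$ is taken into account.
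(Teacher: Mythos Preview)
Your proposal is correct and matches the paper's approach exactly: the paper treats Corollary~\ref{cor:HPn} as an immediate rephrasing of Theorem~\ref{thm:HPn} (with no proof beyond a $\square$), and your argument simply makes explicit the two ingredients the paper leaves implicit---the Morse--Bott Thom isomorphism $H_{\cdot}(\mathcal{P}_n^{\le k\pi/2},\mathcal{P}_n^{<k\pi/2})\simeq H_{\cdot-\iota(K_k)}(K_k;o_{\nu_k^-})$ already used in the proof of Lemma~\ref{lem:completing}, and the transport along the diffeomorphisms $K_k\simeq SN\mathbb{R}P^n\simeq ST\mathbb{R}P^n$ introduced earlier in the section.
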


\vspace{-.4cm}\hfill{$\square$}

We will now compute these graded $\mathbb{Z}$-modules explicitly.

\begin{lemma}
\label{lem:localsystems} \quad

(a.1) If $n$ is odd, the local system $o_{\nu_{k}^{-}}$ is trivial for all
$k\geq1$.

(a.2) If $n$ is even, the local system $o_{\nu_{k}^{-}}$ is trivial for odd
$k\geq1$, and it is nontrivial for even $k\geq2$. In the second case we have
\begin{equation}
\label{eq:X}o_{\nu_{k}^{-}}=\pi^{\ast}o,
\end{equation}
where $o$ is the orientation local system on $\mathbb{R}P^{n}$.

(b.1) If $n$ is odd, then
\begin{equation}
\label{eq:Y}H_{\cdot}(ST\mathbb{R}P^{n};\mathbb{Z})=H_{\cdot}(\mathbb{R}
P^{n};\mathbb{Z})\otimes H_{\cdot}(S^{n-1};\mathbb{Z}).
\end{equation}

(b.2) If $n$ is even, then
\begin{equation}
\label{eq:Z}H_{\cdot}(ST\mathbb{R}P^{n};\mathbb{Z})=\left\{
\begin{array}
[c]{ll}
H_{\cdot}(\mathbb{R}P^{n};\mathbb{Z}), & \mbox{in degrees }\leq n-2,\\
\mathbb{Z}/4, & \mbox{in degree }n-1,\\
H_{\cdot}(\mathbb{R}P^{n};o)[-n+1], & \mbox{in degrees lying in }\{n,\dots
,2n-1\},
\end{array}
\right.
\end{equation}
and
\begin{equation}
\label{eq:W}H_{\cdot}(ST\mathbb{R}P^{n};\pi^{\ast}o)=H_{\cdot}(\mathbb{R}
P^{n};o)\oplus H_{\cdot}(\mathbb{R}P^{n};\mathbb{Z})[-n+1].
\end{equation}

\end{lemma}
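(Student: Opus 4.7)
For part (a), I first invoke the proof of Lemma~\ref{lem:completing}, which identifies the negative bundle $\nu_{k}^{-}$ on $K_k$ with the normal bundle $\nu_{L_k/Y_k}$ on $L_k$ under the diffeomorphism $\varphi_k|_{L_k}$. Because $L_k$ is the image of a section of the fiber bundle $p_k: Y_k \to SN\mathbb{R}P^n$ (Remark~\ref{rmk:pk}), this normal bundle coincides with the restriction of the vertical tangent bundle $T^{\mathrm{vert}} p_k$ to $L_k$. Reading off the parametrization of the fiber, $T^{\mathrm{vert}}p_k|_{L_k}$ splits as $k$ trivial line bundles corresponding to the angular coordinates $\theta_j$ together with $k-1$ rank-$(n-1)$ bundles $T_{v_j}(SN_{x_j}\mathbb{R}P^n)$ for $j = 1, \dots, k-1$. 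The tautological relation $T_{v_j}(SN_{x_j}\mathbb{R}P^n) \oplus \mathbb{R} v_j \simeq f_j^{*} N\mathbb{R}P^n$ with $f_j(x,v) = x_j$, together with (i) the global triviality of $\mathbb{R} v_j$ (trivialized by the tautological section $v_j$) and (ii) the homotopy $f_j \simeq \pi$ furnished by the geodesic family $(x,v) \mapsto \exp_x(-tIv)$, $t \in [0, \pi/2]$, yields $w_1(\nu_k^{-}) = (k-1)\, \pi^{*} w_1(\mathbb{R}P^n) \in H^1(SN\mathbb{R}P^n; \mathbb{Z}/2)$. The four cases of (a.1) and (a.2) follow immediately from the parities of $k-1$ and $n$, and in the nontrivial case the identification $o_{\nu_k^{-}} = \pi^{*} o$ is automatic from equality of monodromies.

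For part (b) I run the Leray-Serre spectral sequence of the sphere bundle $S^{n-1} \to ST\mathbb{R}P^n \to \mathbb{R}P^n$, noting that the monodromy of $\mathcal{H}^{n-1}(S^{n-1}; \mathbb{Z})$ equals the orientation character $o$ of $T\mathbb{R}P^n$. When $n$ is odd, $\chi(\mathbb{R}P^n) = 0$ makes the Euler class vanish and furnishes a section of $ST\mathbb{R}P^n \to \mathbb{R}P^n$, so Gysin splits into short exact sequences $0 \to H^i(\mathbb{R}P^n) \to H^i(ST\mathbb{R}P^n) \to H^{i-n+1}(\mathbb{R}P^n) \to 0$; Poincar\'e duality on the closed orientable manifold $ST\mathbb{R}P^n$ (orientability via $T(ST\mathbb{R}P^n) = T^{\mathrm{vert}} \oplus \pi^{*} T\mathbb{R}P^n$ and the telescoping $w_1(T^{\mathrm{vert}}) = \pi^{*}w_1(\mathbb{R}P^n)$) delivers~\eqref{eq:Y}. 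For the twisted version~\eqref{eq:W} with $n$ even, the $E_2$ page becomes $E_2^{p,0} = H^p(\mathbb{R}P^n; o)$ and $E_2^{p, n-1} = H^p(\mathbb{R}P^n; \mathbb{Z})$ (using $o \otimes o = \mathbb{Z}$), and the only nonzero component of $d_n$, the map $H^0(\mathbb{R}P^n; \mathbb{Z}) \to H^n(\mathbb{R}P^n; o)$, is the isomorphism given by multiplication by $\chi = 1$; this kills both corner entries, all remaining groups survive with no extension issue, and~\eqref{eq:W} is read off.

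The subtle case is~\eqref{eq:Z}, $n$ even with trivial coefficients. Here $e \in H^n(\mathbb{R}P^n; o)$ is a generator but the Gysin multiplication $\cup e : H^p(\mathbb{R}P^n; o) \to H^{p+n}(\mathbb{R}P^n; \mathbb{Z})$ vanishes identically for degree reasons (source $0$ at $p = 0$, target $0$ for $p \geq 1$), producing short exact sequences
\[
0 \to H^i(\mathbb{R}P^n; \mathbb{Z}) \to H^i(ST\mathbb{R}P^n; \mathbb{Z}) \to H^{i-n+1}(\mathbb{R}P^n; o) \to 0.
\]
All $i$ are elementary except $i = n$, where both outer terms are $\mathbb{Z}/2$ and one must decide between $\mathbb{Z}/4$ and $(\mathbb{Z}/2)^{2}$. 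To pin down the extension I compute $H^n(ST\mathbb{R}P^n; \mathbb{Z}/2)$ via the mod-$2$ Gysin sequence: since $w_n(T\mathbb{R}P^n) = \alpha^n$ is nonzero, $\cup w_n: H^0(\mathbb{R}P^n; \mathbb{Z}/2) \to H^n(\mathbb{R}P^n; \mathbb{Z}/2)$ is an isomorphism, and the remaining terms yield $H^n(ST\mathbb{R}P^n; \mathbb{Z}/2) = \mathbb{Z}/2$. Combined with the fact that $H^{n+1}(ST\mathbb{R}P^n; \mathbb{Z}) = H^2(\mathbb{R}P^n; o)$ contributes trivial $\mathrm{Tor}$ against $\mathbb{Z}/2$ (being $0$ for $n \geq 4$ and $\mathbb{Z}$ for $n = 2$), the universal coefficient theorem forces $H^n(ST\mathbb{R}P^n; \mathbb{Z}) \otimes \mathbb{Z}/2 = \mathbb{Z}/2$, ruling out $(\mathbb{Z}/2)^{2}$ and giving $\mathbb{Z}/4$. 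Poincar\'e duality then converts the cohomology computation into the homology statement~\eqref{eq:Z}. The hardest step throughout is precisely this identification of the order-$4$ extension, whose mod-$2$ resolution forms the technical core of the argument.
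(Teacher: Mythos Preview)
Your arguments for parts (a), (b.1), and the untwisted half of (b.2) are correct and follow essentially the same route as the paper. For (a) your Stiefel--Whitney computation $w_1(\nu_k^-)=(k-1)\,\pi^* w_1(\mathbb{R}P^n)$ is exactly the content of the alternative proof the paper records in the Remark immediately following the lemma; the paper's primary argument instead checks orientability of a tubular neighbourhood $\widetilde Y_k$ of $L_k$ directly along an explicit loop generating $\pi_1(ST\mathbb{R}P^n)$. For (b.1) and for the $\mathbb{Z}/4$ extension in \eqref{eq:Z} both you and the paper run Leray--Serre/Gysin and resolve the extension via the mod-$2$ Euler class; your only variation is to work in cohomology and invoke Poincar\'e duality at the end.

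There is, however, a genuine inconsistency in your treatment of \eqref{eq:W}. You correctly observe that the transgression $d_n\colon H^0(\mathbb{R}P^n;\mathbb{Z})\to H^n(\mathbb{R}P^n;o)$ is cup product with the twisted Euler class and hence an isomorphism (since $\chi(\mathbb{R}P^n)=1$), and you say this ``kills both corner entries''. But then the surviving $E_\infty$ page no longer reassembles to $H_\cdot(\mathbb{R}P^n;o)\oplus H_\cdot(\mathbb{R}P^n;\mathbb{Z})[-n+1]$: the two free $\mathbb{Z}$ summands, sitting in homological degrees $n$ and $n-1$, have just been destroyed, and what remains is pure torsion. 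A transfer check confirms this: for the double cover $STS^n\to ST\mathbb{R}P^n$ one has $H_\cdot(STS^n;\mathbb{Q})\simeq H_\cdot(ST\mathbb{R}P^n;\mathbb{Q})\oplus H_\cdot(ST\mathbb{R}P^n;(\pi^*o)\otimes\mathbb{Q})$, and since both $STS^n$ and $ST\mathbb{R}P^n$ are rational homology $(2n{-}1)$-spheres the twisted summand must vanish rationally. So your computation of $d_n$ is right but your conclusion contradicts it. In fact the paper's own proof contains the complementary error, asserting that $d^n$ ``necessarily vanishes due to the specific values of the homology groups'' --- this overlooks the component $H_n(\mathbb{R}P^n;o)=\mathbb{Z}\to H_0(\mathbb{R}P^n;\mathbb{Z})=\mathbb{Z}$. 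The formula \eqref{eq:W} as printed appears to be incorrect; fortunately it plays no role in the paper's main theorem, which uses only $\mathbb{Z}/2$-coefficients.
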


\begin{proof}
The homology of $ST\mathbb{R}P^{n}$ with coefficients in a local system which
is pulled back from $\mathbb{R}P^{n}$ can be computed using the Leray-Serre
spectral sequence for the bundle $S^{n-1}\hookrightarrow ST\mathbb{R}
P^{n}\overset{\pi}{\longrightarrow}\mathbb{R}P^{n}$. Before discussing
nontrivial local systems on $ST\mathbb{R}P^{n}$, we will compute $H_{\cdot
}(ST\mathbb{R}P^{n};\mathbb{Z})$.

Let $n$ be odd and consider the spectral sequence for $H_{\cdot}
(ST\mathbb{R}P^{n};\mathbb{Z})$. The second page is $H_{\cdot}(\mathbb{R}
P^{n};\mathbb{Z})\otimes H_{\cdot}(S^{n-1};\mathbb{Z})$ and $d^{n}=0$ since
the Euler class of $\mathbb{R}P^{n}$ is zero. The differentials $d^{r}$,
$r\neq n$ vanish for dimension reasons. Recalling that the integral homology
of $\mathbb{R}P^{n}$ in (ascending) degree $\leq n$ is $H_{\cdot}(\mathbb{R}
P^{n};\mathbb{Z})=(\mathbb{Z},\mathbb{Z}/2,0,\dots,\mathbb{Z}/2,0,\mathbb{Z})$
and using that $Ext(\mathbb{Z},\mathbb{Z}/2)=0$ we obtain~\eqref{eq:Y}.

Now let $n$ be even and consider the spectral sequence for $H_{\cdot
}(ST\mathbb{R}P^{n};\mathbb{Z})$. The second page is $H_{\cdot}(\mathbb{R}
P^{n};\mathbb{Z})\oplus H_{\cdot}(\mathbb{R}P^{n};o)[-n+1]$. The integral
homology of $\mathbb{R}P^{n}$ in (ascending) degree $\leq n$ is $H_{\cdot
}(\mathbb{R}P^{n};\mathbb{Z})=(\mathbb{Z},\mathbb{Z}/2,0,\dots,\mathbb{Z}
/2,0)$, whereas the homology with coefficients in $o$ in (ascending) degree
$\leq n$ is $(\mathbb{Z}/2,0,\dots,\mathbb{Z}/2,0,\mathbb{Z})$. Here we use
the Poincar\'{e} duality isomorphism $H_{\cdot}(\mathbb{R}P^{n};o)\simeq
H^{n-\cdot}(\mathbb{R}P^{n};\mathbb{Z})$. The differential $d^{n}$ is thus $0$
since all its components have either vanishing source or vanishing target,
whereas the differentials $d^{r}$, $r\neq n$ vanish for dimension reasons.
This determines unambiguously the homology $H_{\cdot}(ST\mathbb{R}P^{n}
;\mathbb{Z})$ in all degrees except $n-1$, where we know that it is a
$\mathbb{Z}/2$-extension of $\mathbb{Z}/2$. By writing the spectral sequence
with $\mathbb{Z}/2$-coefficients and using that the Euler class of
$\mathbb{R}P^{n}$ is equal to $1$, we obtain that $H_{n-1}(ST\mathbb{R}
P^{n};\mathbb{Z}/2)=\mathbb{Z}/2$, and thus $H_{n-1}(ST\mathbb{R}
P^{n};\mathbb{Z})=\mathbb{Z}/4$. Thus we have~\eqref{eq:Z}.

In particular, we obtain that the fundamental group of $ST\mathbb{R}P^{n}$ is
\[
\pi_{1}(ST\mathbb{R}P^{n})=\left\{
\begin{array}
[c]{c}
\mathbb{Z}/4\text{ \ if \ }n=2\\
\mathbb{Z}/2\text{ \ if \ }n\geq3
\end{array}
\right\}  .
\]
Indeed, the fundamental group is abelian since $ST\mathbb{R}P^{n}$ is a
$\mathbb{Z}/2$-quotient of $STS^{n}$, which has cyclic fundamental group, and
is therefore isomorphic to $H_{1}(ST\mathbb{R}P^{n};\mathbb{Z})$.

We now move on to nontrivial systems on $ST\mathbb{R}P^{n}$. Note that
$\mathbb{R}P^{n}$, $n\geq1$ carries a unique nontrivial local system, denoted
$\tau$, because its fundamental group has a single generator (of even order).
If $n$ is even, $\mathbb{R}P^{n}$ is not orientable and thus $\tau=o$. When
$n$ is odd, $o$ is trivial. While on $ST\mathbb{R}P^{1}=S^{1}\sqcup S^{1}$
there are two nontrivial local systems (modulo diffeomorphisms), on
$ST\mathbb{R}P^{n}$, $n\geq2$ there is a unique nontrivial local system since
the fundamental group still has a single generator which has even order. The
map $\pi_{\ast}$ sends a generator of the fundamental group of the total space
of the bundle to a generator of the fundamental group of the base. Thus the
unique notrivial local system on $ST\mathbb{R}P^{n}$, $n\geq2$ is equal to
$\pi^{\ast}\tau$.

Now let $n$ be even and consider the spectral sequence for $H_{\cdot
}(ST\mathbb{R}P^{n};\pi^{\ast}o)$. The second page is $H_{\cdot}
(\mathbb{R}P^{n};o)\oplus H_{\cdot}(\mathbb{R}P^{n};o\otimes o)[-n+1]$. Note
that $o\otimes o$ is the trivial local system $\mathbb{Z}$. The differential
$d^{n}$ necessarily vanishes due to the specific values of the homology
groups, the differentials $d^{r}$, $r\neq n$ vanish for dimension reasons, and
there are no extension issues since $Ext(\mathbb{Z},\mathbb{Z}/2)=0$. Thus we have~\eqref{eq:W}.

We now prove statements \emph{(a.1)} and~\emph{(a.2)}. Note that~\eqref{eq:X}
is a direct consequence of the fact that the unique nontrivial local system on
$ST\mathbb{R}P^{n}$, $n\ge2$ is $\pi^{*}\tau$, and $\tau=o$ is $n$ is even.
Recall for the proof that we have denoted $L_{k}\subset Y_{k}$ the
diffeomorphic image of $K_{k}$ under the embedding~\eqref{eq:KkYk}. We view
$o_{\nu_{k}^{-}}$ as the local system of orientations for the normal bundle
$\nu_{k}$ to $L_{k}$ in $Y_{k}$ and denote it as such by $o_{\nu_{k}}$. We
identify $L_{k}$ with $ST\mathbb{R}P^{n}$ as above.

The manifold $\mathbb{R}P^{n}$ is orientable if and only if $n$ is odd, but
$ST\mathbb{R}P^{n}$ is always orientable. Let $\widetilde{Y}_{k}$ be an open
neighborhood of $L_{k}$ in $Y_{k}$ that retracts onto $L_{k}$ and is small
enough to be a manifold. Then $o_{\nu_{k}}$ is trivial if and only if
$\widetilde{Y}_{k}$ is orientable, and it is sufficient to check the value of
the orientation system of $\widetilde{Y}_{k}$ on the image of one of the
generators of $\pi_{1}(ST\mathbb{R}P^{1})$. We will use the generator
represented by the tangent vector to the loop (in homogeneous coordinates)
$\gamma(t)=[\cos t:\sin t:0:...:0]$, $0\leq t\leq\pi$ in $\mathbb{R}P^{n}$.
Recall that $\widetilde{Y}_{k}$ is parametrized by sequences 
$(x_{0},...,x_{k})\in(\mathbb{R}P^{n})^{k+1}$ where the $x_{i}$ are the endpoints of
the $k$ half-circles making up a path in the image of $\widetilde{Y}_{k}$. For
each $t\in[0,\pi]$ the element of $L_{k}$ corresponding to the tangent vector
$\dot\gamma(t)\in ST\mathbb{R}P^{n}$ is parametrized by the sequence
\[
(x_{0},...,x_{k})(t)=(\gamma(t),\dot\gamma(t),\gamma(t),\dot\gamma
(t),...)\in(\mathbb{R}P^{n})^{k+1}.
\]
Note that in each case the path $x_{i}(t)$ represents a generator of $\pi
_{1}\mathbb{R}P^{n}$. The value of the orientation system $o^{\otimes k+1}$ on
$(\mathbb{R}P^{n})^{k+1}$ on this loop in $ST\mathbb{R}P^{n}$ is $-1$ if and
only if $n$ and $k$ are even. Thus we have~\emph{(a.1)} and~\emph{(a.2)}.
\end{proof}

\begin{remark}
Here is an alternative proof of statements~\emph{(a.1)} and~\emph{(a.2)}. We
use the global involution
\[
A:Y_{k+1}\to Y_{k+1}
\]
described heuristically as reversing the time direction on the path
represented by an element of $Y_{k+1}$. The involution $A$ is a diffeomorphism
which acts by sending a $(k+1)$-tuple $\big((x_{j},v_{j},\theta_{j}
)\big)_{0\le j\le k}$ to $\big((x^{\prime}_{j},v^{\prime}_{j},\theta^{\prime
}_{j})\big)_{0\le j\le k}$, with $(x^{\prime}_{j},v^{\prime}_{j})$ the
\emph{opposite} of the endpoint speed vector of $C_{x_{k-j},v_{k-j}
,\theta_{k-j}}$ and $\theta_{j}:=-\theta_{k-j}$. Our convention is that the
endpoint speed vector of a constant circle $C_{x_{k-j},v_{k-j},0}$ is
$(x_{k-j},-v_{k-j})$, so that the endpoint speed vector defines a smooth
function on $Y_{1}$. In particular $(x^{\prime}_{j},v^{\prime}_{j}
)=(x_{k-j},v_{k-j})$. Denoting by $\mathrm{ev}_{0}$, $\mathrm{ev}_{1}
:Y_{k+1}\to\mathbb{R}P^{n}$ the evaluation maps at the endpoints, we have
\[
\mathrm{ev}_{1}=\mathrm{ev}_{0}\circ A.
\]

To prove~\emph{(a.1)} and~\emph{(a.2)} we think of $L_{k}$ as being identified
with $SN\mathbb{R}P^{n}$ as above. We also recall the notation $\nu_{k}$ for
the normal bundle to $L_{k}$ in $Y_{k}$. Since $Y_{1}=SN\mathbb{R}P^{n}\times
S^{1}_{\pi}$ we obtain that $\nu_{1}$ is trivial, and so is $o_{\nu_{1}}$.

Let now $k\ge2$. Write $Y_{k}= Y_{k-1} \, {_{\mathrm{ev}_{1}}}\!\!\times
_{\mathrm{ev}_{0}} Y_{1}$ and embed $Y_{k-1}\hookrightarrow Y_{k}$ by adding
to any $(k-1)$-tuple $\big((x_{j},v_{j},\theta_{j})\big)_{0\le j\le k-2}$ the
geodesic half-circle $(x_{k-1},v_{k-1},\pi/2)$, where $(x_{k-1},v_{k-1})$ is
the endpoint speed vector of $C_{x_{k-2},v_{k-2},\theta_{k-2}}$. This
embedding is a section for the fiber bundle $Y_{k}\to Y_{k-1}$. The normal
bundle to $Y_{k-1}$ in $Y_{k}$ under this embedding then satisfies
\[
\nu_{_{Y_{k}}} Y_{k-1} \oplus\mathbb{R} = (\mathrm{ev}_{1})^{*} (N\mathbb{R}
P^{n}\oplus\mathbb{R}).
\]
In this equality we think of the trivial rank one factor on the left hand side
as being generated by the section $v_{k-1}$ along $Y_{k-1}$, and use the
identity $N_{x_{k-1}} \mathbb{R}P^{n}=\langle v_{k-1}\rangle\oplus 
T_{v_{k-1}}SN_{x_{k-1}}\mathbb{R}P^{n}$. The trivial rank one factor on the right hand
side corresponds to varying the argument $\theta_{k-1}\in S^{1}_{\pi}$. We
obtain at the level of orientation local systems
\[
o_{\nu_{_{Y_{k}}} Y_{k-1}} = (\mathrm{ev}_{1})^{*} o_{N\mathbb{R}P^{n}} =
(\mathrm{ev}_{1})^{*} o_{T\mathbb{R}P^{n}}.
\]
Since $\mathrm{ev}_{1}=\mathrm{ev}_{0}\circ A$ and since we only consider
local systems up to diffeomorphisms, we can alternatively write
\[
o_{\nu_{_{Y_{k}}} Y_{k-1}} = (\mathrm{ev}_{0})^{*} o_{T\mathbb{R}P^{n}}.
\]
By induction we then obtain
\[
o_{\nu_{k}} = (\mathrm{ev}_{0})^{*} (o_{T\mathbb{R}P^{n}})^{\otimes k-1}.
\]
Note that $\mathrm{ev}_{0}=\pi$ on $Y_{1}$. Assertions~\emph{(a.1)}
and~\emph{(a.2)} then follow from the fact that $o_{T\mathbb{R}P^{n}}$ is
trivial if $n$ is odd (orientable case), respectively nontrivial if $n$ is
even (nonorientable case).
\end{remark}


\section{The Pontryagin-Chas-Sullivan product}

\label{sec:product}

In this section we restrict to homology with $\mathbb{Z}/2$-coefficients. Our
motivation is that the Pontryagin-Chas-Sullivan product extends the
intersection form on the homology of $\mathbb{R}P^{n}$, and the latter is most
natural with $\mathbb{Z}/2$-coefficients. Our purpose is to prove
Theorem~\ref{thm:PCSproduct}.

Formula~\eqref{eq:HPn-cor} simplifies to
\begin{equation}
H_{\cdot}(\mathcal{P}_{n};\mathbb{Z}/2)\simeq H_{\cdot}(\mathbb{R}
P^{n};\mathbb{Z}/2)\oplus\bigoplus_{k\ge1} H_{\cdot}(ST\mathbb{R}
P^{n};\mathbb{Z}/2)[-1-(k-1)n] \label{eq:HPn-Z2}
\end{equation}
since all local systems become trivial over $\mathbb{Z}/2$. The second page of
the spectral sequence for computing $H_{\cdot}(ST\mathbb{R}P^{n}
;\mathbb{Z}/2)$ simplifies to $H_{\cdot}(\mathbb{R}P^{n};\mathbb{Z}/2)\otimes
H_{\cdot}(S^{n-1};\mathbb{Z}/2)$, i.e.
\[
\xymatrix
@R=5pt { \mbox{\tiny$n-1$}  &  \mathbb{Z}/2  &  \mathbb{Z}/2  &  \ldots &
\mathbb{Z}/2  &  \mathbb{Z}/2 \\ \qquad\\ \qquad\\
\mbox{\tiny$0$}  &  \mathbb{Z}/2  &  \mathbb{Z}/2  &  \ldots &  \mathbb{Z}/2
&  \mathbb{Z}/2 \ar[uuullll]_{\cdot e} \\  &  \mbox{\tiny$0$}  &
\mbox{\tiny$1$}  &  \ldots &  \mbox{\tiny$n-1$}  &  \mbox{\tiny$n$} }
\]
The only depicted differential is multiplication by the Euler number modulo
$2$, which vanishes for $n$ odd and is an isomorphism for $n$ even. Thus for
$n$ odd
\[
H_{\cdot}(ST\mathbb{R}P^{n};\mathbb{Z}/2)=H_{\cdot}(\mathbb{R}P^{n}
;\mathbb{Z}/2)\otimes H_{\cdot}(S^{n-1};\mathbb{Z}/2),
\]
whereas for $n$ even
\[
H_{\cdot}(ST\mathbb{R}P^{n};\mathbb{Z}/2)=\left\{
\begin{array}
[c]{ll}
\mathbb{Z}/2, & \mbox{in degrees lying in }\{0,\dots, 2n-1\},\\
0, & \mbox{else.}
\end{array}
\right.
\]

\noindent\textbf{Convention.} In the rest of this section we shall not include
anymore the coefficient ring $\mathbb{Z}/2$ in the notation for homology
groups. As an example, we shall write $H_{\cdot}(ST\mathbb{R}P^{n})$ instead
of $H_{\cdot}(ST\mathbb{R}P^{n};\mathbb{Z}/2)$.

It is convenient to write the homology $H_{\cdot}(\mathcal{P}_{n})$ as given
by Theorem~\ref{thm:HPn} and formula~\eqref{eq:HPn-Z2} in a table in which the
homological degree appears as the vertical coordinate, and the critical values
of the norm $F$ appear as the horizontal coordinate. \vspace{2cm}

\begin{center}
\begin{tabular}
[c]{c|ccccc}
$\vdots$ &  &  &  &  & \\
$\vdots$ &  &  &  &  & \\
$3n$ &  &  &  &  & \\
$\vdots$ &  &  &  &  & \\
$2n+1$ &  &  &  &  & \\
$2n$ &  &  &  &  & \\
$\vdots$ &  &  &  &  & \\
$n+1$ &  &  &  &  & \\
$n$ &  &  &  &  & \\
$\vdots$ &  &  &  &  & \\
$1$ &  &  &  &  & \\
$0$ &
\raisebox{0pt}[0pt][0pt]{\raisebox{40pt}{\dbox{$\xymatrix@R=8pt{ \\ H_\cdot(\mathbb{R}P^n) \\ \quad }$}}} &
\raisebox{0pt}[0pt][0pt]{\raisebox{83pt}{\dbox{$\xymatrix@R=24pt{ \\ H_\cdot(ST\mathbb{R}P^n) \\ \quad }$}}} &
\raisebox{0pt}[0pt][0pt]{\raisebox{125pt}{\dbox{$\xymatrix@R=24pt{ \\ H_\cdot(ST\mathbb{R}P^n) \\ \quad }$}}} &
\raisebox{0pt}[0pt][0pt]{\raisebox{167pt}{\dbox{$\xymatrix@R=24pt{ \\ H_\cdot(ST\mathbb{R}P^n) \\ \quad }$}}} &
\\\hline
& $0$ & $\frac\pi2$ & $\pi$ & $\frac{3\pi} 2$ & \dots
\end{tabular}
\end{center}

\subsection{Generalities.}

\label{sec:gen}

Our computation of the multiplicative structure of the ring $(H_{\cdot
}(\mathcal{P}_{n}),\ast)$ makes use of several general principles which are of
larger interest and which we now emphasize. Whereas the discussion in (1--3)
below is valid for arbitrary path spaces, item (4) is specific to the pair
$(\mathbb{C}P^{n},\mathbb{R}P^{n})$.

\smallskip\emph{(1) Geometric realization of product classes.}

For the next Lemma we recall the codimension $n$ submanifold 
$\mathcal{C}_{n}\subset\mathcal{P}_{n}\times\mathcal{P}_{n}$ and the concatenation map
$c:\mathcal{C}_{n}\to\mathcal{P}_{n}$.

\begin{lemma}
\label{lem:geom*} Let $[f],[g]\in H_{\cdot}(\mathcal{P}_{n})$ be two classes
which are respectively represented by $C^{1}$-maps $f:A\to\mathcal{P}_{n}$,
$g:B\to\mathcal{P}_{n}$ whose sources are closed finite-dimensional manifolds
$A$, $B$. Assume that the maps
\[
\mathrm{ev}_{1}\circ f:A\to\mathbb{R}P^{n},\qquad\mathrm{ev}_{0}\circ
g:B\to\mathbb{R}P^{n}
\]
are transverse. Denote their fiber product by
\[
C:= A \ _{\mathrm{ev}_{1}\circ f} {\times}_{\mathrm{ev}_{0}\circ g} \ B,
\]
with the natural map $(f,g):C\to\mathcal{C}_{n}$. Then
\[
[f]\ast[g]=[c\circ(f,g)].
\]

\end{lemma}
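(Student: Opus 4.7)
The plan is to unwind the definition
\[
[f] \ast [g] = c_{\ast}\, s_{!}\, AW([f]\otimes[g])
\]
on the geometric representatives and show that each of the three operations $AW$, $s_{!}$, $c_{\ast}$ acts as expected on a transverse pair.

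First I would handle the Alexander--Whitney step: for classes represented by maps from closed manifolds $A$ and $B$, the product $AW([f]\otimes[g])$ is represented by the product map $f\times g: A\times B \to \mathcal{P}_{n}\times\mathcal{P}_{n}$, $(a,b)\mapsto(f(a),g(b))$. This is a standard compatibility of $AW$ with the exterior homology product on chain-level representatives coming from smooth maps of closed manifolds, and with $\mathbb{Z}/2$-coefficients there is no sign or orientation issue.

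Next, the main content of the lemma is the shriek step. The transversality hypothesis on $\mathrm{ev}_{1}\circ f$ and $\mathrm{ev}_{0}\circ g$ is exactly equivalent to saying that $f\times g$ is transverse to the codimension-$n$ submanifold $\mathcal{C}_{n}=(\mathrm{ev}_{1},\mathrm{ev}_{0})^{-1}(\Delta)\subset\mathcal{P}_{n}\times\mathcal{P}_{n}$, and that the scheme-theoretic preimage $(f\times g)^{-1}(\mathcal{C}_{n})$ is precisely the smooth manifold $C = A\,{_{\mathrm{ev}_{1}\circ f}\!\times_{\mathrm{ev}_{0}\circ g}}\,B$. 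Moreover, the derivative of $f\times g$ induces a bundle isomorphism of normal bundles
\[
\nu_{C}(A\times B) \;\simeq\; (f,g)^{\ast}\,\nu(\mathcal{C}_{n}).
\]
From here I would invoke the ``transverse preimage formula'' for the shriek map: choose a tubular neighborhood of $\mathcal{C}_{n}$ in $\mathcal{P}_{n}\times\mathcal{P}_{n}$, pull it back via $f\times g$ to a tubular neighborhood of $C$ in $A\times B$, and use the naturality of the Thom class under the bundle map $\nu_{C}(A\times B)\to\nu(\mathcal{C}_{n})$ covering $(f,g)$. Naturality of excision and of the Thom isomorphism with respect to a bundle map that is an isomorphism on fibers then gives the commutative diagram
\[
\xymatrix@C=15pt{
H_{\cdot}(A\times B) \ar[r]\ar[d]_-{(f\times g)_{\ast}} &
H_{\cdot}(A\times B, A\times B\setminus C)\ar[d]^-{(f\times g)_{\ast}} \ar[r]^-{\tau^{-1}}_-{\simeq} &
H_{\cdot-n}(C)\ar[d]^-{(f,g)_{\ast}} \\
H_{\cdot}(\mathcal{P}_{n}^{2}) \ar[r] &
H_{\cdot}(\mathcal{P}_{n}^{2}, \mathcal{P}_{n}^{2}\setminus\mathcal{C}_{n}) \ar[r]^-{\tau^{-1}}_-{\simeq} &
H_{\cdot-n}(\mathcal{C}_{n}).
}
\]
Chasing the fundamental class $[A\times B]$ around the diagram, the top row sends $[A\times B]\mapsto[C]$ (since $C$ is a transverse zero-locus of codimension $n$), and functoriality of the bottom row yields $s_{!}[f\times g] = [(f,g):C\to\mathcal{C}_{n}]$.

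Finally, applying $c_{\ast}$ is tautological: $c_{\ast}[(f,g)] = [c\circ(f,g)]$, which gives the claimed formula. The only nontrivial step is the middle one, and the obstacle there is purely bookkeeping --- verifying that the tubular neighborhood can be chosen compatibly on both sides and that the Thom isomorphism is natural with respect to the induced bundle map. Everything else is formal manipulation of representatives.
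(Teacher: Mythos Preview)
Your proof is correct and follows the same approach as the paper's: identify $AW([f]\otimes[g])$ with the class of $f\times g$, observe that the transversality hypothesis is exactly transversality of $f\times g$ to $\mathcal{C}_{n}$ so that $s_{!}[f\times g]=[(f,g):C\to\mathcal{C}_{n}]$, and then apply $c_{\ast}$. The paper's proof is terser---it simply asserts the shriek identity---whereas you spell out the naturality-of-Thom diagram that justifies it, but the argument is the same.
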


\begin{proof}
Recall the Alexander-Whitney map $AW:H_{\cdot}(\mathcal{P}_{n})\otimes
H_{\cdot}(\mathcal{P}_{n})\to 
H_{\cdot}(\mathcal{P}_{n}\times\mathcal{P}_{n})$, the inclusion $s:\mathcal{C}_{n}\to\mathcal{P}_{n}\times\mathcal{P}_{n}$
and the fact that $[f]\ast[g]=c_{*}s_{!}A([f]\otimes[g])$. The transversality
assumption in the statement is equivalent to the fact that the map
$(f,g):A\times B\to\mathcal{P}_{n}\times\mathcal{P}_{n}$ is transverse to
$\mathcal{C}_{n}$. Thus $s_{!}([f,g)]=[(f,g):C\to\mathcal{C}_{n}]$. In view of
the equality $AW([f]\otimes[g])=[(f,g)]$, the Lemma follows.
\end{proof}

Here is a direct consequence of Lemma~\ref{lem:geom*}. Denote $[Y_{k}]\in
H_{(k+1)n}(\mathcal{P}_{n})$, $k\ge1$ the class represented by the
``evaluation map'' $\varphi_{k}:Y_{k}\to\mathcal{P}_{n}$. Then
\begin{equation}
\label{eq:YkY1k}[Y_{k}]=[Y_{1}]^{\ast k}.
\end{equation}
Indeed, one proceeds by induction on $k\ge1$ using Lemma~\ref{lem:geom*},
which can be applied since $\mathrm{ev}_{1}\circ\varphi_{k-1}:Y_{k-1}
\to\mathbb{R}P^{n}$ is a submersion. (The same holds, of course, for
$\mathrm{ev}_{0}\circ\varphi_{k-1}:Y_{k-1}\to\mathbb{R}P^{n}$.)

\bigbreak

\smallskip\emph{(2) Min-max critical levels.}

Let $X$ be a Hilbert manifold and $f:X\rightarrow\mathbb{R}$ a function of
class $C^{2}$ satisfying condition~(C) of Palais and Smale. Given a homology
class $\alpha\in H_{\cdot}(X)$, the ``minimax" \emph{ critical level of
$\alpha$ with respect to $f$} is
\[
\mathrm{Crit}(\alpha;f):=\min\{c\in\mathbb{R}\ :\ \alpha\in\mathrm{im}
\big(H_{\cdot}(\{f\leq c\})\rightarrow H_{\cdot}(X)\big)\}.
\]
If there is no danger of confusion, we shall write $\mathrm{Crit}(\alpha)$
instead of $\mathrm{Crit}(\alpha;f)$.

We clearly have
\begin{equation}
\label{eq:critalphabeta}\mathrm{Crit}(\beta)\leq\max(\mathrm{Crit}
(\alpha),\mathrm{Crit}(\alpha+\beta)).
\end{equation}

\begin{corollary}
\label{cor:crit1} \textit{Suppose }$H_{\cdot}(X)$ \textit{is spanned over
}$\mathbb{Z}/2$\textit{ by subspaces }$A$\textit{ and }$B$\textit{, and let
}$\delta\in H_{\cdot}(X)$\textit{. If there is a compact set }$Y\subset
X$\textit{ supporting }$A$\textit{ and }$\delta$\textit{, but no nonzero
element of }$B$\textit{, then }$\delta\in A$.
\end{corollary}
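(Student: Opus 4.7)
The plan is essentially a one-line linear-algebra argument, once the terminology is unpacked. First I would interpret ``$Y$ supports a class (or subspace)'' in the sense suggested by the definition of $\mathrm{Crit}(\alpha;f)$ just above: it means that the class (respectively, every element of the subspace) lies in the image of the map $\iota_{*}:H_{\cdot}(Y)\to H_{\cdot}(X)$ induced by the inclusion $\iota:Y\hookrightarrow X$. Thus the hypothesis reads $A\subseteq\mathrm{im}(\iota_{*})$, $\delta\in\mathrm{im}(\iota_{*})$, and $\mathrm{im}(\iota_{*})\cap B=\{0\}$.

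Next, using the spanning assumption $H_{\cdot}(X)=A+B$, I would write
\[
\delta=a+b,\qquad a\in A,\ b\in B.
\]
Since we work with $\mathbb{Z}/2$-coefficients, rewrite this as $b=\delta+a$. Both $\delta$ and $a$ lie in $\mathrm{im}(\iota_{*})$ by hypothesis, and $\mathrm{im}(\iota_{*})$ is a subgroup of $H_{\cdot}(X)$, so $b\in\mathrm{im}(\iota_{*})$. But by construction $b\in B$, so $b\in\mathrm{im}(\iota_{*})\cap B=\{0\}$. Hence $b=0$ and $\delta=a\in A$.

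There is essentially no obstacle here; the only thing worth flagging is the consistency of the word ``supports'' with the min-max definition given just above the corollary, and the harmless fact that the decomposition $\delta=a+b$ need not be unique (the argument does not require $A\cap B=0$, only that \emph{some} decomposition exists, which is exactly what ``spanned by $A$ and $B$'' provides). I would state the result and give the three-line proof in that order, without any further machinery.
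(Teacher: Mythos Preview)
Your proof is correct and is essentially the paper's argument stripped of its min-max packaging. The paper introduces an auxiliary smooth nonnegative function $\varphi$ with zero set $Y$, so that ``supported on $Y$'' becomes ``$\mathrm{Crit}(\cdot;\varphi)=0$'', and then invokes the inequality $\mathrm{Crit}(\beta)\le\max(\mathrm{Crit}(\alpha),\mathrm{Crit}(\alpha+\beta))$ to conclude $\mathrm{Crit}(\beta;\varphi)=0$, hence $\beta=0$. Since $\{\varphi\le 0\}=Y$, this is exactly your statement that $\mathrm{im}(\iota_*)$ is a subgroup, so $b=\delta+a\in\mathrm{im}(\iota_*)\cap B=\{0\}$. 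Your version is more direct and avoids the (harmless but unnecessary) detour through $\varphi$; the paper's version has the virtue of tying the corollary visibly to the min-max framework being developed.
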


\begin{proof}
There is a smooth, nonnegative function $\varphi:X\rightarrow\mathbb{R} $
whose $0$-set is $Y$. Our hypotheses imply that $\mathrm{Crit} (\alpha
;\varphi)=0$ for all $\alpha\in A$, and $\mathrm{Crit}(\delta;\varphi)=0$. If
$\delta=\alpha+\beta$, then equation~\eqref{eq:critalphabeta} implies
$\mathrm{Crit}(\beta;\varphi)=0$, and thus $\beta=0$.
\end{proof}

Now let $f:X\rightarrow\mathbb{R}$ be a function as above, assume in addition
that it is Morse-Bott, and let $\varphi:\mathrm{Crit}(f)\rightarrow\mathbb{R}$
be a Morse-Bott function of class $C^{2}$ satisfying the Palais-Smale
condition~(C). Let $K$ be a connected component of the critical set of $f$ at
level $c$, with index $\iota$. We allow the critical set of $f$ at level $c$
to be disconnected and the index to vary from one component to the other. For
simplicity we use $\mathbb{Z} /2$-coefficients so that the local system of
orientations for the negative bundle on $K$ is trivial, but this discussion
generalizes in a straightforward way to arbitrary coefficients. Let
$\tau:H_{\cdot-\iota}(K)\rightarrow H_{\cdot}(X^{\leq c},X^{<c})$ be induced
from the Thom isomorphism. As an immediate consequence of
Corollary~\ref{cor:crit1} we have:

\begin{corollary}
\label{cor:crit2} Assume that $H_{\cdot}(K)$ is spanned over $\mathbb{Z}/2$ by
subspaces $A$ and $B$, and that $\delta\in H_{\cdot}(K)$. If there is a
compact set $Y\subset K$ supporting $A$ and $\delta$, but no nonzero element
of $B$, then $\tau(\delta)\in\tau(A)$.

In particular, if every nonzero element of $B$ has nonzero intersection
product with an element of $H_{\cdot}(K)$ that can be supported in $K-Y$, then
$\tau(\delta)\in\tau(A).$
\end{corollary}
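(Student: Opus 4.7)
The plan is to deduce the first assertion directly from Corollary~\ref{cor:crit1} applied to $K$ itself, and then to deduce the second ``in particular'' assertion from the first by a standard support argument for intersection products.

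\textbf{First assertion.} The critical manifold $K$ is itself a finite dimensional manifold, hence a Hilbert manifold, and $A,B,\delta$ all live in $H_\cdot(K)$. By hypothesis $Y\subset K$ is a compact set supporting $A$ and $\delta$ but no nonzero element of $B$. I would simply apply Corollary~\ref{cor:crit1} to the Hilbert manifold $K$ in place of $X$, with the same subspaces $A,B$ and the same class $\delta$: the proof of Corollary~\ref{cor:crit1} constructs an auxiliary smooth nonnegative function vanishing exactly on $Y$ and runs the minimax inequality~\eqref{eq:critalphabeta}, which is valid on any Hilbert manifold satisfying condition~(C) and in particular on $K$. The conclusion is that $\delta\in A$. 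Applying the linear map $\tau:H_{\cdot-\iota}(K)\to H_\cdot(X^{\le c},X^{<c})$ then yields $\tau(\delta)\in\tau(A)$.

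\textbf{Second assertion.} It suffices to verify that the intersection hypothesis forces $Y$ to support no nonzero element of $B$, whence the first assertion applies. Suppose for contradiction that some nonzero $\beta\in B$ admits a cycle representative with support in $Y$. By hypothesis there is a class $\gamma\in H_\cdot(K)$ supportable in $K\setminus Y$ with intersection product $\beta\cdot\gamma\neq 0$. However, intersection products can be computed by choosing transverse cycle representatives, and the resulting representative has support contained in the intersection of the two supports; since we may arrange these supports to lie in $Y$ and $K\setminus Y$ respectively, the intersection is empty, so $\beta\cdot\gamma=0$, a contradiction.

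\textbf{Anticipated obstacle.} The only delicate point is the support statement for intersection products in the second assertion: one needs $K$ to be a manifold on which intersection product makes sense (which is the case here, since each $K_k$ is a closed manifold) and one needs to know that classes supportable in two disjoint subsets have vanishing intersection. Over $\mathbb Z/2$ this is the standard Poincar\'e-dual statement that $\mathrm{PD}(\beta)\smile\mathrm{PD}(\gamma)$ can be represented by a cochain supported in any neighborhood of the intersection of the supports of $\beta$ and $\gamma$; taking the supports disjoint makes this cochain zero. Everything else is formal.
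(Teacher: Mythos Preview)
Your argument is correct and matches the paper's approach: the paper simply declares Corollary~\ref{cor:crit2} an ``immediate consequence of Corollary~\ref{cor:crit1}'' and gives no further details, so your expansion---applying Corollary~\ref{cor:crit1} on $K$ itself to get $\delta\in A$ and then pushing forward by the linear map $\tau$---is exactly the intended one-line deduction. Your treatment of the ``in particular'' clause via disjoint-support vanishing of intersection products is the natural reading (the paper does not spell it out, noting only that this part is not used in the sequel).
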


We shall not use in the sequel the second part of Corollary~\ref{cor:crit2}.

Critical levels are well-behaved with respect to the Pontryagin-Chas-Sullivan
product provided one uses the \emph{norm}
\[
F:=\sqrt{E}.
\]

More precisely, we have the following

\begin{lemma}
\label{lem:Crit} Let $\alpha,\beta\in H_{\cdot}(\mathcal{P}_{n})$. Min-max
critical levels of the norm functional $F$ satisfy the inequality
\begin{equation}
\mathrm{Crit}(\alpha\ast\beta;F)\leq\mathrm{Crit}(\alpha;F)+\mathrm{Crit}
(\beta;F). \label{eq:Crit}
\end{equation}

\end{lemma}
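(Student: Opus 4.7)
The plan is to show that if $\alpha$ is supported in the sublevel set $\mathcal P_n^{\le a}$ and $\beta$ is supported in $\mathcal P_n^{\le b}$, then $\alpha\ast\beta$ is supported in $\mathcal P_n^{\le a+b}$. The entire argument hinges on the identity
\[
F(c_{\min}(\gamma,\delta)) = F(\gamma)+F(\delta)
\]
established in~\S\ref{sec:concatenation}, together with the fact (also recorded there) that $c_{\min}$ is homotopic to $c$, so the Pontryagin-Chas-Sullivan product may be computed using $c_{\min}$ in place of $c$.

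I would first introduce the filtered version of the construction. Set
\[
\mathcal C_n^{\le a,\le b}:=\mathcal C_n\cap\bigl(\mathcal P_n^{\le a}\times\mathcal P_n^{\le b}\bigr),
\]
which is a codimension-$n$ Hilbert submanifold of $\mathcal P_n^{\le a}\times\mathcal P_n^{\le b}$ whose normal bundle is the restriction of $\nu(\mathcal C_n)$. The Thom isomorphism is local, so the shriek map is defined at the filtered level
\[
s_!^{a,b}: H_\cdot(\mathcal P_n^{\le a}\times \mathcal P_n^{\le b})\longrightarrow H_{\cdot-n}(\mathcal C_n^{\le a,\le b}),
\]
and the key identity guarantees that the energy-minimizing concatenation restricts to a map
\[
c_{\min}:\mathcal C_n^{\le a,\le b}\longrightarrow \mathcal P_n^{\le a+b}.
\]
The Alexander-Whitney map is natural with respect to inclusions, so it lifts to
\[
AW: H_i(\mathcal P_n^{\le a})\otimes H_j(\mathcal P_n^{\le b})\longrightarrow H_{i+j+2n}(\mathcal P_n^{\le a}\times\mathcal P_n^{\le b}).
\]
Composing the three maps yields a filtered product
\[
\ast^{a,b}:H_\cdot(\mathcal P_n^{\le a})\otimes H_\cdot(\mathcal P_n^{\le b})\longrightarrow H_\cdot(\mathcal P_n^{\le a+b})
\]
which, by naturality of each ingredient under the inclusions $\mathcal P_n^{\le a}\hookrightarrow \mathcal P_n$, fits into a commutative square with the unfiltered product $\ast$ on $H_\cdot(\mathcal P_n)$.

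Now let $a_0:=\mathrm{Crit}(\alpha;F)$ and $b_0:=\mathrm{Crit}(\beta;F)$. Since $F$ is a Morse-Bott function satisfying condition~(C) and critical levels are discrete on bounded intervals, these minima are attained: there exist classes $\widetilde\alpha\in H_\cdot(\mathcal P_n^{\le a_0})$, $\widetilde\beta\in H_\cdot(\mathcal P_n^{\le b_0})$ mapping to $\alpha$, $\beta$. The commutative square above then shows that $\widetilde\alpha\ast^{a_0,b_0}\widetilde\beta\in H_\cdot(\mathcal P_n^{\le a_0+b_0})$ maps to $\alpha\ast\beta$ in $H_\cdot(\mathcal P_n)$, whence $\mathrm{Crit}(\alpha\ast\beta;F)\le a_0+b_0$, which is the desired inequality.

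The only real point of care is the construction of the filtered shriek map and the verification that it is compatible with the unfiltered one — both are routine once one observes that the tubular neighborhood of $\mathcal C_n$ inside $\mathcal P_n\times\mathcal P_n$ can be chosen to restrict to a tubular neighborhood of $\mathcal C_n^{\le a,\le b}$ inside $\mathcal P_n^{\le a}\times\mathcal P_n^{\le b}$ (for instance by using the $F$-preserving reparametrization freedom), so that excision and the Thom isomorphism behave functorially. With these compatibilities in hand, everything else is formal, and the substantive input is the additivity identity for $F$ under $c_{\min}$.
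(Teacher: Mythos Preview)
Your argument is correct and is precisely the one the paper has in mind: the paper does not give its own proof but simply states that the proof of~\cite[Proposition~5.3]{Goresky-Hingston} for the free loop space ``holds verbatim in our situation,'' and what you have written is exactly that verbatim adaptation, with the additivity identity $F(c_{\min}(\gamma,\delta))=F(\gamma)+F(\delta)$ as the substantive input.
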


\hfill{$\square$}

The proof of~\eqref{eq:Crit} given in~\cite[Proposition~5.3]{Goresky-Hingston}
for the case of the free loop space holds verbatim in our situation. Note that
the critical value $\mathrm{Crit}(\alpha;F)$ is equal to $0$ for all classes
$\alpha$ that are represented by constant paths. Note also that
inequality~\eqref{eq:Crit} does \emph{not} hold for critical levels of the
energy functional $E$ since the latter is not additive with respect to
simultaneous concatenation and reparametrization of the paths (see the
discussion in~\cite[\S 10.6]{Goresky-Hingston}). This is main advantage of the
norm functional $F$ over the more popular energy functional $E$.

\smallskip\emph{(3) Symmetries.}

Let
\[
A:\mathcal{P}_{n}\rightarrow\mathcal{P}_{n}, \qquad(A\gamma)(t):=\gamma(1-t)
\]
be the involution given by reversing the direction of paths. This involution
fixes the space of constant paths pointwise. The critical set of $E$ is stable
under $A$ since $E$ is $A$-invariant.

\begin{lemma}
\label{lem:A} For all $\alpha,\beta\in H_{\cdot}(\mathcal{P}_{n})$ we have
\[
A(\alpha\ast\beta)=A(\beta)\ast A(\alpha).
\]

\end{lemma}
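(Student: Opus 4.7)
The plan is to reduce the identity to a geometric identity on the concatenation space $\mathcal{C}_n$. Consider the involution
\[
T:\mathcal{C}_n\to\mathcal{C}_n,\qquad T(\gamma,\delta):=(A\delta,A\gamma),
\]
which is well-defined since $\mathrm{ev}_1(A\delta)=\delta(0)=\gamma(1)=\mathrm{ev}_0(A\gamma)$. Let $\tau:\mathcal{P}_n\times\mathcal{P}_n\to\mathcal{P}_n\times\mathcal{P}_n$ denote the swap map. I would first check the two key geometric identities
\[
A\circ c=c\circ T,\qquad s\circ T=\tau\circ(A\times A)\circ s,
\]
both of which follow from unwinding the definition of $c=c_{\min}$; for the first identity, reversing a concatenation $c_{\min}(\gamma,\delta)$ exchanges the two halves and reverses each one, which is exactly $c_{\min}(A\delta,A\gamma)$ after replacing the splitting parameter $s=F(\gamma)/(F(\gamma)+F(\delta))$ by $1-s=F(A\delta)/(F(A\delta)+F(A\gamma))$. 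The second identity is a direct verification.

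Next I would translate these geometric identities into homological ones. Pushing forward by $A\times A$ commutes with the Alexander--Whitney map by naturality,
\[
(A\times A)_{\ast}AW(\alpha\otimes\beta)=AW(A(\alpha)\otimes A(\beta)),
\]
and pushing forward by $\tau$ exchanges the two factors of the AW product, which over $\mathbb{Z}/2$ gives
\[
\tau_{\ast}AW(A(\alpha)\otimes A(\beta))=AW(A(\beta)\otimes A(\alpha)).
\]
The nontrivial step is showing the base-change identity
\[
T_{\ast}\circ s_!=s_!\circ\bigl(\tau\circ(A\times A)\bigr)_{\ast}
\]
for the shriek map. This follows from the commutative square formed by $s$, $T$ and $\tau\circ(A\times A)$: since the horizontal arrows are diffeomorphisms and the vertical arrows $s$ are the same codimension $n$ embedding, the induced map on normal bundles is an isomorphism covering $T$, so the Thom isomorphism and excision are compatible with the involutions.

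Combining everything, starting from $A_{\ast}(\alpha\ast\beta)=A_{\ast}c_{\ast}s_!AW(\alpha\otimes\beta)$, the identity $A\circ c=c\circ T$ replaces $A_{\ast}c_{\ast}$ by $c_{\ast}T_{\ast}$, the shriek base-change pushes $T_{\ast}$ through $s_!$ at the cost of $\bigl(\tau\circ(A\times A)\bigr)_{\ast}$, and the two AW identities produce $AW(A(\beta)\otimes A(\alpha))$. This yields $c_{\ast}s_!AW(A(\beta)\otimes A(\alpha))=A(\beta)\ast A(\alpha)$, as required. The main point requiring care is the base-change identity for $s_!$; the rest is formal naturality plus a short geometric computation for the concatenation map.
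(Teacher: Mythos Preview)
Your proof is correct and follows essentially the same approach as the paper: the paper introduces the swap $T$ on $\mathcal{P}_n\times\mathcal{P}_n$ (your $\tau$) and works with the composite $(A\times A)\circ T$, which restricts to $\mathcal{C}_n$ as exactly your involution $T$, then records the same two identities $c\circ((A\times A)\circ T)=A\circ c$ and $((A\times A)\circ T)\circ s=s\circ((A\times A)\circ T)$ and appeals to naturality of the Alexander--Whitney map and its compatibility with the swap. You are somewhat more explicit about the base-change identity for $s_!$, which the paper leaves implicit; note also that the paper's argument uses the time-$\tfrac12$ concatenation $c$ rather than $c_{\min}$, so the identity $A\circ c=c\circ T$ holds on the nose without tracking the splitting parameter.
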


\begin{proof}
Let $T$ be the self-diffeomorphism of $\mathcal{P}_{n}\times\mathcal{P}_{n}$
given by $(\gamma,\delta)\mapsto(\delta,\gamma)$. Then $(A\times A)\circ
T=T\circ(A\times A)$ is a self-diffeomorphism of $\mathcal{P}_{n}
\times\mathcal{P}_{n}$ which preserves $\mathcal{C}_{n}$ and satisfies
\[
c\circ\big((A\times A)\circ T\big)=A\circ c,
\]
as well as $\big((A\times A)\circ T\big)\circ s=s\circ\big((A\times A)\circ
T\big)$. The result then follows directly from the definition of the $\ast
$-product, using the naturality of the Alexander-Whitney shuffle product and
the fact that it commutes with $T$ (in the obvious sense).
\end{proof}

\begin{remark}
The space $\mathcal{P}_{n}$ carries yet another involution which we denote
$\gamma\mapsto\bar\gamma$ and call \emph{complex conjugation}. This is induced
by complex conjugation on complex projective space
\[
z=[z_{0}:z_{1}:\dots:z_{n}]\mapsto\bar{z}=[\bar z_{0}:\bar z_{1}:\dots:\bar
z_{n}],
\]
which is an involution that fixes $\mathbb{R}P^{n}$ pointwise.

The involutions $\bar{\quad}$ and $A$ commute with each other. They also
commute with the map
\[
\mathcal{P} _{n}\hookrightarrow\mathcal{P}_{n+1}
\]
induced by any real linear embedding $(\mathbb{C}P^{n},\mathbb{R}
P^{n})\hookrightarrow(\mathbb{C}P^{n+1},\mathbb{R}P^{n+1})$. If $\gamma$ is a
vertical half-circle, then so are $\bar{\gamma}$ and $A\gamma$.
\end{remark}

\smallskip\emph{(4) Heredity.}

As we have already alluded to in the previous paragraph, the embedding
\[
(\mathbb{C}P^{n},\mathbb{R}P^{n})
\hookrightarrow(\mathbb{C}P^{n+1},\mathbb{R}P^{n+1}),\qquad[z_{0}:\dots:z_{n}]\mapsto[z_{0}:\dots:z_{n}:0]
\]
induces an embedding
\[
\mathcal{P}_{n}\hookrightarrow\mathcal{P}_{n+1}.
\]
We denote the induced map in homology
\[
f_{n}:H_{\cdot}(\mathcal{P}_{n})\to H_{\cdot}(\mathcal{P}_{n+1}).
\]
We emphasize that $f_{n}$ is \emph{not} a morphism of rings (not only does it
fail to send the unit into the unit, but it also does not respect the product structure).

However, the map $f_{n}$ is \emph{linear}. The following \emph{Heredity
Principle} is a simple but useful rephrasing of this fact: given a linear
relation between homology classes in $H_{\cdot}(\mathcal{P}_{n})$, their
images under $f_{n}$ must satisfy the \emph{same} linear relation. This
principle is effective since our generators of $H_{\cdot}(\mathcal{P}_{n})$
are geometric, so that their images under $f_{n}$ are easy to identify.

\subsection{Hopf symmetries and section ($n$ odd)} \label{sec:Hopf}

Let $n=2m+1$. The fibers of the Hopf principal bundle
\[
S^{1}\rightarrow S^{n}\rightarrow\mathbb{C}P^{m}
\]
are (oriented) great circles on $S^{n}$. The quotient of the total space by
the subgroup $\mathbb{Z}/2 \subset S^{1}$ is the total space of a principal
$S^{1}/(\mathbb{Z}/2) = \mathbb{R} P^{1}$-bundle
\[
\mathbb{R} P^{1}\rightarrow\mathbb{R} P^{n}\rightarrow\mathbb{C} P^{m}\text{.
}
\]
Because the Hopf fibers are geodesics, and $S^{1}$ acts by isometries, the
fibers of the map $\mathbb{R} P^{n}\rightarrow\mathbb{C} P^{m}$ are (oriented)
geodesics on $\mathbb{R}P^{n}$.

The map that assigns to $x\in S^{n}$ the unit tangent vector $u$ to the Hopf
fiber at $x$ is a global section of the unit tangent bundle $STS^{n}$. If we
view $STS^{n}$ as $\{(x,u)\in\mathbb{R}^{n+1}\times\mathbb{R}^{n+1}\,:\,\Vert
x\Vert=1,\ \Vert u\Vert=1,\,\langle x,u\rangle=0\}$, then this section can be
viewed as the map
\[
x=(x_{0},x_{1},\dots,x_{n-1},x_{n})\mapsto u=(-x_{1},x_{0},\dots
,-x_{n},x_{n-1}).
\]
In complex coordinates this is $u=Jx$, where $J\in S^{1}$ is a fourth root of unity.

If $n\equiv3\ (\operatorname{mod}4)$, then $S^{n}$ can be identified with the
unit sphere in the quaternionic vector space $\mathbb{H}^{(n+1)/4\text{ }}$.
In this case there are three such sections
\[
u=J_{i}x,\qquad i\in\{1,2,3\}
\]
so that for each $x\in$ $S^{n}$ the vectors $Jx=J_{1}x,$ $J_{2}x,$ $J_{3}x\in
T_{x}S^{n}$ form an orthonormal triple.

The map that assigns to $x\in\mathbb{R}P^{n}$ the tangent vector $u$ to the
Hopf fiber at $x$ is a global section of the unit tangent bundle
$ST\mathbb{R}P^{n}$ that descends from the above section of $STS^{n}$ by
viewing $ST\mathbb{R}P^{n}$ as the quotient of $STS^{n}$ by the action of the
derivative of the antipodal map $S^{n}\rightarrow S^{n}$. The expression
$u=Jx$ also makes sense for $x\in\mathbb{R}P^{n}$ and $u\in ST_{x}
\mathbb{R}P^{n}$, and describes the section
\begin{align}
\label{eq:sigma}\sigma:\mathbb{R}P^{n}  &  \rightarrow ST\mathbb{R}P^{n},\\
x  &  \mapsto u=Jx.\nonumber
\end{align}
It follows that the Hopf fiber through $x$ in $\mathbb{R}P^{n}$ is the unique
geodesic with initial vector $u=Jx$. Now let $I$ be the complex structure on
$T\mathbb{C}P^{n}.$ Then
\[
x\mapsto v=IJx
\]
describes a section of the unit \textit{normal} bundle $SN\mathbb{R}P^{n}$ to
$\mathbb{R}P^{n}$ in $\mathbb{C}P^{n}$.

\begin{figure}[ptb]
\begin{center}
\input{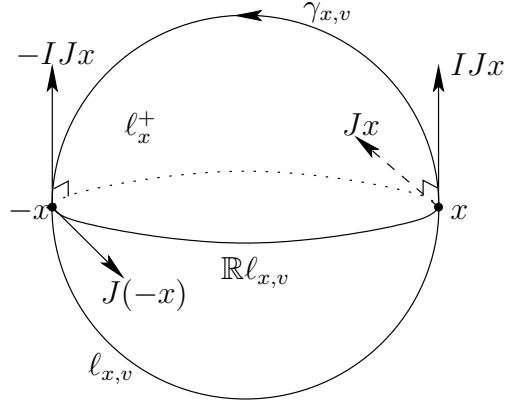}
\end{center}
\caption{Hopf fibers in $S^{2m+1}$.}
\label{fig:IJ}
\end{figure}

The direction $v=IJx$ at $x\in\mathbb{R}P^{n}\subset\mathbb{C}P^{n}$
determines the complex line $\ell_{x,v}$, whose intersection with
$\mathbb{R}P^{n}$ is thus precisely the Hopf fiber through $x$. Note also that
the equator of $\ell_{x,v}$ inherits an orientation from the Hopf fiber, and
that $\ell_{x,v}$ has a globally defined ``upper hemisphere".

Define
\[
S_{1} :=\{(x,v,\theta)\in Y_{1}:v=IJx\} \ \subseteq\ Y_{1}=SN\mathbb{R}
P^{n}\times S^{1}.
\]
Note that
\[
S_{1}=\{\text{vertical half circles }\gamma:\gamma\subset
\ell_{\gamma(0)}^{+}\}
\]
where $\ell_{\gamma(0)}^{+}$ is the upper half of the complex line containing
the Hopf fiber of $\gamma(0)$. Note also that for each vertical half circle
$\gamma$ we have $\gamma\subset\ell_{\gamma(0)}^{+}$ if and only if
$A\gamma\subset\ell_{\gamma(1)}^{+}$. This gives us the first statement in the
next lemma.

\begin{lemma}
\label{lem:AS1} We have
\begin{align*}
AC(S_{1})  &  =C(S_{1}),\\
AC(Y_{1})  &  =C(Y_{1}),
\end{align*}
where $C:Y_{1}\rightarrow\mathcal{P}_{n}$ is the evaluation map (formerly
denoted $\varphi_{1}$)
\[
(x,v,\theta) \mapsto C_{x,v,\theta}.
\]

\end{lemma}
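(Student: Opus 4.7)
The plan is to realize $AC_{x,v,\theta}$, the reverse of any vertical half-circle, as another vertical half-circle $C_{x',v',\theta'}$ by explicitly identifying the parameters $(x',v',\theta')$. Both equalities will then fall out: the second directly from the construction, and the first by checking that the Hopf condition $v=IJx$ is preserved under $(x,v,\theta)\mapsto(x',v',\theta')$.

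For the equality $AC(Y_1)=C(Y_1)$, given $(x,v,\theta)\in Y_1$ I would set
\[
x' := \mathrm{ev}_1(x,v,\theta) = C_{x,v,\theta}(1), \qquad v' := -\dot{C}_{x,v,\theta}(1).
\]
Since $C_{x,v,\theta}$ meets $\mathbb{R}P^n$ orthogonally at both endpoints, $v'$ is a unit vector at $x'$ normal to $\mathbb{R}P^n$. Next, in the embedding of $\ell_{x,v}$ as the sphere of radius $1/2$ in $\mathbb{R}^3$ with $\mathbb{R}\ell_{x,v}$ as equator and $v\in\{0\}\times\mathbb{R}^+$, the half-circle $C_{x,v,\theta}$ traces the upper hemisphere from $x$ to $x'$, so its terminal velocity $\dot{C}_{x,v,\theta}(1)$ points into the lower hemisphere and $v'$ points into the upper one. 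Consequently $\ell_{x',v'}=\ell_{x,v}$ and the two embeddings into $\mathbb{R}^3$ induced by $v$ and by $v'$ single out the same upper hemisphere. Choosing $\theta'\in S^1_\pi$ so that $\exp_{x'}(-\theta' I v')=x$, the half-circle $C_{x',v',\theta'}$ will be the intersection of this same upper hemisphere with the same vertical plane through $x$ and $x'$, hence will coincide with $C_{x,v,\theta}$ as a subset; since both $C_{x',v',\theta'}$ and $AC_{x,v,\theta}$ are constant-speed parametrizations of that subset running from $x'$ to $x$, they agree as maps. The case of a constant circle ($x=x'$) is trivial, with $v':=v$ and $\theta':=0$, since $A$ fixes constant paths. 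This gives $AC(Y_1)\subset C(Y_1)$, and the reverse inclusion follows from $A^2=\mathrm{Id}$.

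For the equality $AC(S_1)=C(S_1)$, I would invoke the observation made just before the lemma statement: a vertical half-circle $\gamma$ satisfies $\gamma\subset\ell^+_{\gamma(0)}$ if and only if $A\gamma\subset\ell^+_{\gamma(1)}$. Under the identification $S_1=\{\gamma:\gamma\subset\ell^+_{\gamma(0)}\}$ (equivalent to $v=IJx$), this immediately gives $AC(S_1)\subseteq C(S_1)$, and $A^2=\mathrm{Id}$ yields equality. Equivalently, with the parameters produced above, if $v=IJx$ then $x'$ lies on the Hopf fiber through $x$ and $v'=IJx'$, so $(x',v',\theta')\in S_1$.

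The main delicate point in the whole argument is the identification of the upper hemispheres for the two descriptions of the common complex line $\ell_{x,v}=\ell_{x',v'}$. This reduces to the geometric fact that $v$ at $x$ and $v'$ at $x'$ lie on the same side of the common equator $\mathbb{R}\ell_{x,v}$, which is forced by the shape of a vertical half-circle in the ambient $\mathbb{R}^3$. Everything else is a matter of unwinding definitions.
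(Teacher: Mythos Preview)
Your proof is correct and follows the same approach as the paper's: both reduce the $Y_1$ statement to the fact that the time-reverse of a vertical half-circle is again a vertical half-circle, and both invoke the observation preceding the lemma (that $\gamma\subset\ell_{\gamma(0)}^+$ iff $A\gamma\subset\ell_{\gamma(1)}^+$) for the $S_1$ statement. Your version is simply more explicit in unpacking the $Y_1$ case by exhibiting the parameters $(x',v',\theta')$, whereas the paper merely asserts the fact; one small slip is that $-\dot C_{x,v,\theta}(1)$ is not unit in general and should be normalized before being called $v'$.
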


\begin{proof}
The second statement is a consequence of the fact that if $\gamma$ is a
vertical half circle, so is $A\gamma$. See the example $n=1$ worked out below.
\end{proof}

\subsection{Generators}

Let us first describe the generators of $H_{\cdot}(\mathcal{P}_{n})$ with
respect to the $\ast$-product for $n\geq1$.

\begin{itemize}
\item the ``constant loops class" $U\in H_{n}(\mathcal{P}_{n})$. This is the
image of the fundamental class $[\mathbb{R}P^{n}]$ under the injection
$H_{\cdot}(\mathbb{R}P^{n})\hookrightarrow H_{\cdot}(\mathcal{P}_{n})$. It is
represented by the submanifold $\mathbb{R}P^{n}\subset\mathcal{P}_{n}$
consisting of constant loops.

\item the \textquotedblleft hyperplane class" $H\in H_{n-1}(\mathcal{P}_{n})$.
It is represented by the class of a hyperplane $\mathbb{R}P^{n-1}$ in the
constant loops $\mathbb{R}P^{n}\subset\mathcal{P}_{n}$.

\item the \textquotedblleft completing manifold class" $Y\in H_{2n}
(\mathcal{P}_{n})$. This is represented by the map
\begin{align*}
C  &  :Y_{1}\rightarrow\mathcal{P}_{n},\text{ \ }\\
(x,v,\theta)  &  \mapsto C_{x,v,\theta}.
\end{align*}

\item the \textquotedblleft class of a completed section", $S\in
H_{n+1}(\mathcal{P}_{n})$ for $n=2m+1$ odd. This is represented by the map
\begin{align*}
C  &  :S_{1}\rightarrow\mathcal{P}_{n},\text{ \ }\\
(x,v,\theta)  &  \mapsto C_{x,v,\theta}.
\end{align*}

\item the \textquotedblleft class of a completed section defined along a
hyperplane", $T\in H_{n}(\mathcal{P}_{n})$ for $n=2m$ even. We choose a
hyperplane $\mathbb{R}P^{2m-1}\subset\mathbb{R}P^{2m}$ and consider again the
section $\mathbb{R}P^{2m-1}\rightarrow ST\mathbb{R}P^{2m-1}$ defined above,
viewed as taking values in $ST\mathbb{R}P^{2m}$. The class $T$ is represented
by the restriction of $C$ to
\[
\{(x,v,\theta)\in Y_{1}:x\in\mathbb{R}P^{2m-1}\text{ and }v=IJx\}
\]
(For $n$ even, $J$ is only defined along the hyperplane $\mathbb{R}P^{2m-1}$.)

Alternatively, the class $T\in H_{n}(\mathcal{P}_{n})$, $n=2m$ even is the
image of $S\in H_{n}(\mathcal{P}_{n-1})$ under the map 
$H_{n} (\mathcal{P}_{n-1})\rightarrow H_{n}(\mathcal{P}_{n})$.
\end{itemize}

It follows from Lemma~\ref{lem:AS1} and the Heredity Principle
in~\S \ref{sec:gen}(4) that
\begin{equation}
\label{eq:Aeverything}AS =S,\qquad AY=Y, \qquad AH=H, \qquad AT=T
\end{equation}

We denote $P\in H_{0}(\mathcal{P}_{n})$ the class of a point (taken to be a
constant loop, for example).

We shall sometimes denote the above classes by $U_{n}$, $H_{n}$, $Y_{n}$,
$S_{n}$, resp. $T_{n}$, to indicate that they pertain to the homology of
$\mathcal{P}_{n}$.

\begin{remark}
\label{rmk:n=1} When $n=1$ we have that $H=P$. In this case $ST\mathbb{R}
P^{1}$ is a disjoint union of two circles and the completing manifold is
$Y_{1} =ST\mathbb{R}P^{1}\times S_{\pi}^{1}$, which is a disjoint union of two
tori. For every $x=[x_{0} :x_{1}]\in\mathbb{R}P^{1}$, the vector
$Jx=[-x_{1}:x_{0}]$ lies in $T_{x}\mathbb{R}P^{1}$, and $IJx$ (defined using
the vector $Jx$ and the complex structure $I$ on $\mathbb{C} P^{1}$) is
tangent to $\mathbb{C} P^{1}$ but normal to $\mathbb{R}P^{1}$. The vectors
$IJx$ with $x\in\mathbb{R}P^{1}$ point into the ``upper" hemisphere of
$\mathbb{C} P^{1}$ (the hemisphere about which $\mathbb{R} P^{1}$ has positive
winding number) and the vectors $-IJx$ with $x\in\mathbb{R}P^{1}$ point into
the ``lower" hemisphere. (Complex conjugation reverses the two hemispheres and
fixes the equator $\mathbb{R}P^{1}$.) Thus $S$ \textit{is represented by the
space of vertical half circles in the upper hemisphere with endpoints on
}$\mathbb{R}P^{1}$\textit{, parametrized by the endpoints }$\{(x,x^{\prime
})\in\mathbb{R}P^{1}\times\mathbb{R}P^{1}\}$. The class $Y\in H_{2}
(\mathcal{P}_{1})$ is represented by the sum of $S$ and its complex conjugate
$\overline{S}$ in the lower hemisphere:
\[
Y=S+\overline{S}.
\]

\end{remark}

Using our previous computation of $H_{\cdot}(\mathcal{P}_{n})$, we are now in
a position to write down explicitly generators of all the homology groups. For
simplicity we shall suppress the symbol $\ast$ from the multiplicative
notation, and write for example $HY$ instead of $H\ast Y$. For readability we
refer to a critical level $k\pi/2$ as being \textquotedblleft level
$k$\textquotedblright. We refer to the block $H_{\cdot}(ST\mathbb{R}
P^{n})[-1-(k-1)n]$ in $H_{\cdot}(\mathcal{P}_{n})$ on level $k$ as
\textquotedblleft the block on level $k$". Recall that $H_{\cdot}
(\mathbb{R}P^{n})=\langle U,H,H^{2},\dots,H^{n}\rangle$.

\begin{lemma}
Multiplication on the right by $Y^{s-\ell}$ determines an isomorphism between
the blocks on levels $k=\ell$ and $k=s$ for $s\geq\ell$ and $n\geq2$.

(i) Let $n$ odd $\ge3$. The block on level $1$ is
\[
H_{\cdot}(ST\mathbb{R}P^{n})[-1]\simeq H_{\cdot}(\mathbb{R}P^{n})\ast\langle
S,Y\rangle.
\]

(ii) Let $n$ even $\ge2$. The block on level $1$ is
\[
H_{\cdot}(ST\mathbb{R}P^{n})[-1]\simeq\langle U,H,H^{2},\dots,H^{n-1}
\rangle\ast\langle T,Y\rangle.
\]

(iii) Let $n=1$. The block $H_{\cdot}(ST\mathbb{R}P^{1})[-k]$ on level $k$ is
isomorphic to $H_{\cdot}(\mathbb{R}P^{1})\ast\langle S\bar S S\dots,\bar S S
\bar S\dots\rangle$, where each of the products $\dots S\bar S S\dots$ has $k$ factors.
\end{lemma}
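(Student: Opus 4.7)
The plan has two parts: first establish the isomorphism via right multiplication by $Y^{s-\ell}$, then identify the generators of the level-$k$ blocks in each case. For the isomorphism, by induction on $s-\ell$ it suffices to show that $(-)\ast Y$ maps the level-$\ell$ block isomorphically onto the level-$(\ell+1)$ block for every $\ell\geq 1$. The fibre-product identity $Y_{\ell+1} = Y_\ell\, {}_{\mathrm{ev}_1}\!\!\times_{\mathrm{ev}_0} Y_1$ together with~\eqref{eq:YkY1k} is the geometric backbone: if $\alpha$ in the level-$\ell$ block is represented, via Lemma~\ref{lem:completing-local}(b), by a cycle $Z\subset Y_\ell$ transverse to $L_\ell$, then Lemma~\ref{lem:geom*} expresses $\alpha\ast Y$ as the image of $Z\times_{\mathbb{R}P^n} Y_1 \subset Y_{\ell+1}$, whose critical level is $\leq (\ell+1)\pi/2$ by Lemma~\ref{lem:Crit}. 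The last entry of a point of $L_{\ell+1}$ is forced from the previous $\ell$ entries by the antipodal-continuation rule together with $\theta_\ell=\pi/2$, pinning all $n$ free coordinates of the $Y_1$-factor in the fibre product; a codimension count then yields transversality of $Z\times_{\mathbb{R}P^n} Y_1$ to $L_{\ell+1}$, and the intersection is canonically isomorphic to $Z\cap L_\ell$. Since both $L_\ell\cong ST\mathbb{R}P^n$ and $L_{\ell+1}\cong ST\mathbb{R}P^n$ are parametrised by the initial data $(x_0,v_0)$, this isomorphism is the identity under the natural identifications, so $(-)\ast Y$ intertwines the Thom isomorphisms with the identity of $H_\cdot(ST\mathbb{R}P^n)$ and is therefore an isomorphism.

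For case (i), $n$ odd $\geq 3$: since the mod-$2$ Euler class of $T\mathbb{R}P^n$ vanishes, $H_\cdot(ST\mathbb{R}P^n) = H_\cdot(\mathbb{R}P^n)\otimes H_\cdot(S^{n-1})$. Applying the dictionary from Part~1 to $Y = [\varphi_1:Y_1\to\mathcal{P}_n]$ (with $Z=Y_1$, so $Z\cap L_1 = L_1$) gives $Y \leftrightarrow 1\otimes[S^{n-1}]$, and to $S = [\varphi_1|_{S_1}]$ gives $S\leftrightarrow [\sigma(\mathbb{R}P^n)] = [\mathbb{R}P^n]\otimes 1$ where $\sigma$ is the Hopf section~\eqref{eq:sigma}. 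Combining with Lemma~\ref{lem:geom*} applied to $H^i$ realised as constant loops over a linear $\mathbb{R}P^{n-i}\subset \mathbb{R}P^n$, the products $H^i\ast S$ and $H^i\ast Y$ correspond to $[\mathbb{R}P^{n-i}]\otimes 1$ and $[\mathbb{R}P^{n-i}]\otimes [S^{n-1}]$ for $i=0,\dots,n$. These $2(n+1)$ classes form a $\mathbb{Z}/2$-basis of the level-$1$ block, and multiplication by $Y^{k-1}$ transports this to a basis of the level-$k$ block.

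For case (ii), $n$ even: the Hopf section $\sigma$ is only defined over a linear $\mathbb{R}P^{n-1}\subset\mathbb{R}P^n$, and the class $T$ is represented by $T_1\subset Y_1$ with $T_1\cap L_1 = \sigma(\mathbb{R}P^{n-1})$. The same reasoning produces $2n$ classes $H^i\ast T$, $H^i\ast Y$ for $i=0,\dots,n-1$, one in each $H$-degree between $1$ and $2n$, each of rank~$1$ by the mod-$2$ Euler-class Gysin computation of $H_\cdot(ST\mathbb{R}P^n)$; hence these form a basis. For case (iii), $n = 1$: $ST\mathbb{R}P^1 = S^1\sqcup S^1$ with the two components representing the upper/lower hemispheres of $\mathbb{C}P^1$, and $S$, $\bar S$ are the corresponding fundamental classes (Remark~\ref{rmk:n=1}). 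The length-$k$ alternating products $S\bar S S\cdots$ and $\bar S S\bar S\cdots$ have representing cycles in $Y_k$ transverse to the two respective components of $L_k$; multiplying by $U$ and $H=P$ yields the $4$ classes spanning the rank-$4$ level-$k$ block.

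The principal technical obstacle is the dimension/transversality argument at the end of Part~1: verifying that $Z\times_{\mathbb{R}P^n} Y_1$ meets $L_{\ell+1}$ exactly in $Z\cap L_\ell$ under the right identifications. Once that is in place, the generator identifications in cases (i)--(iii) amount to routine geometric bookkeeping.
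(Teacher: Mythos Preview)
Your approach is correct and close in spirit to the paper's, but you have introduced an avoidable complication. For the isomorphism given by right multiplication by $Y^{s-\ell}$, the paper does not work with an arbitrary transverse cycle $Z\subset Y_\ell$ and then argue transversality of $Z\times_{\mathbb{R}P^n}Y_1$ to $L_{\ell+1}$. Instead it fixes the specific cycles $A_k:=p_k^{-1}(A)\subset Y_k$, where $A\subset ST\mathbb{R}P^n$ runs over submanifolds representing a basis of $H_\cdot(ST\mathbb{R}P^n)$ and $p_k:Y_k\to SN\mathbb{R}P^n$ is the bundle map of Remark~\ref{rmk:pk}. By construction of the completing manifold (Remark~\ref{rmk:perfect} and the proof of Theorem~\ref{thm:HPn}), the classes $[A_k]$ already form a basis of the level-$k$ block. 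The fiber-product identity $A_s=A_\ell\,{}_{\mathrm{ev}_1}\!\!\times_{\mathrm{ev}_0}Y_{s-\ell}$ is then immediate from the definition of $Y_k$, and Lemma~\ref{lem:geom*} applies with no further transversality check because $\mathrm{ev}_0\circ\varphi_{s-\ell}:Y_{s-\ell}\to\mathbb{R}P^n$ is a submersion. Associativity of $c_{\min}$ gives $[A_s]=[A_\ell]\ast Y^{s-\ell}$ directly.

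So what you flagged as the ``principal technical obstacle'' simply evaporates if you pick the right representatives from the start. Your transversality argument can be made to work (the $n$ free coordinates $(v_\ell,\theta_\ell)$ of the appended $Y_1$-factor are indeed transverse to the constraint defining $L_{\ell+1}$ in the last slot), but it is extra work that the paper's choice of basis makes unnecessary. Your treatment of (i)--(iii) is essentially the same as the paper's.
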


\begin{proof}
To prove statement~(i), let us recall the computation of $H_{\cdot
}(ST\mathbb{R}P^{n};\mathbb{Z}/2)$ in~\S \ref{sec:homologyPn}. A basis of
$H_{\cdot}(ST\mathbb{R}P^{n})$ is obtained by restricting the section $\sigma$
in~\eqref{eq:sigma}, respectively the bundle $ST\mathbb{R}P^{n}$, to the
linear subspaces $\mathbb{R}P^{k}$, $0\le k\le n$. The resulting homology
classes can alternatively be described as the intersection products between
the classes $[\sigma]\in H_{n}(\mathbb{R}P^{n})$, resp. $[ST\mathbb{R}
P^{n}]\in H_{2n-1}(\mathbb{R}P^{n})$ with the classes $[\mathbb{R}P^{k}]$,
$0\le k\le n$. Note in particular that $H_{\cdot}(ST\mathbb{R}P^{n})$ has a
basis consisting of classes that are represented by submanifolds.

To understand the block $H_{\cdot}(ST\mathbb{R}P^{n})[-1]$ on level $1$,
recall that $Y_{1}$ is a (trivial) $S^{1}_{\pi}$-bundle over $K_{1}\equiv
ST\mathbb{R}P^{n}$ by $(x,v,\theta)\overset{\pi}{\mapsto}(x,v)$. Each cycle in
$ST\mathbb{R}P^{n}$ gives rise to a cycle in $\mathcal{P}_{n}$ by taking its
preimage under the bundle map $\pi$ and evaluating into $\mathcal{P}_{n}$. In
view of the above description of $H_{\cdot}(ST\mathbb{R}P^{n})$ and in view of
the definition of the classes $S$ and $Y$, we obtain $H_{\cdot}(ST\mathbb{R}
P^{n})[-1]\simeq H_{\cdot}(\mathbb{R}P^{n})\ast\langle S,Y\rangle$.

The proof of statement~(ii) is similar to the proof of statement~(i).

The statement concerning right multiplication by $Y^{s-\ell}$ follows from
Lemma~\ref{lem:geom*}. To see this, we recall the identification
$SN\mathbb{R}P^{n}\equiv ST\mathbb{R}P^{n}$, the bundle maps $p_{k}
:Y_{k}\rightarrow ST\mathbb{R} P^{n}$, $k\geq1$ described in
Remark~\ref{rmk:pk}, and the evaluation maps $\varphi_{k}:Y_{k}\rightarrow
\mathcal{P}_{n}$, $k\geq1$ in~\S \ref{sec:homologyPn}. Also, given a
submanifold $X\subset Y_{k}$ we denote $[X]\in H_{\cdot}(\mathcal{P}_{n})$ the
class represented by the map $\varphi_{k}|_{X}$.

Let now $A\subset ST\mathbb{R}P^{n}$ be a submanifold and denote
$A_{k}:=(p_{k})^{-1}(A)\subset Y_{k}$, $k\geq1$. Then
\[
A_{s}=A_{\ell}\ {_{{\mathrm{ev}_{1}}\circ f_{\ell}}}\!\times
{_{{\mathrm{ev}_{0}}\circ f_{s-\ell}}}\ Y_{s-\ell}
\]
by definition of the manifolds $Y_{k}$, $k\geq1$. By associativity of the map
$c_{\min}$ we have $c_{\min}\circ(\varphi_{\ell},\varphi_{s-\ell})=\varphi
_{s}$, and Lemma~\ref{lem:geom*} implies
\[
\lbrack A_{s}]=[A_{\ell}]\ast\lbrack Y_{s-\ell}].
\]
By~\eqref{eq:YkY1k} we obtain $[A_{s}]=[A_{\ell}]\ast Y^{s-\ell}$. Since for
each $k\geq1$ the classes $[A_{k}]$ form a basis of the block on level $k$ as
$A$ ranges through a collection of submanifolds of $ST\mathbb{R}P^{n}$ which
represent a basis of $H_{\cdot}(ST\mathbb{R}P^{n})$, the conclusion follows.

Statement~(iii) follows from the above and from the discussion in
Remark~\ref{rmk:n=1}.
\end{proof}

The resulting explicit description of a set of generators for the homology
groups $H_{\cdot}(\mathcal{P}_{n})$ is the following.

\begin{itemize}
\item \textbf{$\mathbf{n}$ even $\mathbf{\ge2}$}. The block $H_{\cdot
}(\mathbb{R}P^{n})$ on level $0$ is generated by $U,H,H^{2},\dots,\break
H^{n}=P$. The block $H_{\cdot}(ST\mathbb{R}P^{n})[-1-(k-1)n]$ on level $k$ is
generated by $TY^{k-1},HTY^{k-1},\dots,H^{n-1}TY^{k-1}$ in (descending)
degrees $kn,\dots,1+(k-1)n$, and $Y^{k},HY^{k},\dots, H^{n-1}Y^{k}$ in
(descending) degrees $(k+1)n,\dots, 1+kn$.

\item \textbf{$\mathbf{n}$ odd $\mathbf{\ge3}$}. The block $H_{\cdot
}(\mathbb{R}P^{n})$ on level $0$ is generated by $U,H,H^{2},\dots,\break
H^{n}=P$. The block $H_{\cdot}(ST\mathbb{R}P^{n})[-1-(k-1)n]$ on level $k$ has
generators $SY^{k-1},HSY^{k-1},H^{2}SY^{k-1},\dots,H^{n}SY^{k-1}$ in
(descending) degrees $1+kn,\dots, 1+(k-1)n$, and $Y^{k},HY^{k},H^{2}
Y^{k},\dots, H^{n}Y^{k}$ in descending degrees $(k+1)n, \dots, kn$.

\item $\mathbf{n=1}$. The block $H_{\cdot}(\mathbb{R}P^{1})$ on level $0$ is
generated by $U$ (in degree $1$) and $H=P$ (in degree $0$). The block
$H_{\cdot}(ST\mathbb{R}P^{1})[-k]=H_{\cdot}(S^{1}\,\sqcup\,S^{1})[-k]$ on
level $k$ is generated by $S\overline{S}S\dots$ and $\overline{S}S\overline
{S}...$($k$ factors each) in degree $k+1$, and by $HS\overline{S}S\dots$and
$H\overline{S}S\overline{S}...$in degree $k$. (Note that according to our
conventions, $Y=S+\overline{S}$.)
\end{itemize}

It is useful to depict the first levels for the cases $n=1,2,3,4$ in a common table.

\begin{center}
{\scriptsize
\begin{tabular}
[c]{c|||c|c|c|c|||l|l|l|||l|l|l|||l|l|l}
$9$ &  &  &  &  &  &  &  &  &  &  &  &  & \\
$8$ &  &  &  &  &  &  &  &  &  &  &  & \qquad\quad\quad\ \ $Y$ & \\
$7$ &  &  &  &  &  &  &  &  &  &  &  & \qquad\quad\, \ $HY$ & \\
$6$ &  &  &  &  &  &  &  &  & \qquad\qquad\ \ $Y$ &  &  & \qquad\quad\,
$H^{2}Y$ & \\
$5$ &  &  &  &  &  &  &  &  & \qquad\quad\, \ $HY$ &  &  & \qquad\quad\,
$H^{3}Y$ & \\
$4$ &  &  &  &  &  & \qquad\quad\ \ \!$Y$ &  &  & $S$ \qquad\ $H^{2}Y$ &  &
$U_{4}$ & $T$ & \\
$3$ &  &  & $S\overline{S},\overline{S}S$ &  &  & \qquad\ \ $HY$ &  & $U_{3}$
& $HS$ \quad\ $H^{3}Y$ &  & $H$ & $HT$ & \\
$2$ &  & $S,\overline{S}$ & $HS\overline{S},H\overline{S}S$ &  & $U_{2}$ & $T$
&  & $H$ & $H^{2}S$ &  & $H^{2}$ & $H^{2}T$ & \\
$1$ & $U_{1}$ & $HS,$ $H\overline{S}$ &  &  & $H$ & $HT$ &  & $H^{2}$ &
$H^{3}S$ &  & $H^{3}$ & $H^{3}T$ & \\
$0$ & $H$ &  &  &  & $H^{2}$ &  &  & $H^{3}$ &  &  & $H^{4}$ &  & \\\hline
& $0$ & $\frac{\pi}{2}$ & $\pi$ & \dots & \ $0$ & \qquad$\frac{\pi}{2}$ &
\dots & \ $0$ & \qquad\ $\frac{\pi}{2}$ & \dots & \ $0$ & \qquad\ $\frac{\pi
}{2}$ & \dots\\
\quad &  &  &  &  &  &  &  &  &  &  &  &  & \\
&  & $n=1$ &  &  &  & \quad$n=2$ &  &  & \quad\ \ $n=3$ &  &  & \quad\, $n=4$
&
\end{tabular}
}
\end{center}

\begin{remark}
\label{rmk:heredity} The maps $\mathcal{P}_{n}\rightarrow
\mathcal{P}_{n+1}$, $n\geq2$ induce injections in homology on the first two
columns of the above table. In particular
\[
H^{k}\rightarrow H^{k+1},H^{k}T\rightarrow H^{k+2}S,H^{k}S\rightarrow H^{k}T.
\]
Note that the degree of the generators on the other columns goes uniformly to $\infty$ as $n\to\infty$. This yields in particular the computation of the homology of the space 
$$
\mathcal P_\infty := \mathcal P_{\mathbb{R}P^\infty}\mathbb{C}P^\infty:=\lim_{\stackrel \longrightarrow n}Ê\mathcal P_n.
$$
The homology $H_\cdot(\mathcal P_\infty)$ is the limit of the first two columns in the above table as $n\to \infty$, it has rank $1$ in degree $0$ and rank $2$ in all the other degrees. 
\end{remark}

\subsection{Relations}

At this point we have already proved half of our main theorem, since we have
seen that $1,H,S,Y$, respectively $1,H,T,Y$ generate 
$H_{\cdot}(\mathcal{P}_{n})$ as a ring. We now have to identify the relations satisfied by these generators.

\subsubsection{\textbf{The case $n=1$.}}

\begin{lemma}
\label{lem:1} The following relations hold for $n=1$:
\[
PS+SP=P\overline{S}+\overline{S}P=U,\qquad S^{2}=0,\qquad\overline{S}^{2}=0.
\]

\end{lemma}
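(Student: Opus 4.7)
The plan is to establish the four identities using the two main techniques introduced in Section~\ref{sec:gen}: the geometric realization of Pontryagin-Chas-Sullivan products (Lemma~\ref{lem:geom*}) will handle $PS+SP=U$ and $P\overline{S}+\overline{S}P=U$, while the completing manifold framework of~\S\ref{sec:homologyPn}, combined with the observation that $S^2$ is supported on a component of $Y_2$ disjoint from the critical locus $L_2$, will yield $S^2=0$ and $\overline{S}^2=0$.

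For $PS+SP=U$, represent $P$ by a chosen point $x_0\in\mathbb{R}P^1$ viewed as a constant loop, and $S$ by the map $C:T^2\to\mathcal{P}_1$ sending $(x,x')\in T^2:=\mathbb{R}P^1\times\mathbb{R}P^1$ to the upper vertical half-circle from $x$ to $x'$, according to Remark~\ref{rmk:n=1}. The two evaluation maps on $T^2$ are the projections onto the factors and hence submersions, so the transversality hypothesis of Lemma~\ref{lem:geom*} is satisfied. Applying that lemma, $PS$ is represented by $C$ restricted to $\{x_0\}\times\mathbb{R}P^1$ and $SP$ by $C$ restricted to $\mathbb{R}P^1\times\{x_0\}$. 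These are the classes $C_\ast([a])$ and $C_\ast([b])$ of the two canonical generators of $H_1(T^2;\mathbb{Z}/2)$, and an elementary computation using the mod-$2$ intersection pairing gives $[a]+[b]=[\Delta]$ for $\Delta\subset T^2$ the diagonal. Since $C$ maps $\Delta\cong\mathbb{R}P^1$ diffeomorphically onto the submanifold of constant loops $\mathbb{R}P^1\subset\mathcal{P}_1$ (the half-circle with coinciding endpoints is by convention the constant path), we obtain $PS+SP=C_\ast([\Delta])=U$. The identity $P\overline{S}+\overline{S}P=U$ follows from the same argument applied to the lower hemisphere.

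For $S^2=0$, iterated use of Lemma~\ref{lem:geom*} together with the associativity of $c_{\min}$ shows that $S^2$ is represented by the restriction of $\varphi_2:Y_2\to\mathcal{P}_1^{\leq\pi}$ to the subspace $Y_2^{uu}:=S_1\,{_{\mathrm{ev}_1}}\!\!\times_{\mathrm{ev}_0}\!S_1\subset Y_2$ of pairs of upper half-circles with matching endpoints. The splitting $SN\mathbb{R}P^1=S^1\sqcup S^1$ into upper and lower sections induces decompositions $Y_1=S_1\sqcup\overline{S_1}$ and $Y_2=Y_2^{uu}\sqcup Y_2^{ul}\sqcup Y_2^{lu}\sqcup Y_2^{ll}$. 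An explicit calculation on $\mathbb{C}P^1\simeq S^2$ of radius $1/2$ shows that the derivative of the antipodal map sends the upper unit normal at $x$ to the lower unit normal at $x^\ast=-x$: geodesically continuing a maximal upper half-circle arriving at $x^\ast$ with velocity pointing into the lower hemisphere. Consequently, under the embedding~\eqref{eq:KkYk}, the critical manifold $L_2\subset Y_2$ is contained entirely in $Y_2^{ul}\sqcup Y_2^{lu}$, and in particular $L_2\cap Y_2^{uu}=\emptyset$. Applying the perturbation of Remark~\ref{rmk:completing-perturbed} along $-\nabla F$, the map $\varphi_2|_{Y_2^{uu}}$ is pushed into $\mathcal{P}_1^{<\pi}$, so $S^2$ lies in the image of $H_3(\mathcal{P}_1^{<\pi})\to H_3(\mathcal{P}_1)$. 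The explicit list of generators shows that the highest-degree generators of $H_\cdot(\mathcal{P}_1)$ supported on levels $0$ and $1$ appear in degree $2$, so $H_3(\mathcal{P}_1^{<\pi})=0$ and therefore $S^2=0$. The vanishing $\overline{S}^2=0$ follows by the identical argument applied to $Y_2^{ll}$.

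The main technical point will be the geometric verification that the embedded critical manifold $L_2$ avoids the components $Y_2^{uu}$ and $Y_2^{ll}$ of $Y_2$; once this specific feature of the Fubini-Study geometry is in place, the rest is a straightforward application of Lemma~\ref{lem:geom*} and the completing-manifold machinery.
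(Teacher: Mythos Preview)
Your proof is correct and follows essentially the same route as the paper. The argument for $PS+SP=U$ via the torus $\mathbb{R}P^1\times\mathbb{R}P^1$ and its diagonal is identical to the paper's. For $S^2=0$, the paper simply observes that the cycle representing $S^2$ ``does not contain any geodesics of length $\pi$'' and hence has critical level $<\pi$; your decomposition $Y_2=Y_2^{uu}\sqcup Y_2^{ul}\sqcup Y_2^{lu}\sqcup Y_2^{ll}$ together with the verification that $L_2\subset Y_2^{ul}\sqcup Y_2^{lu}$ is precisely the geometric content behind that sentence, made explicit through the completing-manifold framework.
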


\begin{proof}
To prove the relation $S^{2}=0$ we note that $S^{2}$ is represented by a cycle
in $\mathcal{P}_{1}^{\leq\pi}$ and this cycle does not contain any geodesics
of length $\pi$. Hence $\mathrm{Crit}(S^{2})<\pi$. On the other hand $S^{2}$
lives in degree $3$, but $H_{3}($ $\mathcal{P}_{1}^{<\pi})=0$ since $H_{\cdot
}($ $\mathcal{P}_{1}^{<\pi})$ is generated by $P$ and $U_{1}$. Thus $S^{2}$
must vanish. The relation $\overline{S}^{2}=0$ is proved similarly.

We now prove $PS+SP=U$. The Hopf fibration determines an orientation on
$\mathbb{R}P^{1},$ and at any point $x\in\mathbb{R}P^{1}$, the vector $v=IJx$
points into the ``upper hemisphere" $\ell^{+}$ of $\mathbb{C}P^{1}$. Then $S$
can be alternatively described as the class of the cycle $s:\mathbb{R}
P^{1}\times\mathbb{R}P^{1}\rightarrow\mathcal{P}_{1}$ that associates to a
pair of points $(x,x^{\prime})$ the unique vertical half-circle from $x$ to
$x^{\prime}$ and contained in $\ell^{+}$. The cycle $PS$ is represented by the
restriction of $s$ to $\{\ast\}\times\mathbb{R}P^{1}$, and $SP$ is represented
by the restriction of $s$ to $\mathbb{R}P^{1}\times\{\ast\}$. If we represent
the torus $\mathbb{R}P^{1}\times\mathbb{R}P^{1}$ as a square with opposite
sides identified, the cycles $PS$ and $SP$ correspond to two adjacent sides.
(See Figure~\ref{fig:torus}.) The cycle $U=U_{1}$ in turn corresponds to the
diagonal, hence $PS+SP=U$.

The relation $P\overline{S}+\overline{S}P=U$ can be proved in the same way.
\begin{figure}[ptb]
\begin{center}
\input{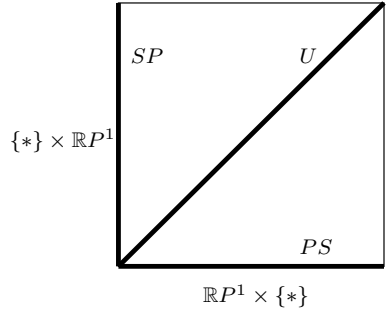}
\end{center}
\caption{The space of half-circles contained in the upper hemisphere on
$\mathbb{C}P^{1}$ is modeled on $\mathbb{R}P^{1}\times\mathbb{R}P^{1}$ using
the endpoints.}
\label{fig:torus}
\end{figure}
\end{proof}

\begin{proof}
[Proof of Theorem~\ref{thm:PCSproduct} in the case $n=1$]We have already
noticed above that
\[
Y=S+\overline{S}, \qquad H=P.
\]
The homology of $\mathcal{P}_{1}$ on level $\pi/2$ can alternatively be
described as
\[
H_{\cdot}(ST\mathbb{R}P^{1})[-1]=\langle HS,S, HY,Y\rangle,
\]
and more generally the homology of $\mathcal{P}_{1}$ on level $k\pi/2$ can be
described as
\[
H_{\cdot}(ST\mathbb{R}P^{1})[-k]=\langle HS,S,HY,Y\rangle\ast Y^{k-1} =
\langle HSY^{k-1}, SY^{k-1},HY^{k},Y^{k}\rangle.
\]
The relations in Lemma~\eqref{lem:1} imply
\[
HS+SH=1,\qquad HY+YH=0,\qquad S^{2}=0,
\]
as well as
\[
YS+SY=Y^{2}.
\]
Finally, we have $H^{2}=0$ since $H$ is the class of a constant path.
\end{proof}

\subsubsection{\textbf{The case $n\ge3$ odd.}}

\begin{lemma}
\label{lem:HSSHU} Let $n\ge3$ odd. The following relation holds
\[
HS+SH=U.
\]

\end{lemma}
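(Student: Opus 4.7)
The plan is to realize $HS$, $SH$, and $U$ as pushforwards under the evaluation map $C : S_1 \to \mathcal{P}_n$ of three explicit $n$-cycles inside $S_1 \cong \mathbb{R}P^n \times S^1_\pi$, and to prove the relation already in $H_n(S_1;\mathbb{Z}/2)$ by Poincar\'e duality together with K\"unneth, then push forward via $C_\ast$. This is the natural higher-dimensional analogue of the torus picture used to establish $PS + SP = U$ for $n = 1$ in Lemma~\ref{lem:1}.

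First I would identify the three cycles inside $S_1 = \{(x, IJx, \theta)\}$. Since $c_{\min}$ applied to a constant path is the identity on the nonconstant factor, Lemma~\ref{lem:geom*} yields $HS = C_\ast [\mathbb{R}P^{n-1} \times S^1_\pi]$ and $U = C_\ast [\mathbb{R}P^n \times \{0\}]$. For $SH$, the relevant endpoint evaluation is $\mathrm{ev}_1 \circ C : (x,\theta) \mapsto \psi_\theta(x) =: \Psi(x,\theta)$, where $\psi_\theta(x) = \exp_x(\theta Jx)$ is the Hopf flow on $\mathbb{R}P^n$; hence $SH = C_\ast [W]$ with $W := \Psi^{-1}(\mathbb{R}P^{n-1})$.

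The key step is to compute $[W] \in H_n(S_1;\mathbb{Z}/2)$. Each $\psi_\theta$ being an isometry, $\Psi$ is a submersion and $[W]$ is Poincar\'e dual to $\Psi^\ast h$, where $h \in H^1(\mathbb{R}P^n;\mathbb{Z}/2)$ is the generator. Writing
\[
\Psi^\ast h \;=\; a\,(h \otimes 1) + b\,(1 \otimes dt)
\]
in the K\"unneth splitting of $H^1(S_1;\mathbb{Z}/2)$, I would pin down the coefficients by restriction: restricting to $\mathbb{R}P^n \times \{0\}$, where $\Psi$ is the identity, gives $a = 1$; restricting to $\{x\} \times S^1_\pi$, the map $\theta \mapsto \psi_\theta(x)$ is a diffeomorphism onto the Hopf fiber $\mathbb{R}\ell_{x, IJx}$, which generates $H_1(\mathbb{R}P^n;\mathbb{Z}/2)$, giving $b = 1$. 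Therefore $[W] = [\mathbb{R}P^{n-1} \times S^1_\pi] + [\mathbb{R}P^n \times \{0\}]$ in $H_n(S_1;\mathbb{Z}/2)$, and applying $C_\ast$ yields $HS + SH = U$ in $H_n(\mathcal{P}_n;\mathbb{Z}/2)$.

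The only delicate point is the verification that $b = 1$, which rests on the Hopf $S^1$-action descending to a free $S^1_\pi$-action on $\mathbb{R}P^n$ whose orbits are embedded projective lines of length $\pi$, so that the orbit map $S^1_\pi \to \mathbb{R}\ell_{x,IJx}$ has degree one. This is precisely the structure that requires $n$ odd: only then does the global Hopf section $\sigma$ defining $S$ exist.
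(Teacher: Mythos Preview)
Your proof is correct and takes a genuinely different route from the paper's.

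The paper argues by elimination: for degree reasons $SH$ must be a $\mathbb{Z}/2$-combination of $U$, $HS$, and $H^nY$; a support argument in the critical manifold $K_1$ (Corollary~\ref{cor:crit2}) rules out any $H^nY$-contribution; and then the involution $A$ together with heredity from the case $n=1$ (Remark~\ref{rmk:heredity} and Lemma~\ref{lem:1}) eliminate the remaining three wrong possibilities $SH=0$, $SH=HS$, $SH=U$.

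Your argument is instead a direct computation inside $S_1\cong\mathbb{R}P^n\times S^1_\pi$: you realize $HS$, $SH$, $U$ as $C_\ast$ of explicit $n$-cycles and reduce the relation to a Poincar\'e-dual/K\"unneth calculation of $\Psi^\ast h$, where $\Psi$ is the Hopf flow map $(x,\theta)\mapsto\exp_x(\theta Jx)$. The two restriction checks ($a=1$ from $\theta=0$, $b=1$ from a Hopf orbit) are clean, and the identification $\mathrm{ev}_1\circ C=\Psi$ is exactly the paper's endpoint formula $x'=\exp_x(-\theta Iv)$ specialized to $v=IJx$. This is indeed the higher-dimensional analogue of the torus argument in Lemma~\ref{lem:1}, and it has the advantage of being self-contained: it does not invoke the involution $A$, the critical-set support argument, or the $n=1$ case. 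The paper's approach, on the other hand, illustrates how the structural tools (symmetry, heredity, min-max levels) constrain the ring without any cycle-level computation.
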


\begin{proof}
For index reasons, $SH$ is a linear combination of $U,HS,H^{n}Y$. We first
show that $SH$ is a linear combination of $U$ and $HS$. The image of the
section $\sigma:\mathbb{R}P^{n}\hookrightarrow ST\mathbb{R}P^{n}=K_{1}$ is a
closed set supporting homology classes whose image under the Thom isomorphism
$\tau:H_{\cdot}(K_{1})\rightarrow H_{\cdot+1}(\mathcal{P}_{n}^{\leq\frac{\pi
}{2}},\mathcal{P}_{n}^{<\frac{\pi}{2}})$ contains $HS$ and $SH$ (both of which
are represented by ``subsets of $S$") but not supporting $H^{n}Y$, since the
image of $\sigma$ is diffeomorphic to $\mathbb{R}P^{n}$, and $\mathrm{rk}
\,H_{n}(\mathbb{R}P^{n})=1$; but $HS$ and $H^{n}Y$ span a subspace of the
homology of rank 2. By Corollary~\ref{cor:crit2} the class $SH$ is a multiple
of $HS$ mod $H_{\cdot}(\mathcal{P}_{n}^{<\frac{\pi}{2}})$.

We also have:

\begin{itemize}
\item $SH\neq0$ by Lemma~\ref{lem:A} (since $HS\neq0$) .

\item $SH\neq HS$ (otherwise we would obtain $SH^{n}=H^{n}S$, whereas
Lemma~\ref{lem:1} and Remark~\ref{rmk:heredity} on heredity prove that
$SH^{n}+H^{n}S=H^{n-1}$).

\item $SH\neq U$ (otherwise $U=A(U)=A(SH)=HS$).
\end{itemize}

Thus $SH=HS+U$, which is equivalent to $SH+HS=U$.
\end{proof}

\begin{lemma}
\label{lem:SSbar} Let $n\geq1$ odd. The following relation holds
\[
S+\overline{S}=\left\{
\begin{array}
[c]{cl}
0, & \mbox{if } n\equiv3 \ (\operatorname{mod}4),\\
H^{n-1}Y, & \mbox{if } n\equiv1 \ (\operatorname{mod}4).
\end{array}
\right.
\]

\end{lemma}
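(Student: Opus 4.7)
The plan is to reduce the identity to a computation in $H_{n}(ST\mathbb{R}P^{n};\mathbb{Z}/2)$ for the Hopf section $\sigma:x\mapsto Jx$ and its antipode $-\sigma$, and then pin down the outcome by a Stiefel-Whitney / obstruction-theoretic argument. The case $n=1$ is already covered by Remark~\ref{rmk:n=1} and is consistent with the formula ($H^{0}Y=Y=S+\bar S$), so I assume $n\geq 3$.

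Under the identification $SN\mathbb{R}P^{n}\cong ST\mathbb{R}P^{n}$, $(x,v)\mapsto(x,-Iv)$, the section $x\mapsto IJx$ defining $S_{1}$ corresponds to $\sigma:x\mapsto Jx$, and $x\mapsto -IJx$ to $-\sigma$. Hence $S=\varphi_{1}([\sigma\times S^{1}_{\pi}])$, $\bar S=\varphi_{1}([(-\sigma)\times S^{1}_{\pi}])$, $Y=\varphi_{1}([Y_{1}])$, and by Lemma~\ref{lem:geom*} applied to the transversal fiber product of a linear $\mathbb{R}P^{1}\subset\mathbb{R}P^{n}$ with $Y_{1}$ over $\mathrm{ev}_{0}$, $H^{n-1}Y=\varphi_{1}([ST\mathbb{R}P^{n}|_{\mathbb{R}P^{1}}\times S^{1}_{\pi}])$. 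Factoring the common $S^{1}_{\pi}$, the assertion becomes the identity in $H_{n}(ST\mathbb{R}P^{n};\mathbb{Z}/2)$:
\[
[\sigma]+[-\sigma]=\begin{cases} 0, & n\equiv 3\pmod 4,\\ [F], & n\equiv 1\pmod 4,\end{cases}
\]
where $F:=ST\mathbb{R}P^{n}|_{\mathbb{R}P^{1}}$. From \S~\ref{sec:homologyPn}, for $n$ odd, $H_{n}(ST\mathbb{R}P^{n};\mathbb{Z}/2)=\mathbb{Z}/2\langle[\sigma]\rangle\oplus\mathbb{Z}/2\langle[F]\rangle$. Pushing forward by $\pi$ gives $\pi_{*}[\pm\sigma]=[\mathbb{R}P^{n}]$ whereas $\pi_{*}[F]=0$ (its support is $1$-dimensional and $n\geq 3$), forcing $[-\sigma]=[\sigma]+b\,[F]$ for some $b\in\mathbb{Z}/2$; so the whole question reduces to computing $b$.

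A homotopy of sections from $\sigma$ to $-\sigma$ can be realised as $H(x,t)=\cos(\pi t)Jx+\sin(\pi t)\eta(x)$ whenever $\eta$ is a nowhere-zero unit section of the orthogonal complement bundle $\xi:=T\mathbb{R}P^{n}\ominus\langle J\rangle$, of real rank $n-1$. Since $J$ nowhere vanishes, $\langle J\rangle$ is trivial and $w(\xi)=w(T\mathbb{R}P^{n})=(1+\alpha)^{n+1}$, whence $w_{n-1}(\xi)=\binom{n+1}{2}\alpha^{n-1}$. For $n\equiv 3\pmod 4$, $\binom{n+1}{2}$ is even so $w_{n-1}(\xi)=0$; even better, the quaternionic sections $J_{1},J_{2},J_{3}$ of \S~\ref{sec:Hopf} give the explicit homotopy $H(x,t)=\cos(\pi t)J_{1}x+\sin(\pi t)J_{2}x$, so $[\sigma]=[-\sigma]$ and $b=0$.

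For $n\equiv 1\pmod 4$, $w_{n-1}(\xi)=\alpha^{n-1}$ is Poincar\'e dual to $[\mathbb{R}P^{1}]$, so a generic smooth section $\eta$ of $\xi$ has transverse zero locus $Z\subset\mathbb{R}P^{n}$ of dimension $1$ with $[Z]=[\mathbb{R}P^{1}]\in H_{1}(\mathbb{R}P^{n};\mathbb{Z}/2)$. Outside a tubular neighbourhood of $Z$, $H$ gives an $n$-chain in $ST\mathbb{R}P^{n}$ with boundary $\sigma\sqcup(-\sigma)$ away from the tube; capping by the $S^{n-1}$-fibers of $ST\mathbb{R}P^{n}$ over $Z$, where on each tube-boundary fiber $\eta/|\eta|$ is a map $S^{n-2}\to S^{n-2}$ of odd mod-$2$ degree (the local index of the zero), yields a chain witnessing $[\sigma]+[-\sigma]=[ST\mathbb{R}P^{n}|_{Z}]=[F]$; so $b=1$. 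The main technical obstacle is precisely this cap-off step: converting the nonvanishing Stiefel-Whitney obstruction into a chain-level equality requires careful handling of the local model at each zero of $\eta$. An alternative that sidesteps the cap-off is to pair both sides against a dual cycle of complementary dimension in $ST\mathbb{R}P^{n}$ whose intersection numbers with $[\sigma]$, $[-\sigma]$ and $[F]$ are readily computable, thereby determining $b$ purely numerically.
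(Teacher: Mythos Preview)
Your reduction to the identity $[\sigma]+[-\sigma]=b[F]$ in $H_{n}(ST\mathbb{R}P^{n};\mathbb{Z}/2)$ and your treatment of the case $n\equiv 3\pmod 4$ via the quaternionic homotopy $\cos(\pi t)J_{1}x+\sin(\pi t)J_{2}x$ coincide with the paper's argument. The difference lies in the $n\equiv 1\pmod 4$ case: the paper determines $b$ by computing four explicit $\mathbb{Z}/2$-intersection numbers of $s,\bar s,h^{n-1}y$ against the dual generators $hs$ and $h^{n}y$ of $H_{n-1}(ST\mathbb{R}P^{n})$, carrying out a concrete transversality check at a single point for the decisive pairing $s\cap hs$. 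This is precisely the ``alternative'' you sketch in your last sentence. Your primary route instead reads $b$ off the Stiefel--Whitney class $w_{n-1}(T\mathbb{R}P^{n}\ominus\langle J\rangle)=\binom{n+1}{2}\alpha^{n-1}$, interpreting it as the obstruction to homotoping $\sigma$ to $-\sigma$ through sections; this is conceptually cleaner and explains \emph{why} the answer depends on $n\bmod 4$, but as you yourself note the cap-off step translating the obstruction cocycle into the chain-level equality $[\sigma]+[-\sigma]=[ST\mathbb{R}P^{n}|_{Z}]$ needs a careful local model. The paper's intersection-number computation buys a fully explicit, self-contained verification; your approach buys a structural explanation at the cost of invoking (or reproving) the standard correspondence between the primary obstruction to homotopy of sections and the difference of their homology classes.
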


\begin{proof}
We have already seen that this is true when $n=1$. By the second assertion in
Lemma~\ref{lem:completing} and because there is no homology in 
$\mathcal{P}_{n}$ in the degree of $S$ below level $\frac{\pi}{2}$,\textbf{ } it is enough
to prove the corresponding relation (under the Thom isomorphism) in the
homology of $SN\mathbb{R}P^{n}\simeq ST\mathbb{R}P^{n}$; let us denote by
$h^{k}s$ and $h^{k}y$ the generators in $H_{\cdot}(ST\mathbb{R}P^{n})$
corresponding to $H^{k}S$ and $H^{k}Y$. They are represented by
\begin{align*}
h^{k}s_{1}  &  :=\{(x,u)\in ST\mathbb{R}P^{n}:x\in\mathbb{R}P^{n-k}\text{ and
}u=Jx\}\\
h^{k}y_{1}  &  :=\{(x,u)\in ST\mathbb{R}P^{n}:x\in\mathbb{R}P^{n-k}\}
\end{align*}
The involution on $ST\mathbb{R}P^{n}$ corresponding to the involution
$\gamma\rightarrow\overline{\gamma}$ on $\mathcal{P}_{n}$ is the map
$(x,u)\rightarrow(x,-u)$. Thus the generator $\overline{s}$ in $H_{\cdot
}(ST\mathbb{R}P^{n})$ corresponding to $\overline{S}$ is represented by
\[
\overline{s}_{1}:=\{(x,u)\in ST\mathbb{R}P^{n}:x\in\mathbb{R}P^{n}\text{ and
}u=-Jx\}
\]
The group $H_{n}(ST\mathbb{R}P^{n})$ is generated by the elements $s$ and $h^{n-1}y$. The
dual group $H_{n-1}(ST\mathbb{R}P^{n})$ is generated by $hs$ and $h^{n}y$. The
reader can verify that
\begin{align*}
h^{n-1}y\cap h^{n}y  &  =0,\\
h^{n-1}y\cap hs  &  =\ast,\\
s\cap h^{n}y =\overline{s}\cap h^{n}y  &  =\ast,\\
\overline{s}\cap hs  &  =0.
\end{align*}
where $(\ast)$ is the generator of $H_{0}(ST\mathbb{R}P^{n})$. It remains to
compute $s\cap hs$.

Let $n\equiv3 \ (\operatorname{mod}4)$. In this case the one-parameter family
of sections $S^{n}\to ST\mathbb{S}^{n}$ given by $x\mapsto(x,u_{t})$ with
\[
u_{t}:=(\cos tJ_{1}+\sin tJ_{2})x
\]
descends to a family of sections $\sigma_{t}$: $\mathbb{R}P^{n}\rightarrow
ST\mathbb{R}P^{n}$ that represent $s$ but do not intersect $s_{1}$ for $t>0$.
Thus
\[
s\cap hs=0,
\]
so that $s$ and $\overline{s}$ have the same intersection numbers and are
therefore homologous.

Let now $n\equiv1 \ (\operatorname{mod}4)$. We claim that
\begin{equation}
\label{eq:shsast}s\cap hs=\ast,
\end{equation}
so that the classes $s$ and $\overline{s}+h^{n-1}y$ have the same intersection
numbers and are therefore equal.

The sphere $S^{n}$ is now the unit sphere in $\mathbb{H}^{(n-1)/4}
\oplus\mathbb{C}$. On $\mathbb{H}^{(n-1)/4}$ we have the complex structures
$J_{1}$ and $J_{2}$, while on $\mathbb{C}$ we have the complex structure
$J=J_{1}$.

\medskip

\noindent\textbf{Notation.} Given $x\in\mathbb{R}^{n+1}$, we write
\[
x=:(y,z)=:(y(x),z(x))
\]
where $y=(y_{0},...,y_{n-2})=(x_{0},...,x_{n-2})\in\mathbb{R}^{n-1}
\simeq\mathbb{H}^{(n-1)/4}$ and $z=(z_{0},z_{1})=(x_{n-1},x_{n})\in
\mathbb{R}^{2}\simeq\mathbb{C}$.

\medskip

The one-parameter family of sections $S^{n}\rightarrow ST\mathbb{S}^{n}$ given
by $(y,z)\rightarrow((y,z),u_{t})$ with
\[
u_{t}:=((\cos(t)J_{1}+\sin(t)J_{2})y,Jz)
\]
descends to a family of sections $\sigma_{t}:\mathbb{R}P^{n}\rightarrow
ST\mathbb{R}P^{n}$. Each of these sections represents $s$ and, if $t>0$,
$\sigma_{t}$ agrees with $\sigma=\sigma_{0}$ precisely when $y=0$, i.e. when
$x\in\mathbb{R} P^{1}=\{[0:0:...:0:y_{n-1}:y_{n}]\}$. In particular, each
section $\sigma_{t}$ intersects $hs_{1}=\{[x,u]\in ST\mathbb{R}P^{n}:y_{n}=0
\mbox{ and } u=Jx\}$ in exactly one point, namely the image in $ST\mathbb{R}
P^{n}$ of the point $(x_{\ast},u_{\ast})$ with
\begin{align*}
x_{\ast}  &  =(0,0,....,0,1,0),\\
u_{\ast}  &  =Jx=(0,0,....,0,1).
\end{align*}
To prove~\eqref{eq:shsast} it is enough to show that this intersection is
transverse for some fixed $t>0$. This is a local question for which we need to
verify that the tangent space $T_{(x_{\ast},u_{\ast})} ST\mathbb{S}^{n}$ is
spanned by the tangent space at $(x_{\ast},u_{\ast})$ to the submanifolds
\[
\mathcal{V}:=\{(x,u)\in ST\mathbb{S}^{n}:x\in S^{n-1}=\{y_{n}=0\} \mbox{ and }
u=Jx\}
\]
(whose image in $ST\mathbb{R}P^{n}$ is $hs_{1}$) and
\[
\mathcal{W}:=\{(x,u_{t} (x)):x\in\mathbb{S}^{n}\}
\]
(whose image represents $s$).

We view $ST\mathbb{S}^{n}$ as
\[
ST\mathbb{S}^{n}=\{(x,u)\in\mathbb{R}^{n+1}\times\mathbb{R}^{n+1}\, : \,
\|x\|=1,\ \|u\|=1,\ \langle x,u\rangle=0\},
\]
so that
\[
T_{(x,u)}ST\mathbb{S}^{n}=\{(\xi,\eta)\in\mathbb{R}^{n+1}\times
\mathbb{R}^{n+1}\, : \, \langle\xi,x\rangle= 0,\ \langle\eta,u\rangle=0,\ \langle
\xi,u\rangle+ \langle x,\eta\rangle=0\}.
\]
Thus
\[
T_{(x_{\ast},u_{\ast})}ST\mathbb{S}^{n}=\{(\xi,\eta)\, : \, \xi_{n-1}
=0,\ \eta_{n}=0, \ \xi_{n}+\eta_{n-1}=0\},
\]
whereas
\begin{align*}
T_{(x_{\ast},u_{\ast})}\mathcal{V }  &  = \{(\xi,\eta)\in 
T_{(x_{\ast},u_{\ast})}ST\mathbb{S}^{n}\, : \, \xi_{n}=0, \ \eta=J\xi\}\\
&  = \{(\xi,\eta)\, : \, z(\xi)=z(\eta)=0, \ y(\eta)=J_{1}y(\xi)\}
\end{align*}
and
\begin{align*}
T_{(x_{\ast},u_{\ast})}\mathcal{W }  &  = \{(\xi,\eta)\in 
T_{(x_{\ast},u_{\ast})}ST\mathbb{S}^{n}\, : \, y(\eta)=(\cos t J_{1}+ \sin t J_{2})
y(\xi)\}.
\end{align*}
For $t\notin\pi\mathbb{Z}$ we see that the intersection of the last two
subspaces is $\{0\}$, hence transverse because they have complementary dimensions.
\end{proof}

\begin{lemma}
\label{lem:YSSY} Let $n\geq3$ odd. The following relation holds
\[
YS+SY=\left\{
\begin{array}
[c]{cl}
0, & \mbox{if } n\equiv3 \ (\operatorname{mod}4),\\
H^{n-1}Y^{2}, & \mbox{if } n\equiv1 \ (\operatorname{mod}4).
\end{array}
\right.
\]
\end{lemma}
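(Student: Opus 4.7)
The plan is to reduce the statement to determining a single $\mathbb{Z}/2$-coefficient $b$, and then to identify $b$ by a Thom-preimage computation that bridges to Lemma~\ref{lem:SSbar}.

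First I would analyse where $YS+SY$ can live. By Lemma~\ref{lem:Crit}, $\mathrm{Crit}(YS;F)\le\mathrm{Crit}(Y;F)+\mathrm{Crit}(S;F)=\pi$, so $YS$ lives at level $\le 2$. In $\mathbb{H}$-degree $n+1$ the level-$0$ and level-$1$ blocks contribute nothing, and the level-$2$ block has exactly two generators, $SY$ and $H^{n-1}Y^2$. Hence $YS=a\,SY+b\,H^{n-1}Y^2$ for some $a,b\in\mathbb{Z}/2$. Applying $A$, using Lemma~\ref{lem:A} and $[H,Y]=0$ (so $A(SY)=YS$ and $A(H^{n-1}Y^2)=H^{n-1}Y^2$), one obtains $SY=a\,YS+b\,H^{n-1}Y^2$; substituting the first equation into the second forces $a=a^2$, hence $a=1$, so $YS+SY=b\,H^{n-1}Y^2$ with $b\in\{0,1\}$.

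To fix $b$ I would use the Thom isomorphism at level $2$ via Lemma~\ref{lem:completing-local} applied to the strong completing manifold $(Y_2,L_2,\varphi_2)$. Represent $SY$, $YS$ and $H^{n-1}Y^2$ by $\varphi_2$ restricted to the submanifolds $Z_{SY}=S_1\,{_{\mathrm{ev}_1}}\!\!\times_{\mathrm{ev}_0}\! Y_1$, $Z_{YS}=Y_1\,{_{\mathrm{ev}_1}}\!\!\times_{\mathrm{ev}_0}\! S_1$, and $Z=\{(p,q)\in Y_2:\mathrm{ev}_0(p)\in\mathbb{R}P^1\}$, checking in local coordinates that each is transverse to $L_2$. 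Recall that $L_2$ consists of pairs $((x,v,\tfrac{\pi}{2}),\phi(x,v,\tfrac{\pi}{2}))$ where $\phi$ is the time-$\tfrac{\pi}{2}$ geodesic flow on $SN\mathbb{R}P^n$. A direct calculation in the complex line $\ell_{x,IJx}$, viewed as a sphere of radius $\tfrac{1}{2}$, yields $\phi(x,IJx)=(x^*,-IJx^*)$: the endpoint velocity of the ``vertical'' great half-circle points into the \emph{opposite} hemisphere from $IJx^*$. Consequently, identifying $L_2$ with $SN\mathbb{R}P^n\simeq ST\mathbb{R}P^n$, one has $Z_{SY}\cap L_2=\{(x,IJx):x\in\mathbb{R}P^n\}=:\Sigma$ corresponding to the class $s$ of Lemma~\ref{lem:SSbar}, while $Z_{YS}\cap L_2=\phi^{-1}(\Sigma)=\phi(\Sigma)=\{(y,-IJy):y\in\mathbb{R}P^n\}$ (using that $\phi$ is an involution) corresponds to $\bar s$, and $Z\cap L_2=SN\mathbb{R}P^n|_{\mathbb{R}P^1}$ corresponds to $h^{n-1}y$.

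Lemma~\ref{lem:SSbar} asserts exactly that $s+\bar s=0$ for $n\equiv 3\!\pmod 4$ and $s+\bar s=h^{n-1}y$ for $n\equiv 1\!\pmod 4$. Hence the Thom preimage of $b\,H^{n-1}Y^2=YS+SY$ equals $0$ or $h^{n-1}y$ respectively, forcing $b=0$ or $b=1$. The main technical obstacle is the sign computation $v_0^*=-IJx_0^*$, which identifies $Z_{YS}\cap L_2$ with the opposite section $\phi(\Sigma)$ rather than with $\Sigma$; it is this sign that makes Lemma~\ref{lem:SSbar} the exact input needed to finish the proof.
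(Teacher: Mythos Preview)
Your argument is correct and follows essentially the same route as the paper: both reduce to Lemma~\ref{lem:SSbar} via the Thom isomorphism on the strong completing manifold $(Y_2,L_2,\varphi_2)$, and the decisive geometric input is the same --- the endpoint velocity of the great half-circle starting at $(x,IJx)$ is $(x^*,-IJx^*)$, so the geodesics in $Z_{YS}\cap L_2$ are exactly those whose initial data lie on the \emph{opposite} section $\bar s$. The paper packages this as the identity $YS=\overline{S}Y$ (its cycles $\mathcal V$ and $\mathcal W$ meet $L_2$ in the same set) and then multiplies out $(S+\overline S)Y$; you instead compute the three Thom preimages $s$, $\bar s$, $h^{n-1}y$ separately and read off the coefficient --- a slightly more explicit but equivalent bookkeeping.

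One small slip: ``substituting forces $a=a^2$, hence $a=1$'' is not a constraint over $\mathbb{Z}/2$. What the substitution actually gives (using that $SY$ and $H^{n-1}Y^2$ are linearly independent generators of the level-$2$ block in this degree) is $a^2=1$, whence $a=1$; equivalently, $a=0$ would force $SY=bH^{n-1}Y^2$, contradicting independence. This is a one-line fix and does not affect the rest of your argument.
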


\begin{proof}It is enough to prove that 
\begin{equation}
YS=\overline{S}Y,  \label{eq:YSSbarY}
\end{equation}
from which the result follows in view of Lemma~\ref{lem:SSbar}. In turn,
it is enough to prove the relation that corresponds to~\eqref{eq:YSSbarY}
under the Thom isomorphism in the homology of $SN\mathbb{R}P^{n}\simeq ST
\mathbb{R}P^{n}$. This reduction is made possible by the second assertion in
Lemma~\ref{lem:completing} because there is no homology in $\mathcal{P}_{n}$
in the degree of $YS$ and $\overline{S}Y$ below level $\pi$  (compare with the
proof of Lemma~\ref{lem:SSbar}).

Recall the Thom isomorphism $H_{\cdot }(Y_{2},Y_{2}\setminus L_{2})\simeq
H_{\cdot }(L_{2})$ from Remark~\ref{rmk:perfect}. The classes $YS$ and $
\overline{S}Y$ are represented by spaces of concatenations $c(\gamma ,\delta
)$ of vertical half-circles in the image of $Y_{2}$ as follows. Recall from~\S\ref{sec:Hopf} the definition of the upper hemisphere $\ell_x^+$ of the complex line $\ell_x$ containing the Hopf fiber through $x\in\mathbb{R}P^n$, and denote $\ell_x^-:=\overline{\ell_x^+}$ the corresponding lower hemisphere. For $YS$ a representing submanifold is 
\[
\mathcal{V}:=\{c(\gamma,\delta) \, : \, \delta \subset \ell_{\delta (0)}^{+}=\ell_{\gamma(1)}^{+}\},
\]
whereas for $\overline{S}Y$ a representing submanifold is 
\[
\mathcal{W}:=\{c(\gamma,\delta) \, : \, \gamma \subset \ell_{\gamma
(1)}^{-}=\ell_{\delta (0)}^{-}\}.
\]
Note that $\mathcal{V}$ and $\mathcal{W}$ contain the same geodesics, since
for a smooth geodesic $c(\gamma,\delta)$, the first half $\gamma$ is in
the lower hemisphere $\ell _{\gamma (1)}^{-}$ if and only if the second half 
$\delta$ is in the upper hemisphere $\ell_{\delta (0)}^{+}$. Thus the
statement~\eqref{eq:YSSbarY} will follow from 
Lemma~\ref{lem:completing-local}(b) once we establish transversality. But both
submanifolds $\mathcal{V}$ and $\mathcal{W}$ have codimension $n-1$ and
intersect with $L_{2}$ transversally. Indeed, given a geodesic 
$c(\gamma,\delta)\in\mathcal{V}\cap L_{2}$, the tangent vectors to $L_{2}$
that correspond to variations of the tangent vector 
$\dot\gamma(1)=\dot\delta(0)$ inside $ST_{\gamma_{p}(\frac{\pi}{2})}
\mathbb{R}P^{n}$ while keeping the midpoint of the geodesic fixed, span a
subspace of dimension $n-1$ that is complementary to 
$T_{c(\gamma,\delta)}\mathcal{V}$. 
The same argument applies to $\mathcal{W}$.
\end{proof}

\begin{lemma}
\label{lem:S2=0} Let $n\geq3$ odd. The following relation holds
\[
S^{2}=0.
\]

\end{lemma}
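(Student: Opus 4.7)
My plan is to adapt the argument used in Lemma~\ref{lem:1} for the $n=1$ case: show that the min-max critical level of $S^2$ with respect to $F$ is strictly below $\pi$, and then observe that the homology of $\mathcal{P}_n^{<\pi}$ vanishes in the degree $2n+2$ of $S^2$. Lemma~\ref{lem:Crit} immediately gives the soft bound $\mathrm{Crit}(S^2;F)\leq 2\,\mathrm{Crit}(S;F)=\pi$, so the task reduces to making this inequality strict.

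The first step will be to exhibit a concrete geometric representative of $S^2$ via Lemma~\ref{lem:geom*}: its image consists of concatenations $c(C_{x_0,IJx_0,\theta_0},C_{x_1,IJx_1,\theta_1})$ where $x_1$ lies on the Hopf fiber through $x_0$. The key geometric step will be to argue that this image avoids the critical manifold $K_2$. Any point of the image lying at level $\pi$ must be a smooth length-$\pi$ geodesic, forcing $\theta_0=\theta_1=\pi/2$; the first half then coincides with $\gamma_{x_0,IJx_0}|_{[0,\pi/2]}$, whose terminal tangent at $x_1=x_0^\ast$ points into the lower hemisphere of $\ell_{x_0,IJx_0}$. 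The second half-circle starts at $x_1$ with tangent $+IJx_1$ by the very definition of $S$, and these two tangents differ by a sign, yielding an unavoidable kink. Since the compact image of the $S^2$-cycle is therefore disjoint from $K_2$, a short time of the $-\nabla F$ flow deforms it strictly into $\mathcal{P}_n^{<\pi}$, giving $\mathrm{Crit}(S^2;F)<\pi$.

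The conclusion then follows from a degree count. The class $S^2$ lies in the image of $H_{2n+2}(\mathcal{P}_n^{<\pi})\to H_{2n+2}(\mathcal{P}_n)$, and by Theorem~\ref{thm:HPn} this group is the sum of the level-$0$ block $H_\cdot(\mathbb{R}P^n)$ (concentrated in degrees $\leq n$) and the level-$1$ block $H_{\cdot-1}(ST\mathbb{R}P^n)$, which for odd $n$ is $(H_\cdot(\mathbb{R}P^n)\otimes H_\cdot(S^{n-1}))[-1]$, concentrated in degrees $\leq 2n$. Since $2n+2>2n$, both blocks vanish in degree $2n+2$, and hence $S^2=0$.

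The main obstacle is the tangent-matching claim at the midpoint. It requires verifying that the ``upward'' normal direction at $x_0$, which is $IJx_0$, transports along the Hopf fiber precisely to $+IJx_1$ at $x_1=x_0^\ast$. This follows from the parallelism of the complex structures $I$ and $J$ along the Hopf geodesic together with the standard fact that on a round $2$-sphere parallel transport along the equator preserves the outward normal direction to the equator. Once this identification is in place the sign mismatch $\pm IJx_1$ between the two halves of the concatenation is immediate, and the remainder of the argument is formal.
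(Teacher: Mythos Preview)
Your degree computation is wrong, and this breaks the argument. The Pontryagin--Chas--Sullivan product is defined on $\mathbb{H}_\cdot=H_{\cdot+n}$ with $\mathbb{H}_i\otimes\mathbb{H}_j\to\mathbb{H}_{i+j}$; in unshifted terms this means $H_a\otimes H_b\to H_{a+b-n}$. Since $S\in H_{n+1}(\mathcal P_n)$ (equivalently $|S|=1$ in $\mathbb{H}$), the product $S^2$ lies in $H_{n+2}(\mathcal P_n)$, not $H_{2n+2}$. (Compare the $n=1$ case in Lemma~\ref{lem:1}, where the paper explicitly says ``$S^2$ lives in degree~$3$''.) With the correct degree $n+2$, the block on level~$1$ does \emph{not} vanish: for $n\ge 3$ odd the class $H^{n-2}Y$ sits in $H_{n+2}(\mathcal P_n^{\le\pi/2})$. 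So even granting your geometric kink argument (which is correct --- the terminal tangent $v^*$ of the first great half-circle points into $\ell_{x_0}^-$, while $IJx_1=IJ(-x_0)=-IJx_0$ points into $\ell_{x_0}^+$, hence the concatenation is never smooth), the conclusion $\mathrm{Crit}(S^2;F)<\pi$ only tells you $S^2\in\langle H^{n-2}Y\rangle$, not $S^2=0$.

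The paper's proof avoids critical levels altogether. It observes that the fiber product $S_1\,{_{\mathrm{ev}_1}}\!\times_{\mathrm{ev}_0}S_1$ representing $S^2$ lands, after concatenation, in the space $\widetilde{S_1}$ of all paths contained in some upper hemisphere $\ell_{\gamma(0)}^+$. Since each $\ell_x^+$ is a disc, there is a deformation retraction $\widetilde{S_1}\to S_1$, so $H_k(\widetilde{S_1})=0$ for $k>\dim S_1=n+1$. As $S^2$ factors through $H_{n+2}(\widetilde{S_1})=0$, it vanishes. This argument is sharper precisely because it pins down the cycle inside a space of the right dimension, rather than merely bounding its critical level.
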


\begin{proof}
Denote $I:=[0,1]$ and let
\[
\widetilde{S_{1}}=\{\gamma:(I,\partial I)\rightarrow
(\mathbb{C}P^{n},\mathbb{R}P^{n}):\gamma\subset\ell_{\gamma(0)}^{+}\}.
\]
There is a strong deformation retraction $\widetilde{S_{1}}\rightarrow S_{1}$
that preserves the space of paths in $\widetilde{S_{1}}$ beginning at $x$ and
ending at $x^{\prime}$ for each pair $(x,x^{\prime})$ with 
$x\in\mathbb{R}P^{n}$ and $x^{\prime}\in\mathbb{R}\ell_{x}$ (the Hopf fiber of $x$). This can be accomplished as follows: there
is a natural continuous family of diffeomorphisms parameterized by 
$x\in\mathbb{R}P^{n}:$
\[
\chi_{x}:D^{2}\rightarrow\ell_{x}^{+}
\]
where $D^{2}$ is a disk of radius $1$ with the flat metric. If $x$ and
$x^{\prime}$ belong to the same Hopf fiber then $\chi_{x}^{-1}\chi_{x^{\prime
}}$ is an isometry of $D^{2}$. We then use affine interpolation, with the
inherited affine structure from $D^{2},$ between an arbitrary path
$\gamma:[0,1]\rightarrow\ell_{\gamma(0)}^{+}$ and the vertical half circle in
$\ell_{\gamma(0)}^{+}$ with the same endpoints. (This works because $D^{2}$ is
convex.) Thus the inclusion $S_{1}\hookrightarrow\widetilde{S_{1}}$ is a
homotopy equivalence. In particular $H_{\cdot}(\widetilde{S_{1}})=0$ in
degrees greater than $\dim S_{1}=n+1$. On the other hand, in view of
Lemma~\ref{lem:geom*} the degree $n+2$ homology class $S^{2}$ is represented
by
\[
S_{1}\,{_{\mathrm{ev}_{1}}}\!\!\times_{\mathrm{ev}_{0}}S_{1}:=\{\gamma
\cdot\delta:\gamma,\delta\in S_{1}\mbox{ and }\gamma(1)=\delta(0)\}
\]
since the self intersection is transverse over the evaluation map. (Here we
identify $S_{1}$ with its image in $\mathcal{P}_{n}$.) Clearly 
$S_{1}\,{_{\mathrm{ev}_{1}}}\!\!\times_{\mathrm{ev}_{0}}S_{1}\subset\widetilde
{S_{1}}$. Thus the class $S^{2}\in H_{n+2}(\mathcal{P}_{n})$ is in the image
of $H_{n+2}(\widetilde{S_{1}})$ and therefore vanishes.
\end{proof}

\begin{lemma}
\label{lem:HYYH} For all $n\ge2$ we have
\[
HY=YH.
\]

\end{lemma}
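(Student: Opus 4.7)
The plan is to exhibit geometric representatives of $HY$ and $YH$ inside the completing manifold $Y_1$ of vertical half-circles, and then to compare them via the Thom isomorphism for the level-$1$ critical manifold $K_1$. By Lemma~\ref{lem:geom*}, using the representatives $\mathbb{R}P^{n-1}\hookrightarrow\mathcal{P}_n$ of $H$ (as constant loops) and $C:Y_1\to\mathcal{P}_n$ of $Y$, the class $HY$ is represented by $C:Z_1\to\mathcal{P}_n$ with $Z_1:=\mathrm{ev}_0^{-1}(\mathbb{R}P^{n-1})\subset Y_1$, and $YH$ by $C:Z_2\to\mathcal{P}_n$ with $Z_2:=\mathrm{ev}_1^{-1}(\mathbb{R}P^{n-1})\subset Y_1$; the transversality hypothesis holds since $\mathrm{ev}_0\circ C$ and $\mathrm{ev}_1\circ C$ are submersions. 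Both cycles land in $\mathcal{P}_n^{\leq\pi/2}$, and because the homological degree of $HY$ and $YH$ is $2n-1>n$ for $n\geq 2$, neither class has a level-$0$ component. It therefore suffices to prove equality in the level-$1$ block, which under the Thom isomorphism is identified with $H_\cdot(L_1)\simeq H_\cdot(ST\mathbb{R}P^n)$.

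Next I will apply Lemma~\ref{lem:completing-local}(b) together with the chain of identifications $L_1\equiv SN\mathbb{R}P^n\equiv ST\mathbb{R}P^n$, $(x,v)\mapsto (x,u):=(x,-Iv)$, to identify $HY$ and $YH$ with classes $[A_0]$ and $[A_1]$ in $H_\cdot(ST\mathbb{R}P^n)$, where
\[
A_0=\{(x,u)\in ST\mathbb{R}P^n\,:\,x\in\mathbb{R}P^{n-1}\},\qquad A_1=\{(x,u)\in ST\mathbb{R}P^n\,:\,\exp_x(\tfrac{\pi}{2}u)\in\mathbb{R}P^{n-1}\}.
\]
The transversality of $Z_i$ with $L_1\subset Y_1$ needed to apply Lemma~\ref{lem:completing-local}(b) is straightforward: for $Z_1$, the $S^1_\pi$-direction in $Y_1$ lies in $TZ_1$ but not in $TL_1$; for $Z_2$, it follows from the fact that $\mathrm{ev}_1|_{L_1}:(x,v)\mapsto\exp_x(\tfrac{\pi}{2}u)$ is a submersion onto $\mathbb{R}P^n$, which is checked via a standard Jacobi field computation using that $\mathbb{R}P^n$ has constant sectional curvature.

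The decisive observation is that $A_0$ and $A_1$ are related by the geodesic flow $\phi_t:ST\mathbb{R}P^n\to ST\mathbb{R}P^n$: directly from the definitions, $(x,u)\in A_1$ iff the base point of $\phi_{\pi/2}(x,u)$ lies in $\mathbb{R}P^{n-1}$, iff $\phi_{\pi/2}(x,u)\in A_0$, so $\phi_{\pi/2}(A_1)=A_0$. Since $\{\phi_s\}_{s\in[0,\pi/2]}$ is a smooth isotopy from the identity to $\phi_{\pi/2}$, we have $(\phi_{\pi/2})_{\ast}=\mathrm{id}$ on $H_\cdot(ST\mathbb{R}P^n)$, whence $[A_1]=[A_0]$ and $HY=YH$. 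The main conceptual step is spotting the geodesic-flow relation between the two representatives; once that is in hand, the isotopy of $\phi_{\pi/2}$ to the identity closes the argument.
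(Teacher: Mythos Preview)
Your proof is correct and takes a genuinely different route from the paper's. The paper argues indirectly: since $A(YH)=HY$ (Lemma~\ref{lem:A}) and $HY\neq 0$, one has $YH\neq 0$; then $\mathrm{Crit}(YH)\le\mathrm{Crit}(Y)+\mathrm{Crit}(H)=\pi/2$ by Lemma~\ref{lem:Crit}, and the homology computation shows that the only nonzero class in degree $2n-1$ with critical level $\le\pi/2$ is $HY$, forcing $YH=HY$. This is very short but leans on the prior determination that $H_{2n-1}(\mathcal{P}_n^{\le\pi/2};\mathbb{Z}/2)$ is one-dimensional.

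Your argument is more explicit: you build geometric representatives $Z_1,Z_2\subset Y_1$ for $HY$ and $YH$ via Lemma~\ref{lem:geom*}, pass to the level-$1$ block via Lemma~\ref{lem:completing-local}(b), and then observe that the resulting classes $[A_0],[A_1]\in H_{2n-2}(ST\mathbb{R}P^n)$ differ by the time-$\pi/2$ geodesic flow, which is isotopic to the identity. This avoids appealing to the rank of the homology in that degree and provides an actual isotopy between representatives. One small simplification: your claim that $\mathrm{ev}_1|_{L_1}$ is a submersion follows immediately from writing it as the base projection composed with the geodesic-flow diffeomorphism $\phi_{\pi/2}$---no Jacobi field computation is needed. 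The paper's argument is quicker given the surrounding machinery; yours is self-contained and would generalize to situations where the relevant homology group is not already known to be one-dimensional.
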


\begin{proof}
Since $A(YH)=HY$ we obtain $YH\neq0$. Now $\mathrm{Crit}(YH)\le\mathrm{Crit}
(Y)+\mathrm{Crit}(H)=\pi/2$. On the other hand $YH$ lives in degree $2n-1$,
and the only non-zero class in this degree and critical level $\le\pi/2$ is
$HY$. Thus $YH=HY$.
\end{proof}

\emph{This proves Theorem~\ref{thm:PCSproduct} in the case $n$ odd.}

\subsubsection{\textbf{The case $n\ge2$ even.}}

\begin{lemma}
Let $n\ge2$ even. The following relation holds
\[
TY+YT=Y.
\]

\end{lemma}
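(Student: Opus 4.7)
The strategy is to use the critical-level filtration of Theorem~\ref{thm:HPn} to constrain $TY+YT$, kill the top-level contribution by $A$-symmetry, and then identify the lower-level contribution.

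\emph{Constraining the class.} By Lemma~\ref{lem:Crit} one has
\[
\mathrm{Crit}(TY+YT;F)\le\max\big(\mathrm{Crit}(TY;F),\mathrm{Crit}(YT;F)\big)\le\mathrm{Crit}(T;F)+\mathrm{Crit}(Y;F)=\pi,
\]
so $TY+YT$ lies in the image of $H_{2n}(\mathcal P_n^{\le\pi})\to H_{2n}(\mathcal P_n)$. From the enumeration of generators for $n$ even, the degree-$2n$ part of this image is spanned by $Y$ (at level $1$) and $TY$ (at level $2$), so $TY+YT=aY+bTY$ for some $a,b\in\mathbb Z/2$.

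\emph{Vanishing at level $\pi$ ($b=0$).} Let $Z_{TY},Z_{YT}\subset Y_2$ be the fiber-product submanifolds representing $TY$ and $YT$ via $\varphi_2$. They are transverse to $L_2\subset Y_2$, and under the identifications $L_2\cong SN\mathbb R P^n\cong ST\mathbb R P^n$ a direct computation gives
\[
Z_{TY}\cap L_2=\sigma(\mathbb R P^{n-1}),
\]
where $\sigma$ is the Hopf section from~\eqref{eq:sigma}. By Lemma~\ref{lem:A} we have $A(TY)=YT$, and since closed geodesics in $K_2$ are time-reversal invariant up to reparametrization, the induced action of $A$ on $L_2\cong SN\mathbb R P^n$ is $(x,v)\mapsto(x,-v)$; therefore
\[
Z_{YT}\cap L_2=(-\sigma)(\mathbb R P^{n-1}).
\]
From the mod-$2$ computation of $H_\cdot(ST\mathbb R P^n)$ carried out earlier, $H_{n-1}(ST\mathbb R P^n;\mathbb Z/2)\cong\mathbb Z/2$, and the fiberwise antipodal self-diffeomorphism of $ST\mathbb R P^n$ induces the identity on $\mathbb Z/2$-homology, so $[\sigma(\mathbb R P^{n-1})]=[(-\sigma)(\mathbb R P^{n-1})]$ and their sum vanishes. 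By Lemma~\ref{lem:completing-local}(b) this shows $b=0$, so $TY+YT=aY$.

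\emph{Identifying $a=1$.} This is the main technical obstacle. The plan is to construct an explicit $(2n+1)$-chain $W\subset\mathcal P_n^{\le\pi}$ exhibiting the descent of $Z_{TY}+Z_{YT}$ from level $\pi$ to level $\pi/2$, and to check that the resulting $2n$-cycle in $\mathcal P_n^{\le\pi/2}$ is homologous to a representative of $Y$ (rather than being null-homologous, which would correspond to $a=0$). The construction uses the one-parameter degeneration $\theta_\delta\to 0$ of the $T$-factor in each of $Z_{TY}$ and $Z_{YT}$; as the length of the $T$-half-circle shrinks, the second endpoint slides along the Hopf fiber of $x$, and the data $v_\delta=IJx$ (from $Z_{TY}$) pair with the reversed data $v_\delta=-IJx$ (from $Z_{YT}$) to sweep out the full $S^1_\pi$-fiber of the bundle $Y_1\to SN\mathbb R P^n$. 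Carrying out the matching and verifying that the glued cycle covers $Y_1$ exactly once in $\mathbb Z/2$ yields the desired cobordism and completes the proof.
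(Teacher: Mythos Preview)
Your approach differs substantially from the paper's, and your Step~3 is not a proof but a plan whose execution is unclear.

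The paper's argument is purely algebraic via the \emph{Heredity Principle}. Writing $YT=\alpha TY+\beta Y$ for index reasons, one pushes forward by $f_n:H_\cdot(\mathcal P_n)\to H_\cdot(\mathcal P_{n+1})$ (with $n+1$ odd) and computes $f_nT=H^2S$, $f_nY=H^2Y$, $f_n(TY)=H^3SY+H^2Y$, $f_n(YT)=H^3SY$, using the already-established odd-case relations $HS+SH=U$, $HY=YH$, and $YS+SY\in\{0,H^{n}Y^2\}$. Comparing coefficients of the linearly independent classes $H^3SY$ and $H^2Y$ immediately gives $\alpha=1$, $\beta=1$, hence $TY+YT=Y$. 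No geometry at level~$\pi$ is needed.

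Your Steps~1--2 (critical-level bound, and $b=0$ via $A$-symmetry plus the fact that fiberwise antipodal acts trivially on $H_\cdot(ST\mathbb R P^n;\mathbb Z/2)$) are reasonable, though you should justify transversality of $Z_{TY}$, $Z_{YT}$ with $L_2$ rather than assert it. The genuine gap is Step~3: you acknowledge it as ``the main technical obstacle'' and only sketch a cobordism. The sketch itself is problematic---in $T_1$ the parameter $\theta$ already ranges over all of $S^1_\pi$, so ``the degeneration $\theta_\delta\to 0$'' does not describe an extra one-parameter family; you have not specified what chain $W$ actually is, nor why its boundary is $Z_{TY}+Z_{YT}+(\text{representative of }Y)$ rather than $Z_{TY}+Z_{YT}+0$. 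Distinguishing $a=1$ from $a=0$ is exactly the content of the lemma, and your outline does not resolve it. The heredity argument bypasses this difficulty entirely by reducing to the odd case.
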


\begin{proof}
For index reasons we have $YT=\alpha TY+\beta Y$.

Recall from~\S \ref{sec:gen}(4) the inclusion $\mathcal{P}_{n}\hookrightarrow
\mathcal{P}_{n+1}$ determined by a real hyperplane embedding $(\mathbb{C}
P^{n},\mathbb{R} P^{n})\hookrightarrow(\mathbb{C}P^{n+1},\mathbb{R}P^{n+1})$
and the induced map in homology $f_{n}:H_{\cdot}(\mathcal{P}_{n})\to H_{\cdot
}(\mathcal{P}_{n+1})$. It follows from the geometric description of our cycles
that
\[
f_{n}T=H^{2}S, \qquad f_{n}Y=HYH=H^{2}Y,
\]
\[
f_{n}(TY)=H^{2}SYH=H^{2}SHY=H^{3}SY+H^{2}Y,
\]
\[
f_{n}(YT)=HYH^{2}S=H^{3}YS= \left\{
\begin{array}
[c]{ll}
H^{3}SY, & \! \! n+1\equiv3 \ (\mathrm{mod}\ 4)\\
H^{3} SY + H^{n+2} Y, & \! \! n+1 \equiv1\ (\mathrm{mod}\ 4)
\end{array}
\right.  \! = H^{3}SY.
\]
Thus $H^{3}SY=f_{n}(YT)=\alpha f_{n}(TY)+\beta f_{n}Y=\alpha H^{3}
SY+(\alpha+\beta) H^{2}Y$. Since $H^{3}SY$ and $H^{2}Y$ are linearly
independent, this implies $\alpha=1$ and $\beta=0$.
\end{proof}

\begin{lemma}
Let $n\ge2$ even. The following relation holds
\[
TH+HT=H.
\]

\end{lemma}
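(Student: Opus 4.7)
The plan is to mimic the heredity argument of the preceding proof of $[T,Y]=Y$. Since $\mathrm{Crit}(H)=0$ and $\mathrm{Crit}(T)=\pi/2$, Lemma~\ref{lem:Crit} gives $\mathrm{Crit}(TH)\le\pi/2$, so $TH$ lies in the image of $H_{n-1}(\mathcal{P}_n^{\le\pi/2})\to H_{n-1}(\mathcal{P}_n)$. Reading off the generator table, in unshifted degree $n-1$ this image is precisely $\langle H, HT\rangle$: no generator at any higher level contributes in so low a degree. Consequently, for index reasons, $TH=\alpha HT+\beta H$ for some $\alpha,\beta\in\mathbb{Z}/2$, and the task reduces to showing $\alpha=\beta=1$.

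To pin down the coefficients I would apply the heredity map $f_n\colon H_\cdot(\mathcal{P}_n)\to H_\cdot(\mathcal{P}_{n+1})$. The geometric descriptions give $f_n H=H^2$ and $f_n T=H^2 S$ exactly as in the preceding proof, provided one fixes a compatible Hopf section on $\mathbb{R}P^{n-1}\subset\mathbb{R}P^{n+1}$. For $f_n(HT)$, the fiber product of the naive representatives of $H$ and $T$ in $\mathcal{P}_n$ is not transverse over $\mathbb{R}P^n$ since $\mathrm{ev}_1(T)\subseteq\mathbb{R}P^{n-1}$; after perturbing the $H$-cycle to a generic hyperplane $\mathbb{R}P^{n-1}_\varepsilon$ meeting $\mathbb{R}P^{n-1}$ in a $\mathbb{R}P^{n-2}$, Lemma~\ref{lem:geom*} realises $HT$ by the cycle $\{C_{x,IJx,\theta}:x\in\mathbb{R}P^{n-2}\}$, and this pushes to the cycle representing $H^3 S$ in $\mathcal{P}_{n+1}$; hence $f_n(HT)=H^3 S$. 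For $f_n(TH)$ I would exploit the fact that $A$ commutes with $f_n$ (both are induced by natural maps of path spaces) together with Lemma~\ref{lem:A}: $f_n(TH)=f_n(A(HT))=A(f_n(HT))=A(H^3 S)$. Since $AH=H$ and $AS=S$ (Lemma~\ref{lem:AS1}), this equals $SH^3$; iterating the relation $SH=HS+U$ from Lemma~\ref{lem:HSSHU} in the odd ring $\mathcal{P}_{n+1}$ yields $SH^3=H^3 S+H^2$ over $\mathbb{Z}/2$.

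Substituting into $f_n(TH)=\alpha f_n(HT)+\beta f_n(H)$ gives $H^3 S+H^2=\alpha H^3 S+\beta H^2$. Because $H^2$ and $H^3 S$ sit respectively on level~$0$ and level~$1$ of the perfect Morse decomposition of $H_{n-1}(\mathcal{P}_{n+1})$ from Theorem~\ref{thm:HPn}, they are linearly independent, forcing $\alpha=\beta=1$ and therefore $TH+HT=H$.

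The main obstacle I expect is the identification $f_n(HT)=H^3 S$: besides the transversality perturbation of the $H$-cycle needed before invoking Lemma~\ref{lem:geom*}, one must choose the linear embedding $\mathbb{R}P^{n-1}\hookrightarrow\mathbb{R}P^{n+1}$ compatibly with the complex-pair block decomposition of the ambient $J$-structure, so that the Hopf section on $\mathbb{R}P^{n-1}$ used to define $T$ is the restriction of the global one on the odd $\mathbb{R}P^{n+1}$. With these choices in place the perturbed cycle matches the generator $H^3 S$ on the nose and the remainder is a short linear-algebra exercise inside the already-established ring of $\mathcal{P}_{n+1}$.
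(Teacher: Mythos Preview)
Your argument is correct but takes a different route from the paper's. You push forward via $f_n\colon H_\cdot(\mathcal{P}_n)\to H_\cdot(\mathcal{P}_{n+1})$ into the odd ring, set up undetermined coefficients $TH=\alpha HT+\beta H$, and solve by computing $f_n(TH)=A(f_n(HT))=SH^3=H^3S+H^2$. The paper instead goes the other way, using $f_{n-1}\colon H_\cdot(\mathcal{P}_{n-1})\to H_\cdot(\mathcal{P}_n)$ from the \emph{lower} odd ring: since $f_{n-1}$ is linear and sends $S\mapsto T$, $HS\mapsto HT$, $U\mapsto H$, and (via the same $A$-equivariance trick you use) $SH\mapsto TH$, the already-proved relation $HS+SH=U$ in $\mathcal{P}_{n-1}$ pushes forward directly to $HT+TH=H$. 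The paper's version is a one-liner with no undetermined coefficients and no iterated commutation of $S$ past $H^3$; it also sidesteps the transversality issue you flag for $f_n(HT)=H^3S$ (which, incidentally, is exactly the assertion $H^kT\mapsto H^{k+2}S$ recorded in Remark~\ref{rmk:heredity}). Your approach, on the other hand, matches the template of the preceding lemma on $[T,Y]$ and relies only on the ring $\mathcal{P}_{n+1}$, whose structure is fully in hand by this point.
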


\begin{proof}
Consider the map $f_{n-1}:H_{\cdot}(\mathcal{P}_{n-1})\to H_{\cdot
}(\mathcal{P}_{n})$ induced in homology by the real inclusion 
$\mathcal{P}_{n-1}\hookrightarrow\mathcal{P}_{n}$. We have
\[
f_{n-1}S=K, \qquad f_{n-1}(HS)=HT,\qquad f_{n-1}U=H.
\]
We have $f_{n-1}\circ A=A\circ f_{n-1}$ and, in view of~\eqref{eq:Aeverything}
and Lemma~\ref{lem:A}, we also have $A(HS)=SH$ and $A(HT)=TH$. We thus obtain
\[
f_{n-1}(SH)=TH.
\]
Now the \emph{linear} relation $SH+HS=U$ implies $TH+HT=H$. (This is an
instance of the \emph{Heredity principle} in~\S \ref{sec:gen}(4).)
\end{proof}

\begin{lemma}
\label{lem:T2T} Let $n\ge2$ even. The following relation holds
\[
T^{2}=T.
\]

\end{lemma}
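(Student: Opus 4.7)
The plan is first to cut $T^2$ down to a two-term ambiguity by a level argument, then to kill the two coefficients one at a time. Since $\mathrm{Crit}(T) = \pi/2$, Lemma~\ref{lem:Crit} gives $\mathrm{Crit}(T^2) \le \pi$, so $T^2$ lies in the image of $H_n(\mathcal{P}_n^{\le\pi}) \to H_n(\mathcal{P}_n)$. Consulting the tabulation of generators for $n$ even, the only degree-$n$ classes supported on levels $0$, $1$, $2$ are $U$ (level $0$) and $T$ (level $1$); level $2$ contributes generators only in degrees $n+1,\dots,3n$, nothing in degree $n$. Therefore I can write $T^2 = aU + bT$ for some $a, b \in \mathbb{Z}/2$.

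For the coefficient $a$, I plan to multiply on the right by the point class $P = H^n$. The key geometric observation is that the explicit representative $(x,\theta) \mapsto C_{x,IJx,\theta}$ of $T$ has endpoint $\exp_x(\theta J x)$, which moves along the Hopf fiber through $x$ and hence never leaves the hyperplane $\mathbb{R}P^{n-1} \subsetneq \mathbb{R}P^n$. Taking for $P$ any point $p \in \mathbb{R}P^n \setminus \mathbb{R}P^{n-1}$, the transversality hypothesis of Lemma~\ref{lem:geom*} holds vacuously and the fiber product is empty, so $TP = 0$. Associativity then gives $aP = T^2 P = T(TP) = 0$, forcing $a = 0$.

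For the coefficient $b$, I will use the already-established relation $TY + YT = Y$ twice. Expanding $T^2 Y = T(TY) = T(YT + Y)$ and regrouping via associativity collapses to
\[
T^2 Y \;=\; YT^2 + (TY + YT) \;=\; YT^2 + Y.
\]
Substituting $T^2 = bT$ converts this identity into $b(TY + YT) = Y$, and one more use of $TY + YT = Y$ yields $bY = Y$; since $Y \ne 0$ this forces $b = 1$. Combining the two computations gives $T^2 = T$.

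The main obstacle I anticipate is the vanishing $TP = 0$: it hinges on choosing the right geometric representative of $T$ (one whose endpoint map factors through the codimension-one submanifold $\mathbb{R}P^{n-1}$) and on checking that Lemma~\ref{lem:geom*} applies in the vacuous-transversality case where the fiber product is empty. Everything else is purely algebraic manipulation with associativity and the already-proved quadratic relation $TY + YT = Y$.
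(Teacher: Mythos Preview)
Your proof is correct and takes a genuinely different route from the paper's.

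The paper gives a direct geometric argument: it writes down an explicit cycle $\Upsilon$ representing $T^2$ (after perturbing one copy of $T_1$ to a transverse position and then interpolating back), describes $\Upsilon$ as a space of triples $(x_0,x_1,x_2)$ on a common Hopf fiber with $x_1$ constrained to a hyperplane, and then exhibits an explicit homotopy inside the upper hemisphere $\ell_{x_0}^+$ collapsing the concatenation $C_{x_0,x_1}\cdot C_{x_1,x_2}$ to the single half-circle $C_{x_0,x_2}$. The resulting map factors through the projection $p_{02}:\Upsilon\to T_1$, which is generically one-to-one, yielding $T^2=T$ directly.

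Your argument, by contrast, is almost entirely algebraic once the level bound $\mathrm{Crit}(T^2)\le\pi$ cuts the problem down to $T^2=aU+bT$. The computation $TP=0$ is a small geometric input (and your observation that $\mathrm{ev}_1$ on the $T_1$-cycle lands in $\mathbb{R}P^{n-1}$, so vacuous transversality in Lemma~\ref{lem:geom*} gives an empty fiber product, is exactly right). The determination $b=1$ then falls out of the already-proved relation $TY+YT=Y$ by pure associativity. Your approach is shorter and exploits the logical ordering of the paper --- the quadratic relation in $T$ and $Y$ is established just before this lemma --- whereas the paper's proof is self-contained and gives a concrete geometric picture of why $T$ is idempotent.
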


\begin{proof}
For readability we divide the proof in several steps.

\noindent\emph{Preliminaries.} \smallskip

We recall the map $C:Y_{1}\to\mathcal{P}_{n}$, $(x,v,\theta)\mapsto
C_{x,v,\theta}$. In order to simplify the notation we write in the sequel
$\gamma$ instead of $C(\gamma)$, with $\gamma=(x,v,\theta)\in\mathcal{P}_{n}$.
Whereas $\gamma$ refers to an element of $Y_{1}$ or to the path $C(\gamma)$
will be clear from the context. We also introduce the notation $\mathbb{P}(V)$
for the projective space associated to a real vector space $V$. We write
$n=2t$.

We also recall that, given an element $x\in\mathbb{R}P^{2t-1}$, we denote
$\ell^{+}_{x}$ the ``upper hemisphere'' of the complex line $\ell_{x,Jx}$,
i.e. the hemisphere towards which points the vector $IJx\in T_{x}
\mathbb{C}P^{2t-1}$. The boundary $\ell^{+}_{x}$ is the real projective line
through $x$ and tangent to $Jx$, i.e. the fiber of the Hopf fibration
$\mathbb{R}P^{2t-1}\to\mathbb{C}P^{t-1}$ obtained by factorizing the standard
Hopf fibration $\mathbb{S}^{2t-1}\overset{2:1}{\longrightarrow}\mathbb{R}
P^{2t-1}\longrightarrow\mathbb{C}^{t-1}$. We denote this Hopf fiber by
$h_{x}:=\partial\ell^{+}_{x}$, or $h_{x}^{\mathbb{R}P^{2n-1}}$ in order to
emphasize the total space of the Hopf fibration.

\smallskip

\noindent\emph{Description of $T$.} \smallskip

Recall that $T$ is the image of $S$ under the map $f_{n-1}:H_{n}
(\mathcal{P}_{n-1})\rightarrow H_{n}(\mathcal{P}_{n})$ induced by the
embedding
\[
(\mathbb{C}P^{n-1}, \mathbb{R}P^{n-1})\hookrightarrow(\mathbb{C}
P^{n},\mathbb{R}P^{n})
\]
given by $[z_{0}:\dots:z_{n-1}]\mapsto[z_{0}:\dots:z_{n-1}:0]$. Thus $T$ is
represented by
\begin{align*}
T_{1}  &  :=\{\gamma\in Y_{1}:\gamma(0)\in\mathbb{R} P^{n-1}\text{ and }
\mathrm{im}(\gamma)\subset\ell_{\gamma(0)}^{+}\}\\
&  \ =\{\gamma\in Y_{1}:\gamma(1)\in\mathbb{R} P^{n-1}\text{ and }
\mathrm{im}(\gamma)\subset\ell_{\gamma(1)}^{+}\}.
\end{align*}

For further use let us write $\mathbb{R}P^{n}=
\mathbb{P}(\mathbb{R}^{2t-2}\times\mathbb{R}^{3})$ and denote $\mathbb{P}_{1}:=\mathbb{R}P^{2t-1}=\mathbb{P}(\mathbb{R}^{2t-2}\times\mathbb{R}^{2}\times\{0\})$. Then
\begin{align*}
T_{1} = \{\gamma\in Y_{1}:\gamma(0)\in\mathbb{P}_{1},\ \gamma(1)\in
h^{\mathbb{P}_{1}}_{\gamma(0)} \mbox{ and } \mathrm{im}(\gamma)\subset
\ell_{\gamma(0)}^{+}\}.
\end{align*}

Yet another description of $T_{1}$ is
\begin{equation}
\label{eq:T1}T_{1}=\{(x_{0},x_{2})\in\mathbb{P}_{1}\times\mathbb{P}_{1}\, : \,
x_{2}\in h^{\mathbb{P}_{1}}_{x_{0}}\}.
\end{equation}
Indeed, given $(x_{0},x_{2})\in T_{1}$ there is a unique vertical half-circle
$C_{x_{0},x_{2}}$ contained in the upper half-sphere $\ell^{+}_{x_{0}}$ and
joining $x_{0}$ and $x_{2}$. The class $T$ is represented by the map
$(x_{0},x_{2})\mapsto C_{x_{0},x_{2}}$.

\smallskip

\noindent\emph{Description of $T^{2}$.} \smallskip

In order to describe a representing cycle for $T^{2}$ we first note that
$T_{1}$ is not self-transverse along the evaluation map at the endpoints. In
order to apply Lemma~\ref{lem:geom*} we use the following perturbation of
$T_{1}$. We denote $\mathbb{P}^{\prime}_{1}:=
\mathbb{P}(\mathbb{R}^{2t-2}\times\{0\}\times\mathbb{R}^{2})$ and identify $\mathbb{R}^{2t-2}
\times\{0\}\times\mathbb{R}^{2}$ with $\mathbb{C}^{t}$ and endow it with a
complex structure $J$ such that the subspace $\mathbb{R}^{2t-2}\times\{0\}$ is
$J$-invariant and inherits the same complex structure as from the above
identification $\mathbb{R}^{2t-2}\times\mathbb{R}^{2}\times\{0\}\equiv
\mathbb{C}^{t}$. We get a Hopf fibration $\mathbb{P}^{\prime}_{1}\to
\mathbb{C}P^{t-1}$ with fibers $h^{\mathbb{P}^{\prime}_{1}}_{x}$ and ``upper
hemispheres'' $\ell^{^{\prime}+}_{x}$, and we define
\[
T^{\prime}_{1}:=\{\gamma\in Y_{1}:\gamma(0)\in\mathbb{P}^{\prime}_{1},
\ \gamma(1)\in h^{\mathbb{P}^{\prime}_{1}}_{\gamma(0)} \mbox{ and }
\gamma\subset\ell_{\gamma(0)}^{^{\prime}+}\}.
\]
Since we can interpolate from $\mathbb{R}^{2}\times\{0\}$ to $\{0\}\times
\mathbb{R}^{2}$ inside $\mathbb{R}^{3}$ by a continuous family of planes
endowed with complex structures, we infer that $T^{\prime}_{1}$ is homologous
to $T_{1}$. Since the cycles $T^{\prime}_{1}$ and $T_{1}$ are transverse along
the evaluation maps at the endpoints, we obtain by Lemma~\ref{lem:geom*} that
$T^{2}$ is represented by the cycle
\begin{align*}
\Upsilon^{\prime}  &  := \{(\gamma,\delta)\in Y_{1}\times Y_{1} \, : \,
\gamma(0)\in\mathbb{P}_{1},\ \gamma(1)=\delta(0)\in 
h^{\mathbb{P}_{1}}_{\gamma(0)},\ \delta(1)\in h^{\mathbb{P}^{\prime}_{1}}_{\delta(0)},\\
&  \qquad\qquad\qquad\qquad\qquad\ \mathrm{im}(\gamma)\subset\ell_{\gamma
(0)}^{+} \mbox{ and } \mathrm{im}(\delta)\subset
\ell_{\delta(0)}^{^{\prime}+}\}.
\end{align*}
Note that $\mathbb{P}_{1}\cap\mathbb{P}^{\prime}_{1}=\mathbb{P}(\mathbb{R}
^{2t-2}\times\{0\}\times\mathbb{R}\times\{0\})=:\mathbb{P}_{2}$. Backwards
interpolation from $\mathbb{P}^{\prime}_{1}$ to $\mathbb{P}_{1}$ provides the
homologous cycle
\begin{align*}
\Upsilon &  := \!\!\{(\gamma,\delta)\in Y_{1}\times Y_{1}\, : \, \gamma
(0)\in\mathbb{P}_{1},\ \gamma(1)=\delta(0)\in h^{\mathbb{P}_{1}}_{\gamma(0)},
\ \delta(1)\in h^{\mathbb{P}_{1}}_{\gamma(0)}, \ \gamma(1)\in\mathbb{P}_{2},\\
&  \qquad\qquad\qquad\qquad\qquad\ \mathrm{im}(\gamma),\ \mathrm{im}(\delta)
\subset\ell_{\gamma(0)}^{+} \}.
\end{align*}

Yet another description of the cycle $\Upsilon$ that represents $T^{2}$ is the
following: it consists of triples $(x_{0},x_{1},x_{2})\in\mathbb{P}_{1}
\times\mathbb{P}_{1} \times\mathbb{P}_{1}$ such that $x_{0},x_{1},x_{2}$
belong to the same Hopf fiber $h^{\mathbb{P}_{1}}_{x_{0}}$ and $x_{1}
\in\mathbb{P}_{2}$. Indeed, such a triple determines uniquely a concatenated
path from the vertical half-circles connecting $x_{0}$ to $x_{1}$ and $x_{1}$
to $x_{2}$ inside $\ell^{+}_{x_{0}}$. This description also shows that
$\Upsilon$ is a manifold: the space of triples $(x_{0},x_{1},x_{2})$ which
belong to the same Hopf fiber is the double pull-back of the projectivized
Hopf fibration via the projection map. Imposing that $x_{1}$ belongs to
$\mathbb{P}_{2}$ amounts to taking the preimage of $\mathbb{P}_{2}$ via the
second projection, which is a submersion.

\smallskip

\noindent\emph{Proof that $T^{2}=T$.} \smallskip

The cycle $\Upsilon$ represents $T^{2}$ via the concatenation 
$C_{x_{0},x_{1}}\cdot C_{x_{1},x_{2}}$ of the two vertical half-circles $C_{x_{0},x_{1}}$ and
$C_{x_{1},x_{2}}$ that connect $x_{0}$ to $x_{1}$, respectively $x_{1}$ to
$x_{2}$ inside $\ell_{x_{0}}^{+}$. This concatenation can be homotoped to
$C_{x_{0},x_{2}}$ inside $\ell_{x_{0}}^{+}$ by concatenating the unique arc of
vertical half-circle running from $x_{0}$ to $C_{x_{1},x_{2}}(t)$ and the arc
of half-circle $C_{x_{1},x_{2}}|_{[t,1]}$, for $t\in[0,1]$. (See
Figure~\ref{fig:homotopyT2}. Note that this procedure is reminiscent of the
proof of Lemma~\ref{lem:S2=0}.)

\begin{figure}[ptb]
\begin{center}
\input{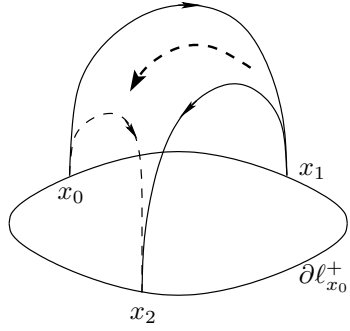}
\end{center}
\caption{Deforming $\Upsilon$ to $T_{1}$.}
\label{fig:homotopyT2}
\end{figure}

Thus $T^{2}$ is represented by the map
\[
C_{x_{0},x_{2}}:\Upsilon\to\mathcal{P}_{n}, \qquad(x_{0},x_{1},x_{2}
)\longmapsto C_{x_{0},x_{2}}.
\]

Adopting the description~\eqref{eq:T1} for the cycle $T_{1}$ that represents
the class $T$, we obtain a factorization
\begin{equation}
\label{eq:UpsilonT1}\xymatrix{ \Upsilon \ar[d]_{p_{02}} \ar[rr]^-{C_{x_0,x_2}} & & \mathcal{P}_n \\ T_1 \ar[urr]_-{C_{x_0,x_2}} }
\end{equation}
with $p_{02}:\Upsilon\to T_{1}$, $(x_{0},x_{1},x_{2})\longmapsto(x_{0},x_{2}
)$. The key point is that the map $p_{02}$ is smooth and generically
one-to-one. Indeed, the subspace $\mathbb{P}_{2}$ (on which the point $x_{1}$
is constrained to lie) contains entirely the Hopf fibers which lie in
$\mathbb{P}(\mathbb{R}^{2t-2}\times\{0\})$, but intersects in exactly
\emph{one} point the Hopf fibers which are not contained in $\mathbb{P}
(\mathbb{R}^{2t-2}\times\{0\})$.

Since the horizontal arrow in~\eqref{eq:UpsilonT1} represents $T^{2}$ and the
diagonal arrow represents $T$, we infer that $T^{2}=T$.
\end{proof}

\emph{This proves Theorem~\ref{thm:PCSproduct} in the case $n$ even, which was
the last remaining case.}

\begin{remark}
The reader is invited to compare the proofs of the identities $S^{2}=0$ and
$T^{2}=T$, respectively $HS+SH=U$ and $TH+HT=H$ in view of the discussion on
heredity in~\S \ref{sec:gen}(4).
\end{remark}


\bibliographystyle{abbrv}
\bibliography{../BIG}

\end{document}